\def \cC{\mathcal{C}}
\def \cD{\mathcal{D}}
\def \cS{\mathcal{S}}
\def \cF{\mathcal{F}}
\def \cG{\mathcal{G}}
\def \cI{\mathcal{I}}
\def \cJ{\mathcal{J}}
\def \cO{\mathcal{O}}
\def \cR{\mathcal R}
\def \P{\mathsf P}
\def \E{\mathsf E}
\def \N{\mathbb{N}}
\def \R{\mathbb{R}}
\def \XX{\widetilde{X}}
\def \ud{\mathrm{d}}
\def \e{\mathrm{e}}
\newcommand{\eps}{\varepsilon}
\newcommand{\ind}{\mathbf{1}}
\newtheorem{theorem}{Theorem}[section]
\newtheorem{lemma}[theorem]{Lemma}
\newtheorem{corollary}[theorem]{Corollary}
\newtheorem{proposition}[theorem]{Proposition}
\newtheorem{example}{Example}[section]
\newtheorem{remark}[theorem]{Remark}
\newtheorem{assumption}[theorem]{Assumption}
\title[Optimal stopping problems on finite-time horizon]{Stopping spikes, continuation bays \\ and other features of optimal stopping \\ with finite-time horizon}
\author[T. De Angelis]{Tiziano De Angelis}
\thanks{{\em Mathematics Subject Classification 2020}: 60G40, 35R35, 60J60.}
\thanks{{\em Acknowledgements}: I am grateful to two anonymous referees whose insightful comments greatly improved the overall quality of the paper.}
\keywords{optimal stopping, free boundary problems, continuous boundary, smooth-fit, local time, one-dimensional diffusions.}
\address{T.~De Angelis: School of Management and Economics (Dept.\ ESOMAS) University of Turin, C.so Unione Sovietica 218bis, 10134, Torino (Italy); Collegio Carlo Alberto, Piazza Arbarello 8, 10122, Torino, (Italy).}
\email{\href{mailto:tiziano.deangelis@unito.it}{tiziano.deangelis@unito.it}}
\date{\today}
\numberwithin{equation}{section}
\begin{document}
\begin{abstract}
We consider optimal stopping problems with finite-time horizon and state-dependent discounting. The underlying process is a one-dimensional linear diffusion and the gain function is time-homogeneous and difference of two convex functions. Under mild technical assumptions with local nature we prove fine regularity properties of the optimal stopping boundary including its continuity and strict monotonicity. The latter was never proven with probabilistic arguments. We also show that atoms in the signed measure associated with the second order spatial derivative of the gain function induce geometric properties of the continuation/stopping set that cannot be observed with smoother gain functions (we call them {\em continuation bays} and {\em stopping spikes}). The value function is continuously differentiable in time without any requirement on the smoothness of the gain function.
\end{abstract}

\maketitle

\section{Introduction}

In this paper we analyse in depth some fine properties of optimal stopping problems with finite-time horizon and state-dependent discounting, when the underlying process is a time-homogeneous one-dimensional diffusion and the stopping payoff is also time-homogeneous. Under very mild (local) regularity conditions on the stopping payoff and the diffusion process we provide results concerning the smoothness of the value function (in time) and the geometry of the optimal stopping boundary. 

Denoting by $g$ the stopping payoff (or gain function) and by $X$ the underlying process, we show that when $g$ is just the difference of two convex functions the value function of the problem is continuously differentiable in time. Moreover, the geometry of the stopping set depends in a peculiar way on the interplay between the second-order weak derivative $g''(\ud x)$ (interpreted as a signed measure) and the local-time of $X$, via the so-called {\em Lagrange} formulation of the stopping problem, obtained as an application of It\^o-Tanaka-Meyer formula. Among other things we are able to identify sufficient conditions for the formation of {\em continuation bays} and {\em stopping spikes}, neither of which would occur in the case of a smoother gain function. Both phenomena appear as a result of the presence of atoms in the measure $g''(\ud x)$: continuation bays are associated to positive atoms and stopping spikes are associated to negative ones. It is important to recognise that these features are far from being artificial and indeed occur in very natural optimal stopping problems, as illustrated in Examples \ref{ex:bays} and \ref{ex:spikes} of Section \ref{sec:bays}, including the celebrated American put/call option problem.

One key result of the paper concerns the strict monotonicity of time-dependent optimal stopping boundaries (Corollary \ref{cor:non-flat}). In the literature we can find a wealth of numerical illustrations of optimal boundaries $t\mapsto b(t)$ that exhibit a smooth profile and strict (piecewise) monotonic behaviour (see, e.g., numerous examples in \cite{PeSh}). While the probabilistic study of continuity of the map $t\mapsto b(t)$ has a relatively long history (classical tricks are presented in \cite{PeSh} and more recent results can be found in \cite{DeA15} and \cite{Pe19}) we are not aware of any rigorous probabilistic proof of the strict monotonicity. This question is addressed in Section \ref{sec:cont} of the paper where we give simple sufficient conditions for the strict monotonicity of the optimal boundary and provide a proof based on probabilistic methods and reflecting diffusions. The result complements analogous classical results from the PDE literature, which normally require more stringent conditions on the problem data (traditional references are \cite{Cannon} and \cite{Fr75}, among many others). As an application we show that the optimal exercise boundary of the American put in the classical Black and Scholes model is indeed strictly increasing as a function of time (Example \ref{ex:non-flat-b}). To the best of our knowledge this is the only existing probabilistic proof that does not require any assumption on the smoothness of the boundary itself (PDE methods were used for example in \cite{CC07} and \cite{E04} and we are also aware of a probabilistic proof in \cite{Villeneuve}, which however requires $C^1$-regularity of the optimal boundary).

An important feature of our work, that sets it apart from the majority of papers in the field, is that we conduct a {\em local} study of the problem. That is, we provide our results under assumptions concerning the local behaviour of the underlying diffusion and of the gain function, rather than their global behaviour. This allows wide applicability of our methods to specific problems and extensions beyond our set-up are possible on a case by case basis. As part of our methodology here we study the boundary of the stopping set as a function of the spatial variable, i.e., $x\mapsto c(x)$, rather than as a function of time $t\mapsto b(t)$. This choice is natural, due to the time-homogeneity of both the gain function and the diffusion process, but it is not very common in the literature. Such parametrisation turns out to be very fruitful as we are able to perform a detailed study which includes continuity and strict (piecewise) monotonicity of the map $x\mapsto c(x)$, without requiring any convexity or monotonicity of the gain function (nor any structural assumption on the state-dependent discount rate). Then $x\mapsto c(x)$ can be inverted locally to obtain a local representation of the optimal boundary as a function of time, $t\mapsto b(t)$, which is continuous and strictly monotonic. 

There is a broad class of problems that falls directly within the framework of our paper. Along with the already mentioned American put problem (see, e.g., \cite{J91}), we find numerous other applications from American option pricing (e.g., chooser options \cite{CZ05} and strangle options \cite{DE09}), options embedded in insurance policies (see, e.g., \cite{CDeAS20} and references therein) and technical analysis (see \cite{DeAPe16}). An early contribution to optimal stopping theory fitting in our set-up is \cite{Kotlow}, which proposes a constructive procedure to identify the optimal boundary, based on PDE methods, under the requirement that the gain function be three times continuously differentiable. Stopping problems related to R\"ost's solution of the Skorokhod embedding problem are also covered by the present paper: \cite{MC91} addresses the question from a PDE point of view, \cite{DeA18} from a probabilistic one and \cite{DeAK17} obtains the optimal boundaries numerically (\cite[Rem.\ 3.5]{CW13} and \cite[Rem.\ 17]{CP15} also contain valuable insight on the connection between R\"ost embedding and optimal stopping). Finally, our set-up covers special cases of several theoretical papers on optimal stopping and free boundary problems. Just to mention some early contributions from both the PDE and the probabilistic strand of the literature, we refer to \cite{V99} and \cite{LS09} which consider time-homogeneous gain function and underlying (multidimensional) arithmetic/geometric Brownian motions, \cite{JLL} which also allows for time-inhomogeneous diffusions and, finally, \cite{JL92} which includes both time-inhomogeneous gain function and underlying diffusion.  

It is also worth drawing a parallel with the two closely related papers \cite{LZ13} and \cite{L09} (notice that \cite{L09} mentions \cite{LZ13} as a preprint). In \cite{LZ13}, Lamberton and Zervos study infinite-time horizon optimal stopping problems for one-dimensional linear diffusions when the gain function is time-homogeneous and difference of two convex functions. That paper deals mainly with the variational characterisation of the value function but it also addresses optimal boundaries in specific examples. Differently to the present paper, in \cite{LZ13} the state space is one-dimensional and optimal boundaries are points on the real line. We can think of our paper as an analogue of \cite{LZ13} but in the finite-time horizon setting. The methods used in \cite{LZ13} are those from the theory of one-dimensional diffusions and ordinary differential equations, which do not apply here because the free boundary problems associated to our optimal stopping problems are of parabolic type. The analysis from \cite{LZ13} is then extended by Lamberton in \cite{L09} to the finite-time horizon framework. The underlying process is a one-dimensional linear diffusion and the gain function is time-homogeneous but it is only bounded and Borel measurable as function of the state-process. It is shown in \cite{L09} that the value function of the optimal stopping problem is continuous and it is the unique (bounded continuous) solution of a variational problem understood in the sense of distributions. Our work complements \cite{L09} by focussing on the study of the geometry of the optimal stopping set and on the regularity of its boundary.

The paper is organised as follows. In Section \ref{sec:setup} we formulate the problem and recall useful facts on optimal stopping and one-dimensional linear diffusions. Then we show existence of an optimal boundary $x\mapsto c(x)$ and prove its regularity in the sense of diffusions. Section \ref{sec:C1} is devoted to proving that the time derivative of the value function is continuous in the whole space. Fine geometric properties of the continuation/stopping set are addressed in Section \ref{sec:bays} whereas the continuity of the map $x\mapsto c(x)$ (or equivalently the strict monotonicity of its inverse $t\mapsto b(t)$) is studied in Section \ref{sec:cont}. The paper is completed by a technical appendix.

\section{Setting}\label{sec:setup}

\subsection{The underlying process and the gain function}\label{sec:set1}
Let us consider a complete probability space $(\Omega,\cF,\P)$ equipped with a Brownian motion $B:=(B_t)_{t\ge 0}$ and its filtration $\mathbb{F}:=(\cF_t)_{t\ge 0}$, which is augmented with $\P$-null sets. Let $X:=(X_t)_{t\ge 0}$ be a linear diffusion on an open (possibly unbounded) interval $\cI=(\underline x,\overline x)\subseteq\R$. 
We assume that $X$ be determined as the unique strong solution of the stochastic differential equation (SDE) 
\begin{align}\label{eq:X}
\ud X_t=\sigma(X_t)\ud B_t,\qquad X_0=x,
\end{align}
for a suitable $\sigma:\cI\to [0,\infty)$. We also require that $X$ be a Feller process, hence strong Markov thanks to continuity of paths. We further assume that the diffusion is infinitely-lived, in the sense that the endpoints of the interval $\cI$ are natural in the terminology of \cite[Chapter\ II]{BS} (in particular this means that $\underline x$ and $\overline x$ are not attainable by the process in finite time and the process cannot be started from those points). 

To summarise we impose:
\begin{assumption}\label{ass:SDE}
The process $X$ is strong Markov and it is the unique strong solution of \eqref{eq:X}. The endpoints $\underline x$ and $\overline x$ of $\cI$ are natural. 
\end{assumption}
\noindent To keep the exposition simple, we also make the next mild assumption on the diffusion coefficient.   
\begin{assumption}\label{ass:sigma1}
We have $\sigma\in C(\cI)$ and strictly positive in $\cI$. In particular, for any compact $K\subset \cI$ there exist constants $0<\underline\sigma_K\le \overline\sigma_K<\infty$ such that
\[
\underline\sigma_K\le \sigma(x)\le\overline{\sigma}_K,\quad\text{for all $x\in K$.}
\]
\end{assumption}
Assumptions \ref{ass:SDE} and \ref{ass:sigma1} are enforced throughout the paper.
Sometimes we will use the notation $X^x$ to keep track of the flow property of the process $X$ or, alternatively, we will denote $\P_x(\,\cdot\,):=\P(\,\cdot\,|X_0=x)$.

Fix a time $T\in(0,\infty)$ and continuous functions $r:\cI\to[0,\infty)$ and $g:\cI\to\R$ such that, for any compact $K\subset \cI$, 
\begin{align}\label{eq:integr-g}
\sup_{x\in K}\E_x\left[\sup_{0\le t\le T}\e^{-\int_0^t r(X_s)\ud s}\left|g(X_t)\right|\right]<+\infty.
\end{align}
The problem we are interested in is the following finite-time horizon optimal stopping problem:
\begin{align}\label{eq:value}
v(t,x):=\sup_{0\le\tau\le T-t}\E_x\left[\e^{-\int_0^\tau r(X_s)\ud s}g(X_\tau)\right],\qquad (t,x)\in [0,T]\times\cI,
\end{align}
where the supremum is taken over stopping times of the filtration $\mathbb{F}$. 

Throughout the paper the minimal regularity assumption on the function $g$ is that 
\[
\textit{$g$ can be written as the difference of two convex functions}.
\]
Then, its first (weak) derivative $g'$ exists as a function of bounded variation (which can be taken to be either right or left continuous) and its second (weak) derivative $g''$ exists as a signed measure on $\cI$. For the sake of concreteness, and following \cite[Chapter VI.1]{RY}, we take $g'$ as the {\em left-derivative} of $g$. So $g'$ is left-continuous with right-limits on $\cI$. 
Finally, the measure $g''(\ud x)$ is defined in the usual way via
\[
g''\big([a,b)\big)=g'(b)-g'(a),\quad \underline x\le a<b\le \overline x.
\]

Let us now introduce 
\begin{align}\label{eq:ell}
\ell_t^z=\int_0^t \e^{-\int_0^s r(X_u)\ud u}\ud L^z_s\quad\text{and}\quad \mu(\ud z)=g''(\ud z)-2\sigma^{-2}(z)r(z)g(z)\ud z.
\end{align}
Here $L^z$ denotes the local time of the process $X$ at a point $z\in\cI$, which is defined as
\begin{align}\label{eq:loctime}
L^z_t:=\lim_{\eps \to 0}\frac{1}{2\eps}\int_0^t\ind_{\{X_s\in(z-\eps,z+\eps)\}}\ud \langle X\rangle_s,\quad \P-a.s.,
\end{align}
where $\langle X\rangle_t=\int_0^t\sigma^2(X_s)\ud s$ is the quadratic variation of $X$. For the analysis that follows in the next sections, it is convenient to decompose the signed measure $\mu(\ud x)$ into its positive and negative part (see also Section \ref{sec:bays}):
\[
\mu(\ud x)=\mu^+(\ud x)-\mu^-(\ud x).
\]
Throughout the paper we denote by $\overline A$ the closure of a Borel set $A\subset \R$.

In order to derive the next formula \eqref{eq:u} we need a mild integrability condition:
\begin{assumption}\label{ass:ell}
For every $x\in\cI$
\[
\text{either}\quad \E_x\Big[\int_\cI\ell^z_T\mu^+(\ud z)\Big]<\infty\quad\text{or}\quad \E_x\Big[\int_\cI\ell^z_T\mu^-(\ud z)\Big]<\infty \quad\text{or both}.
\]
\end{assumption}
Thanks to the regularity of $g$ and Assumption \ref{ass:ell} we can use It\^o-Tanaka-Meyer formula (see \cite[Thm.\ VI.1.5]{RY}) to write the problem in the so-called {\em Lagrange} formulation:
\begin{align}\label{eq:u}
u(t,x):=v(t,x)-g(x)=\sup_{0\le\tau\le T-t}\E_x\left[\tfrac{1}{2}\int_{\cI}\ell^z_\tau \mu(\ud z)\right].
\end{align}
Since the endpoints of $\cI$ are natural the expression above follows from \cite[Lemma 3.2]{LZ13}. 
For completeness we provide a full proof in the Appendix.

From standard theory on one-dimensional diffusions it is known that $X$ admits a transition density with respect to the speed measure which is continuous in all its variables (see \cite[Chapter II.1.4]{BS} and \cite[p.\ 149]{IMcK}). In other words, there exists a continuous function
\begin{align}\label{eq:p}
\hat p:(0,\infty)\times\cI\times\cI\to\R_+
\end{align}
such that $\P_x(X_t\in A)=\int_{A}\hat p(t,x,y)m(\ud y)$ for all Borel sets $A\subseteq \cI$ and any $t>0$, where $m(\ud y)$ denotes the speed measure of $X$. In our case, since $\sigma(\,\cdot\,)>0$ on $\cI$, we have 
\begin{align}\label{eq:m}
m(\ud y)= m'(y)\ud y = \frac{2\ud y}{\sigma^{2}(y)S'(y)},\qquad y\in\cI,
\end{align}
where $S'(\,\cdot\,)$ is the derivative of the scale function of the process (since $X$ is in natural scale it simply holds $S'(y)=1$ for $y\in\cI$).
Then $X$ admits a transition density with respect to the Lebesgue measure. 

Before closing this section we make a couple of observations concerning the generality of our model.
\begin{remark}[{\bf Gain function and discount rate}]\label{rem:gain}
The requirement that the discount rate $r$ be non-negative can be easily relaxed to $r:\cI\to[-r_0,\infty)$ for some constant $r_0\in[0,\infty)$. Further relaxations are also possible but at the cost of additional integrability requirements, and we leave such extensions aside for the sake of clarity of exposition.

If $g\in C^2(\cI)$ then the value function $u$ of problem \eqref{eq:u} takes the more familiar form
\begin{align*}
\sup_{0\le\tau\le T-t}\E_x\left[\int^\tau_{0}\e^{-\int_0^s r(X_u)\ud u} \Big(\tfrac{1}{2}\sigma^2(X_s)g''(X_s)-r(X_s)g(X_s)\Big)\ud s\right].
\end{align*}
If we add a running profit function $h:\cI\to\R$, the original problem in \eqref{eq:value} reads
\[
\sup_{0\le \tau \le T}\E_x\left[\int_0^\tau \e^{-\int_0^t r(X_s)\ud s}h(X_t)\ud t+\e^{-\int_0^\tau r(X_s)\ud s}g(X_\tau)\right].
\]
Then the problem in \eqref{eq:u} becomes 
\begin{align*}
\sup_{0\le\tau\le T-t}\E_x\left[\tfrac{1}{2}\int_{\cI}\ell^z_\tau \nu(\ud z)\right],
\end{align*}
with 
\begin{align*}
\nu(\ud z)=g''(\ud z)+2\sigma^{-2}(z)(h(z)-r(z)g(z))\ud z.
\end{align*}
Since all the key results of the paper are based on properties of the measure $\mu(\ud x)$, they immediately carry over to problems in which $\mu(\ud x)$ is replaced by $\nu(\ud x)$ defined above.
\end{remark}

\begin{remark}[{\bf The underlying process}]\label{rem:process}
It is important to notice that there is (almost) no loss of generality in assuming zero drift in the dynamics \eqref{eq:X} of the process $X$. Indeed, let us consider instead a strong solution $Y$ of the SDE 
\begin{align}\label{eq:Y}
\ud Y_t=\alpha(Y_t)\ud t+\beta(Y_t)\ud B_t,\qquad Y_0=y,
\end{align}
on some interval $\hat \cI\subseteq\R$, with $\alpha$ and $\beta$ drift and diffusion coefficients that guarantee existence and uniqueness of the strong solution. Assume the endpoints of the interval $\hat \cI$ are natural. Let us then consider the stopping problem
\[
\sup_{0\le\tau\le T}\E_x\left[\e^{-\int_0^\tau \hat r(Y_t)\ud t}\hat g(Y_\tau)\right],
\]
with Borel measurable functions $\hat g:\hat \cI\to\R$ and $\hat r:\hat \cI\to [0,\infty)$. Then we can reduce to the setting of \eqref{eq:X} and \eqref{eq:value} by a simple change of scale. That is, letting $\hat S$ be the scale function of $Y$, we have that $X:=\hat S(Y)$ solves \eqref{eq:X} with $\sigma(x)=(\hat S'\circ \hat S^{-1})(x)(\beta\circ \hat S^{-1})(x)$, and the stopping problem takes the form of \eqref{eq:value} with $g(x)=(\hat g\circ \hat S^{-1})(x)$ and $r(x)=(\hat r\circ \hat S^{-1})(x)$. Obviously here $\cI=\hat S (\hat \cI)$.

This approach can be extended even further to consider SDEs with generalised drift (in the sense of, e.g., \cite{ZR19}), again by adopting the change of coordinates via the scale function. However, we insist on the requirement that the SDE admits a unique strong solution because we use pathwise uniqueness in our arguments below (recall that weak existence and pathwise uniqueness imply strong existence; see, e.g., \cite[Corollary 5.3.23]{KS}).  
\end{remark}

\subsection{Generalities on the value function and existence of a boundary}
Since $X$ admits a continuous transition density with respect to its speed measure, then the two-dimensional process $(t,X)$ enjoys Feller property. The latter, combined with continuity of $g$ and with \eqref{eq:integr-g}, is known to be sufficient to obtain that
\[
(t,x)\mapsto v(t,x)\quad\text{is lower semi-continuous}
\]
thanks to standard results from \cite[Lemma 3, Sec.\ 3.2.3 and Lemma 4, Sec.\ 3.2.4]{Sh}. 
As a consequence of \eqref{eq:integr-g} we also have
\begin{align}\label{eq:bound-v}
\sup_{(t,x)\in [0,T]\times K}|v(t,x)|<\infty, \quad\text{for any compact $K\subset\cI$}.
\end{align}

Further, we know from \cite[Thm.\ 1, Sec.\ 3.3.1 and Thm.\ 3, Sec.\ 3.3.3]{Sh} that  
\[
\tau_*=\inf\{t\in[0,T]: (t,X_t)\in\cS\}
\]
is the minimal optimal stopping time, where
\[
\cS=\{(t,x)\in[0,T]\times\cI\,:\,v(t,x)=g(x)\}
\]
is the so-called {\em stopping set} (notice in particular that $\{T\}\times\cI\subseteq\cS$ by definition). We will sometimes use the notation $\tau^{t,x}_*$ to emphasise the dependence of this stopping time on the initial position of the time-space process; that is
\[
\tau^{t,x}_*=\inf\{s\in[0,T-t]: (t+s,X^x_s)\in\cS\}.
\]
Letting also the {\em continuation set} be denoted by
\[
\cC=\{(t,x)\in[0,T]\times\cI\,:\,v(t,x)>g(x)\},
\]
we immediately see that $\cC$ is open and $\cS$ is closed (relative to $[0,T]\times\cI$) thanks to lower semi-continuity of $u=v-g$. Given an interval $\cJ\subseteq\cI$, it will be sometimes convenient to work with sets of the form 
\[
\cC_{\cJ}=\cC\cap\big([0,T)\times\mathrm{int}(\cJ)\big)
\]
and the associated boundary $\partial\cC_{\cJ}=\partial\cC\cap\big([0,T)\times\mathrm{int}(\cJ)\big)$. We should always understand $\partial \cC$ as $\partial \cC_{\cI}$.

Finally, it follows from standard theory (\cite[Sec.\ 3.4]{Sh}) that for $s\in[0,T-t]$ 
\begin{align}\label{eq:martv}
s\mapsto \e^{-\int_0^s r(X_u)\ud u}v(t+s,X_{s})\quad\text{is a super-martingale}
\end{align}
and, equivalently, from \eqref{eq:u}
\begin{align}\label{eq:mart}
s\mapsto \e^{-\int_0^s r(X_u)\ud u} u(t+s,X_{s})+\tfrac{1}{2}\int_\cI \ell^z_{s}\mu(\ud z)\quad\text{is a super-martingale}.
\end{align}
If we replace $s$ with $s\wedge\tau_*$ the two stopped processes above are martingales on $[0,T-t]$.

In our setting, the process $X$ is time-homogeneous and the gain function $g$ is independent of time. Therefore, it is immediate to verify that for any $(t,x)\in(0,T]\times\cI$ and $h\in(0, t)$
\begin{align}\label{eq:v-mon}
v(t,x)=&\sup_{0\le\tau\le T-t}\E_x\left[\e^{-\int_0^\tau r(X_s)\ud s}g(X_\tau)\right]\\
\le& \sup_{0\le\tau\le T-t+h}\E_x\left[\e^{-\int_0^\tau r(X_s)\ud s}g(X_\tau)\right]=v(t-h,x),\notag
\end{align}
so that 
\begin{align}\label{eq:monot-u}
t\mapsto u(t,x)=v(t,x)-g(x)\quad\text{is non-increasing}.
\end{align}
Such monotonicity of the value function identifies some sort of `privileged' direction in the state space, in the sense of the following simple statement
\[
(t,x)\in\cS\implies[t,T]\times\{x\}\subseteq\cS.
\]
Then, we can uniquely determine the boundary of the continuation set by defining
\[
c(x):=\sup\{t\in[0,T): v(t,x)>g(x)\}\wedge T\quad\text{with $\sup\varnothing=0$}.
\]

Since $\cS$ is a closed set we have that for any $x\in\cI$ and any sequence $x_n\to x$, as $n\to \infty$, it holds
\[
\cS\ni\liminf_{n\to \infty}(c(x_n),x_n)=(\liminf_{n\to\infty}c(x_n),x),
\]
hence
\[
\liminf_{n\to \infty}c(x_n)\ge c(x).
\]

In conclusion we can summarise the above discussion in the next proposition.
\begin{proposition}\label{prop:boundary1}
The stopping set can be expressed as
\[
\cS=\{(t,x)\in[0,T]\times\cI\,:\,t\ge c(x)\},
\] 
where $c:\cI\to[0,T]$ is lower semi-continuous.
\end{proposition}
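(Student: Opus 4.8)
The plan is to establish the set identity first and then read off lower semicontinuity of $c$ from the fact that $\cS$ is closed. Throughout I would use only: the monotonicity \eqref{eq:monot-u}, the elementary bound $u=v-g\ge 0$ (take $\tau=0$ in \eqref{eq:u}), the ``privileged direction'' implication $(t,x)\in\cS\Rightarrow[t,T]\times\{x\}\subseteq\cS$ recorded above, and the topology of $\cC$ and $\cS$ (namely $\cC$ open, $\cS$ closed relative to $[0,T]\times\cI$).

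First I would check the inclusion $\cS\subseteq\{(t,x)\,:\,t\ge c(x)\}$. If $(t,x)\in\cS$, the privileged-direction implication gives $v(s,x)=g(x)$ for all $s\in[t,T]$, so the set $\{s\in[0,T)\,:\,v(s,x)>g(x)\}$ is contained in $[0,t)$; passing to the supremum and intersecting with $[0,T]$ yields $c(x)\le t$. The same computation, together with $\sup\varnothing=0$, simultaneously shows that $c$ is $[0,T]$-valued.

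Next, the reverse inclusion $\{(t,x)\,:\,t\ge c(x)\}\subseteq\cS$. Fix $(t,x)$ with $t\ge c(x)$; the goal is $(t,x)\in\cS$. The case $t=T$ is immediate because $\{T\}\times\cI\subseteq\cS$. If $t<T$ then $c(x)\le t<T$, so $c(x)$ coincides with $\sup\{s\in[0,T)\,:\,v(s,x)>g(x)\}$. For every $s\in(c(x),T)$ the point $s$ strictly exceeds this supremum, hence $v(s,x)\le g(x)$; combined with $v\ge g$ this gives $(s,x)\in\cS$. Letting $s\downarrow c(x)$ and using closedness of $\cS$ I obtain $(c(x),x)\in\cS$, and then the privileged-direction implication propagates membership to all of $[c(x),T]\times\{x\}\ni(t,x)$. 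Equivalently, the only delicate point — that the boundary value $t=c(x)$ itself lies in $\cS$ — can be settled via openness of $\cC$: were $(c(x),x)\in\cC$, there would be $\delta>0$ with $c(x)+\delta<T$ and $(c(x)+\delta,x)\in\cC$, contradicting the definition of $c(x)$ as a supremum.

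Finally, for lower semicontinuity of $c$: given $x_n\to x$ in $\cI$, the identity just proved gives $(c(x_n),x_n)\in\cS$; since each $c(x_n)$ lies in the compact set $[0,T]$, pass to a subsequence along which $c(x_n)\to\liminf_n c(x_n)=:\ell$. Closedness of $\cS$ yields $(\ell,x)\in\cS$, hence $\ell\ge c(x)$ again by the identity, i.e. $\liminf_n c(x_n)\ge c(x)$. I do not anticipate a genuine obstacle in any of this; the one step that requires a little care is exactly the boundary case $t=c(x)$ treated above, which is why I would phrase it with the approximation-plus-closedness argument (or, alternatively, with the openness of $\cC$).
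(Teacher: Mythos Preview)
Your proof is correct and follows essentially the same approach as the paper: the paper derives the set identity and lower semicontinuity directly from the monotonicity of $t\mapsto u(t,x)$, the definition of $c$, and closedness of $\cS$, exactly as you do. You are simply more explicit than the paper about the boundary case $t=c(x)$ and about extracting a subsequence in the lower-semicontinuity argument, but these are elaborations of the same reasoning rather than a different route.
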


\begin{remark}
It is worth noticing that, in the literature on finite-time horizon optimal stopping problems on a one-dimensional diffusion, it is customary to describe the stopping set in terms of a boundary which is time-dependent, rather than space-dependent. However, proving existence of such boundaries requires to show that, e.g., the map $x\mapsto u(t,x)$ is monotonic, or at least convex. This type of argument will fail in general, if $g$ is just the difference of two convex functions and in the presence of a state-dependent discount rate.

Instead, the existence of the boundary $x\mapsto c(x)$ is an immediate consequence of the monotonicity in time of the value function, which holds under even wider generality than ours. Indeed, a quick look at the argument in \eqref{eq:v-mon} reveals that monotonicity of $t\mapsto v(t,x)$ is solely a consequence of the reduction of the set of admissible stopping times and has nothing to do with either the process $X$, the discount rate or the gain function (provided the latter three are time-homogeneous).   
\end{remark}

For some of the results that follow, it is convenient to work with a continuous value function $v$. Continuity is normally easy to prove in specific examples but also general results exist. For example, if $g$ is bounded and continuous and $r(x)\equiv r\ge 0$ \cite[Thm.~4.3]{PS11} guarantees continuity of $v$ when $X$ is just a Feller process on a locally compact and separable space (not necessarily a solution of an SDE). Alternatively, for $X$ a non-exploding linear diffusion on an interval and $r(x)\ge0$ locally bounded and measurable, \cite{L09} proves continuity of $v$ when $g$ is just bounded and measurable. Rather than giving another proof of the continuity of the value function, when necessary we will invoke the next assumption.
\begin{assumption}\label{ass:cont-v}
We have $v\in C([0,T]\times\cI)$.
\end{assumption}
For $\alpha\in(0,1)$ we denote by $C^\alpha_{\ell oc}(\cI)$ the class of locally $\alpha$-H\"older continuous functions on $\cI$. 
Continuity of the value function, along with the martingale property, Assumption \ref{ass:sigma1} and a standard PDE argument gives the next corollary (see, e.g., the proof of \cite[Prop.~7.7, Ch.~2]{KS2}).
\begin{corollary}\label{cor:PDE}
Under Assumption \ref{ass:cont-v}, if $r,\sigma\in C^\alpha_{\ell oc}(\cI)$ for some $\alpha\in(0,1)$ we have $v\in C^{1,2}(\cC)$ and it solves
\[
\partial_tv(t,x)+\tfrac{1}{2}\sigma^2(x)\partial_{xx}v(t,x)=r(x)v(t,x),\qquad (t,x)\in\cC,
\]
with boundary condition $v=g$ on $\partial\cC$.
\end{corollary}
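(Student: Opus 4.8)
The plan is the classical ``interior regularity via an auxiliary Dirichlet problem'' scheme (the same as in \cite[Prop.~7.7, Ch.~2]{KS2}): localise to a relatively compact space-time cylinder, solve a linear parabolic boundary-value problem there by Schauder theory, and identify its unique solution with $v$ through a verification argument based on the martingale property \eqref{eq:martv}. Concretely, fix $(t_0,x_0)\in\cC$. Since $\cC$ is relatively open in $[0,T]\times\cI$ and $\{T\}\times\cI\subseteq\cS$, there is an open cylinder $R=(t_1,t_2)\times(x_1,x_2)\ni(t_0,x_0)$ with $\overline R\subset\cC$, $t_2<T$ and $[x_1,x_2]\subset\cI$. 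On the compact set $\overline R$ we have $0<\underline\sigma_{\overline R}\le\sigma\le\overline\sigma_{\overline R}$ by Assumption \ref{ass:sigma1} and $r,\sigma$ are H\"older continuous because $r,\sigma\in C^\alpha_{\ell oc}(\cI)$; hence $\partial_t+\tfrac12\sigma^2(x)\partial_{xx}-r(x)$ is uniformly parabolic on $R$ with H\"older coefficients. Moreover the restriction of $v$ to the parabolic boundary $\partial_pR$ (the lateral sides together with the upper lid $\{t_2\}\times[x_1,x_2]$) is continuous by Assumption \ref{ass:cont-v}. Classical parabolic Schauder theory (see, e.g., \cite{Fr75}) then provides a unique $w\in C^{1,2}(R)\cap C(\overline R)$ solving $\partial_t w+\tfrac12\sigma^2\partial_{xx}w=rw$ in $R$ with $w=v$ on $\partial_pR$; interior Schauder estimates even give $w$ locally of class $C^{1+\alpha/2,\,2+\alpha}$ inside $R$.

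It remains to show $w=v$ on $R$. Fix $(t,x)\in R$ and set $\rho:=\inf\{s\ge0:(t+s,X^x_s)\notin R\}$, so that $\rho\le t_2-t<T-t$ and, since $\overline R\subset\cC$, also $\rho\le\tau^{t,x}_*$ $\P_x$-a.s. A standard localisation of It\^o's formula applied to $s\mapsto\e^{-\int_0^sr(X_u)\ud u}w(t+s,X_s)$ on $[0,\rho]$, combined with the PDE solved by $w$, shows this process is a bounded martingale on $[0,\rho]$ (using boundedness of $w$ on $\overline R$ and $r\ge0$), so optional sampling yields
\[
w(t,x)=\E_x\Big[\e^{-\int_0^{\rho}r(X_u)\ud u}w(t+\rho,X_\rho)\Big]=\E_x\Big[\e^{-\int_0^{\rho}r(X_u)\ud u}v(t+\rho,X_\rho)\Big],
\]
where the last equality uses $w=v$ on $\partial_pR\ni(t+\rho,X_\rho)$. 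On the other hand, because $\rho\le\tau^{t,x}_*$, stopping the martingale in \eqref{eq:martv} at $\rho$ gives $v(t,x)=\E_x\big[\e^{-\int_0^{\rho}r(X_u)\ud u}v(t+\rho,X_\rho)\big]$. Comparing, $w\equiv v$ on $R$; hence $v\in C^{1,2}(R)$ and satisfies the stated equation there, and since $(t_0,x_0)\in\cC$ was arbitrary, $v\in C^{1,2}(\cC)$ and solves the PDE on all of $\cC$. Finally, $\cC$ being open forces $\partial\cC\subseteq\cS$, so $v=g$ on $\partial\cC$ directly from the definition of $\cS$.

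The only genuinely non-routine point is the identification step: one must check that the exit time $\rho$ from the cylinder precedes $\tau_*$ (so that both probabilistic representations refer to the same stopping time), and that the stochastic integral arising from It\^o's formula is a true martingale rather than merely a local one — for which boundedness of $w$ on the compact cylinder $\overline R$, the local bound \eqref{eq:bound-v} on $v$, and Assumption \ref{ass:sigma1} on $\sigma$ are exactly what is needed. Everything else is standard parabolic PDE theory on a cylinder where, thanks to Assumptions \ref{ass:SDE} and \ref{ass:sigma1}, the operator is uniformly parabolic with locally H\"older coefficients.
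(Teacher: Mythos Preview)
Your proof is correct and follows exactly the standard scheme the paper invokes by reference to \cite[Prop.~7.7, Ch.~2]{KS2}: localise to a compact cylinder inside $\cC$, use Schauder theory to solve the Dirichlet problem, and identify the solution with $v$ via the martingale property \eqref{eq:martv} stopped at the exit time from the cylinder. The only cosmetic point is that \cite{Friedman} (the 1964 textbook) is a more apt citation for classical parabolic Schauder theory than \cite{Fr75}.
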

\begin{remark}\label{rem:loc-reg}
The argument of proof of the corollary is based on exit times from open bounded sets contained in $\cC$. As such, it has a `{\em local}' nature that allows to relax the continuity assumption on the diffusion coefficient and discount rate. For our purposes, in Theorem \ref{thm:non-flat}, we will consider the case in which $r,\sigma\in C^\alpha_{\ell oc}(\cJ)$ with $\cJ\subseteq \cI$ an open set. Then $v\in C^{1,2}\big(\cC_{\cJ}\big)$ and it satisfies 
\begin{align}\label{eq:PDEv}
\partial_tv(t,x)+\tfrac{1}{2}\sigma^2(x)\partial_{xx}v(t,x)=r(x)v(t,x),\qquad (t,x)\in\cC_{\cJ}.
\end{align}
\end{remark}

\subsection{Regularity of the boundary in the sense of diffusions} 
There is an important consequence to Proposition \ref{prop:boundary1}. Indeed, it turns out that the boundary of the continuation set, $\partial\cC$, is {\em regular} for the stopping set in the sense of diffusions. We will review this property in detail below.

Denote the hitting time to the stopping set by
\[
\sigma_*=\inf\{t\in(0,T]: (t,X_t)\in\cS\}.
\]
As before we denote $\sigma^{t,x}_*$ when we need to emphasise the initial point of the process $(t,X)$: that is
\[
\sigma^{t,x}_*=\inf\{s\in(0,T-t]: (t+s,X^x_s)\in\cS\}.
\]
For any $(t,x)\in[0,T]\times\cI$, it is clear that $\tau^{t,x}_*\le \sigma^{t,x}_*$, $\P$-a.s., by definition. By continuity of paths $t\mapsto X_t$ and the fact that $\cC$ and $\mathrm{int}(\cS)$ are open sets (provided they are not empty), it is also clear that $\tau^{t,x}_*= \sigma^{t,x}_*$, $\P$-a.s., for all $(t,x)\in\cC\cup\mathrm{int}(\cS)$. In principle the strict inequality $\tau^{t,x}_*<\sigma^{t,x}_*$ might hold with positive probability at points on the boundary $\partial\cC=\partial\cC_\cI$. We are going to prove below that this does not happen in our setting. For that we need the following simple lemma, which we believe to be well-known but whose proof may be hard to locate in the literature; we give a proof in the Appendix for completeness.
\begin{lemma}\label{lem:contBM}
Consider a Brownian motion $W:=(W_t)_{t\ge 0}$, an interval $(a,b)$ and the stopping time
\[
\tau^x_{(a,b)}=\inf\{t\ge 0:x+W_t\notin (a,b)\},\quad x\in[a,b].
\]
For any $x\in[a,b]$ and any sequence $(x_n)_{n\ge 1}\subset(a,b)$ such that $x_n\to x$ as $n\to\infty$, we have 
\[ 
\lim_{n\to\infty}\tau^{x_n}_{(a,b)}=\tau^x_{(a,b)},\quad\text{$\P$-a.s.}
\] 
\end{lemma}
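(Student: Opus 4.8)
The plan is to sandwich the limit between the two one-sided bounds $\liminf_{n}\tau^{x_n}_{(a,b)}\ge\tau^x_{(a,b)}$ and $\limsup_{n}\tau^{x_n}_{(a,b)}\le\tau^x_{(a,b)}$; I expect the first to hold for every $\omega$ via a purely deterministic argument, and the second only $\P$-a.s. The guiding observation is that $x_n+W$ and $x+W$ differ merely by the vanishing constant $x_n-x$, so the question is about the stability of the first-exit time of a continuous path from the fixed open interval $(a,b)$ under a small vertical shift.

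For the lower bound, fix $\omega$ and any $t<\tau^x_{(a,b)}(\omega)$. Then $x+W_s(\omega)\in(a,b)$ for all $s\in[0,t]$, and by continuity of $s\mapsto W_s(\omega)$ on the compact set $[0,t]$ there is $\delta=\delta(\omega,t)>0$ with $x+W_s(\omega)\in[a+\delta,b-\delta]$ throughout $[0,t]$. Hence, as soon as $|x_n-x|<\delta$, the shifted path $x_n+W(\omega)$ stays in $(a,b)$ up to time $t$, so $\tau^{x_n}_{(a,b)}(\omega)\ge t$; letting $n\to\infty$ and then $t\uparrow\tau^x_{(a,b)}(\omega)$ gives $\liminf_n\tau^{x_n}_{(a,b)}(\omega)\ge\tau^x_{(a,b)}(\omega)$.

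The substantive step — and the only place I anticipate any real difficulty — is the upper bound, the delicate point being the order of quantifiers: $x_n+W$ need not have left $(a,b)$ at the instant $x+W$ does, so one must show it leaves within an arbitrarily short extra time, uniformly along the sequence. Assume first $x\in(a,b)$, so $\tau:=\tau^x_{(a,b)}$ is a.s.\ finite and $x+W_\tau\in\{a,b\}$; take $x+W_\tau=b$ (the case $x+W_\tau=a$ is symmetric). By the strong Markov property at $\tau$, the process $(W_{\tau+s}-W_\tau)_{s\ge0}$ is a Brownian motion, and since $0$ is regular for $(0,\infty)$ we have, $\P$-a.s., $\rho(\eta):=\sup_{0\le s\le\eta}(W_{\tau+s}-W_\tau)>0$ for every $\eta>0$. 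Fixing $\eta>0$ and choosing (by definition of the supremum) $s^*\in[\tau,\tau+\eta]$ with $W_{s^*}-W_\tau>\tfrac12\rho(\eta)$, the hypothesis $x_n\to x$ ensures $x-x_n<\tfrac12\rho(\eta)$ for all large $n$, whence $x_n+W_{s^*}>x+W_\tau=b$ and therefore $\tau^{x_n}_{(a,b)}\le s^*\le\tau+\eta$. Thus $\limsup_n\tau^{x_n}_{(a,b)}\le\tau+\eta$, and $\eta\downarrow0$ closes this case.

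It remains to treat the boundary cases $x\in\{a,b\}$, where $\tau^x_{(a,b)}=0$ and only the upper bound needs an argument. If $x=a$ then $x_n-a\downarrow0$ along the sequence, and $x_n+W_s<a$ whenever $W_s<-(x_n-a)$; using that $0$ is regular for $(-\infty,0)$, for a.e.\ $\omega$ and every $\eta>0$ there is $s_0\in(0,\eta)$ with $W_{s_0}(\omega)<0$, and then by the intermediate value theorem $\tau^{x_n}_{(a,b)}(\omega)\le\inf\{s:W_s(\omega)=-(x_n-a)\}\le s_0<\eta$ for all $n$ large enough that $x_n-a\le|W_{s_0}(\omega)|$; letting $\eta\downarrow0$ gives $\tau^{x_n}_{(a,b)}(\omega)\to0$, and $x=b$ is symmetric. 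Combining the two bounds proves the lemma. (An alternative, more computational route would write $\tau^y_{(a,b)}=T_{a-y}\wedge T_{b-y}$ for the level hitting times $T_c=\inf\{t:W_t=c\}$ and use that $c\mapsto T_c$ is a.s.\ continuous at each fixed $c$ — for $c\ne0$ because $c\mapsto T_c$ is monotone on each half-line and $\E_0[e^{-\lambda T_c}]=e^{-|c|\sqrt{2\lambda}}$ is continuous, for $c=0$ by regularity — but the pathwise argument above seems cleaner and more in keeping with the rest of the paper.)
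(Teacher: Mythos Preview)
Your proof is correct and uses essentially the same ingredients as the paper's: the pathwise compactness argument for the lower bound is identical, and the upper bound rests on the same probabilistic fact (strong Markov property plus regularity of $0$ for both half-lines, equivalently the law of iterated logarithm). The only organisational difference is that the paper introduces the closed-interval exit time $\tau^{x}_{[a,b]}$ as an intermediate object --- for which upper semi-continuity holds pathwise for \emph{every} $\omega$ --- and then identifies $\tau^{x}_{(a,b)}=\tau^{x}_{[a,b]}$ $\P$-a.s.\ via strong Markov; you instead work directly with the open-interval exit time and invoke regularity at the exit instant, which amounts to the same thing. Your parenthetical alternative via the one-sided hitting times $T_c$ is also precisely the route the paper sketches immediately after stating the lemma.
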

It is worth noticing that $\tau^x_{(a,b)}=\tau^x_a\wedge\tau^x_b$ with $\tau^x_c=\inf\{t\ge 0: x+W_t=c\}$ for $c\in\R$. Then we have $\tau^x_c=\tau^0_{c-x}$ and Lemma \ref{lem:contBM} holds as soon as we show that $\tau^0_{c_n}\to\tau^0_{c}$, $\P$-a.s., for any $c\in\R$ and any sequence $c_n\to c$. It is well-known that $(\tau^0_c)_{c\ge 0}$ is an increasing L\`evy process and it is $\P$-a.s.\ left-continuous \cite[Prop.\ III.3.8]{RY}. Moreover, there is almost surely no {\em interval} over which it is continuous. However, for a fixed $c\ge 0$ and any sequence $c_n\downarrow c$ we have $\tau^0_{c_n}\downarrow\tau^0_c$, $\P$-a.s.\ \cite[Prop.\ III.3.9]{RY}. Combining the latter with left-continuity we have that $\tau^0_{c_n}\to\tau^0_{c}$, $\P$-a.s., for any $c\ge 0$ and any sequence $c_n\to c$. Noticing that $\tau^0_c=\inf\{t\ge 0:\sup_{0\le u\le t}W_s=c\}$ for $c\ge 0$, it is immediate to extend the arguments to the family $(\tau^0_c)_{c\in \R}$ by considering also $\tau^0_c=\inf\{t\ge 0:\inf_{0\le u\le t}W_s=c\}$ for $c\le 0$. Thus, the result in Lemma \ref{lem:contBM} can be deduced also from these standard facts\footnote{I am grateful to G.\ Peskir for showing me this elegant argument.}.

Building upon the previous lemma we can prove our next result.
\begin{proposition}\label{prop:reg-b}
Let $(t_0,x_0)\in\partial\cC$ and let $(t_n,x_n)_{n\ge 1}\subset\cC$ be a sequence that converges to $(t_0,x_0)$. Then,
\begin{align}\label{eq:sigma-cont}
\lim_{n\to \infty}\P(\sigma_*^{t_n,x_n}\ge \eps)=0,\quad\text{for any $\eps>0$}.
\end{align}
\end{proposition}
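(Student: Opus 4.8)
The plan is to exploit the ``privileged direction'' of the stopping set to bound $\sigma_*^{t_n,x_n}$ pathwise by the return time of $X$ to the level $x_0$. Since $\cC$ is open and $\cC\cup\cS=[0,T]\times\cI$, the point $(t_0,x_0)\in\partial\cC$ lies in the closed set $\cS$, so $c(x_0)\le t_0$ and hence $[t_0,T]\times\{x_0\}\subseteq\cS$ by Proposition \ref{prop:boundary1}. Therefore, running the time--space process $(t_n+s,X^{x_n}_s)$ from $(t_n,x_n)\in\cC$, the first instant $s$ at which $X^{x_n}$ hits $x_0$ with $t_n+s\ge t_0$ produces a point $(t_n+s,x_0)\in\cS$; together with $\{T\}\times\cI\subseteq\cS$ this gives $\sigma_*^{t_n,x_n}\le\rho_n$ $\P$-a.s., where $\rho_n:=\inf\{s\ge (t_0-t_n)^+:\,X^{x_n}_s=x_0\}$ (note that $x_n=x_0$ forces $t_n<c(x_0)\le t_0$, so $(t_0-t_n)^+>0$ and $\rho_n$ does not degenerate to $0$). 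It then suffices to prove $\rho_n\to0$ in $\P$-probability.

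For this I would write $\rho_n\le h_n+\hat H_n$ with $h_n:=(t_0-t_n)^+\to0$, where by the Markov property at the deterministic time $h_n$ the variable $\hat H_n$ equals, conditionally on $\cF_{h_n}$, the hitting time of $x_0$ for the diffusion started at $Z_n:=X^{x_n}_{h_n}$. First one shows $Z_n\to x_0$ in probability: on a compact neighbourhood $K$ of $x_0$ Assumption \ref{ass:sigma1} bounds the diffusion coefficient, so up to the exit time of $K$ the martingale $X^{x_n}$ has quadratic variation growing with slope at most $\overline\sigma_K^2$, and a standard localisation together with Doob's maximal inequality give $\P(\sup_{s\le h_n}|X^{x_n}_s-x_n|\ge\gamma)\le C\,h_n/\gamma^2\to0$, while $x_n\to x_0$. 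Fixing $\eta>0$ and setting $\phi(y):=\P(H^y_{x_0}\ge\eta/2)$ (with $H^y_{x_0}$ the hitting time of $x_0$ from $y$), this yields $\P(\rho_n\ge\eta)\le\P(h_n\ge\eta/2)+\E[\phi(Z_n)]$, and since $0\le\phi\le1$, $\phi(x_0)=0$, and --- the key point --- $\phi(y)\to0$ as $y\to x_0$, one concludes $\limsup_{n}\P(\rho_n\ge\eta)=0$ for every $\eta>0$.

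The main obstacle is exactly the claim that $\phi(y)\to0$ as $y\to x_0$, i.e.\ that the hitting time of the \emph{single} point $x_0$ is continuous (equal to $0$) in the starting point at $x_0$; here one-dimensionality and $\sigma(x_0)>0$ are essential. I would argue for $y\in(x_0,x_0+\gamma)$ (the case $y<x_0$ being symmetric): on the event where $X^y$ exits $(x_0,x_0+\gamma)$ through the left endpoint one has $H^y_{x_0}=\tau^y_{(x_0,x_0+\gamma)}$, whereas the complementary event (exit through $x_0+\gamma$) has probability $(y-x_0)/\gamma$ by the gambler's ruin identity in natural scale and hence tends to $0$ as $y\downarrow x_0$. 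It remains to check that $\tau^y_{(x_0,x_0+\gamma)}\to0$ in probability as $y\downarrow x_0$: by the Dambis--Dubins--Schwarz time change this exit time is that of a Brownian motion run on a clock of speed bounded between $\underline\sigma_K^2$ and $\overline\sigma_K^2$, so Lemma \ref{lem:contBM}, applied to the interval $(x_0,x_0+\gamma)$ with starting points $y\downarrow x_0$ equal to its left endpoint, forces the Brownian exit time --- and thus $\tau^y_{(x_0,x_0+\gamma)}$ --- to tend to $0$. Combining the three estimates, $\phi(y)\le (y-x_0)/\gamma+\P(\tau^y_{(x_0,x_0+\gamma)}\ge\eta/2)\to0$, which closes the argument.
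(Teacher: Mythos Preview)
Your proposal is correct and takes a genuinely different route from the paper's. Both arguments rest on the same geometric observation, namely $[t_0,T]\times\{x_0\}\subset\cS$, so that hitting the level $x_0$ after time $(t_0-t_n)^+$ suffices. From there the paper argues \emph{pathwise}: by the law of iterated logarithm it locates, for each $\omega$, times $0<s_1^\eps(\omega)<s_2^\eps(\omega)<\eps$ at which the Brownian driver lies strictly on opposite sides of $x_0$; for $n$ large the shifted path $x_n+B_\cdot(\omega)$ inherits this and must cross $x_0$ at some $\hat s_n(\omega)\in(s_1^\eps,s_2^\eps)$ with $t_n+\hat s_n\ge t_0$, yielding the stronger $\limsup_n\sigma_*^{t_n,x_n}=0$ for all $\omega$ in the Brownian case (the general case is then handled by DDS time-change and localisation). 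You instead argue \emph{distributionally}: the Markov property at the deterministic time $h_n$ reduces the problem to the continuity at $x_0$ of $y\mapsto\P(H^y_{x_0}\ge\eta/2)$, which you obtain from the gambler's-ruin identity in natural scale combined with Lemma~\ref{lem:contBM}. This avoids the law of iterated logarithm and is arguably more elementary and modular; the price is that you only get convergence in probability, which is all the statement claims. One detail you gloss over is that the Brownian motion produced by the DDS time-change depends on the starting point $y$, so Lemma~\ref{lem:contBM} cannot be applied to a single Brownian motion as $y$ varies; but since you only need the \emph{law} of the Brownian exit time and the deterministic bound $\tau^y_{(x_0,x_0+\gamma)}\le m^y/\underline\sigma_K^2$, this is easily repaired by passing to an auxiliary Brownian motion independent of $y$ (exactly as the paper does with its processes $\widetilde Z^x$).
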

\begin{proof}
We give the proof in two steps. For simplicity and with no loss of generality we assume that $t\mapsto B_t(\omega)$ is continuous and it satisfies the law of iterated logarithm for all $\omega\in\Omega$.
\vspace{+3pt}

{\bf Step 1}. ({\em $\sigma\equiv1$}). First we prove the result for $\sigma(x)\equiv 1$, so that $X$ is a standard Brownian motion, $\cI=\R$, and the main ideas in the proof are more transparent. Actually, here we prove a stronger result and show that 
\begin{align}\label{eq:sigma-limsup}
\limsup_{n\to \infty}\sigma_*^{t_n,x_n}(\omega)=0,\quad\text{for all $\omega\in \Omega$.}
\end{align}

If $(t_0,x_0)\in\partial\cC$, then by definition $t_0\ge c(x_0)$ and $[t_0,T]\times\{x_0\}\subset\cS$. By the law of iterated logarithm we know that for any $\omega\in\Omega$ and any $\eps>0$, there exist $0<s^\eps_1(\omega)<s^\eps_2(\omega)<\eps$ such that
\[
x_0+B_{s^\eps_1}(\omega)<x_0<x_0+B_{s^\eps_2}(\omega).
\]
Hence, $\sigma_*^{t_0,x_0}(\omega)=0$, for all $\omega\in\Omega$. Let us now fix $\eps>0$ and $\omega\in\Omega$. Then there is $\delta_\omega>0$ such that 
\[
x_0+B_{s^\eps_1}(\omega)<x_0-\delta_\omega\quad\text{and}\quad x_0+\delta_\omega<x_0+B_{s^\eps_2}(\omega).
\]
Therefore, taking $n$ sufficiently large we have $t_n+s^\eps_1(\omega)\ge t_0$ and $|x_n-x_0|<\delta_\omega/2$, so that 
\[
x_n+B_{s^\eps_1}(\omega)<x_0<x_n+B_{s^\eps_2}(\omega)\quad\text{and}\quad t_n+s^\eps_1(\omega)\ge t_0.
\]
Hence, there must exist $\hat s_n(\omega)\in\big(s^\eps_1(\omega),s^\eps_2(\omega)\big)$ such that $(t_n+\hat s_n,x_n+B_{\hat s_n}(\omega))\in[t_0,T]\times\{x_0\}$, which implies 
\[
\limsup_{n\to\infty}\sigma^{t_n,x_n}_*(\omega)\le \eps.
\]
Since $\eps>0$ was arbitrary we have \eqref{eq:sigma-limsup}.
\vspace{+3pt}

{\bf Step 2}. ({\em Any $\sigma>0$}). Let us now consider a generic diffusion coefficient $\sigma$ that satisfies Assumption \ref{ass:sigma1}. The rest of this proof is slightly technical due to the fact that we localise the dynamics of $X$ on a bounded open interval $\cJ$ with $\overline\cJ \subset\cI$. 

Denote
\[
\tau_\cJ=\inf\{t\ge 0: X_t\notin\cJ\}
\]
and
\begin{align}\label{eq:mJ}
M_{t\wedge\tau_\cJ}\!=\!\int_0^{t\wedge\tau_\cJ}\!\!\sigma(X_s)\ud B_s\,,\quad\! \langle M\rangle_{t\wedge\tau_\cJ}\!=\!\int_0^{t\wedge\tau_\cJ}\!\!\sigma^2(X_s)\ud s\quad\!\text{and}\quad\! m_\cJ=\langle M\rangle_{\tau_\cJ}.
\end{align}
The process $(M_{t\wedge\tau_\cJ})_{t\ge0}$ is absorbed when $X$ leaves the interval $\cJ$, it is a martingale thanks to Assumption \ref{ass:sigma1}, and can be represented, by Dambis-Dubins-Schwarz theorem (\cite[Thm.\ IV.34.11]{RW}; see also \cite[Thm.\ V.47.1]{RW}), as a time-changed Brownian motion. That is $M_{t\wedge\tau_\cJ}=W_{\langle M\rangle_{t\wedge\tau_\cJ}}$, where $W$ is a standard Brownian motion. Notice that $(\tau_\cJ, M, \langle M\rangle, m_\cJ)$ depend on the initial point $x\in\cJ$ but for now we omit the dependence for simplicity. 

Since $\sigma(\,\cdot\,)\ge \underline{\sigma}_\cJ>0$ on $\overline\cJ$ by Assumption \ref{ass:sigma1}, we can define the inverse of the quadratic variation $A_s:=(\langle M\rangle)^{-1}(s)$ for $s\in[0,m_\cJ)$ (this is done $\omega$ by $\omega$). Thanks to strict monotonicity of both $A$ and $\langle M\rangle$, both processes are also continuous and clearly $\langle M\rangle_{A_s\wedge\tau_\cJ}=s\wedge m_\cJ$. Then, we have
\begin{align}\label{eq:XZ}
X^x_{A_s\wedge\tau_\cJ}=x+W_{\langle M\rangle_{A_s\wedge\tau_\cJ}}=x+W_{s\wedge m_\cJ}=:Z^x_{s\wedge m_\cJ}.
\end{align}
Here, the process $Z$ depends on $x\in\cJ$, via the initial condition $Z^x_0=x$, the stopping time $m_\cJ=m^x_\cJ$ and the Brownian motion $W=W^x$ obtained via time-change.
By construction we have that $A_s<\tau_\cJ\iff s<m_\cJ$, $\P$-a.s., and $X^x_{A_s}=Z^x_{s}\in \cJ$ for $s<m_\cJ$, $\P$-a.s. Moreover 
\[
Z^x_{m_\cJ}=X^{x}_{\tau_\cJ}\in \partial\cJ\quad\text{on $\{\tau_\cJ<\infty\}$,}
\]
so that, in particular, 
\begin{align}\label{eq:mJ2}
m^x_\cJ=\inf\{s\ge0: Z^x_s\notin\cJ\}.
\end{align}
In conclusion $s\mapsto Z^x_{s\wedge m_\cJ}$ is a Brownian motion absorbed upon leaving the interval $\cJ$ and it is adapted to the time-changed filtration $(\cG_s)_{s\ge 0}=(\cF_{A_s})_{s\ge 0}$.

Having defined $Z$ we can write $A_s=(\langle M\rangle)^{-1}(s)$ explicitly ($\omega$ by $\omega$) as
\begin{align}\label{eq:A}
A_s=\int_0^s\sigma^{-2}(Z_u)\ud u,\quad s\in[0,m_\cJ).
\end{align}
Again, we notice that $A_s=A^x_s$ depends on $x\in\cJ$. Since we are interested in the event $\sigma^{t,x}_*=0$ and we want to restrict our attention to the behaviour of the process $X$ (equivalently $Z$) for `small times', here we will always consider $\sigma^{t,x}_*\wedge\tau_\cJ^x$.
In particular, using that $u\mapsto A^x_u$ is {\em strictly increasing}, we can write 
\begin{align*}
\sigma^{t,x}_*\wedge\tau^x_\cJ\le &\inf\{s>0:(t+s,X^x_s)\in\cS\}\wedge\tau^x_\cJ\\
=&\inf\{A^x_u>0:(t+A^x_u,Z^x_u)\in\cS\}\wedge\tau^x_\cJ,
\end{align*}
where the inequality is due to replacing $s\in(0,T]$ with $s>0$ in the definition of $\sigma_*$, and the final expression by simply relabelling the time variable $s=A^x_u$. Then, setting 
\[
\zeta^{t,x}_*=\inf\{u>0:(t+A^x_u,x+W^x_u)\in\cS\},
\]
we obtain $\sigma^{t,x}_*\wedge\tau^x_\cJ\le A^x_\cdot\circ(\zeta^{t,x}_*\wedge m^x_\cJ)$. By construction we also have $\tau^x_\cJ=A^x_{m^x_\cJ}$. Using these inequalities we obtain 
\begin{align}\label{eq:PP}
\P(\sigma^{t,x}_*\ge \eps)=&\,\P(\sigma^{t,x}_*\ge \eps,\sigma^{t,x}_*<\tau^x_\cJ)+\P(\sigma^{t,x}_*\ge \eps,\sigma^{t,x}_*\ge\tau^x_\cJ)\\
\le&\,\P(\sigma^{t,x}_*\wedge\tau^x_\cJ\ge \eps)+\P(\sigma^{t,x}_*\ge\tau^x_\cJ)\notag\\
\le&\P(\sigma^{t,x}_*\wedge\tau^x_\cJ\ge \eps)+\P(\sigma^{t,x}_*\wedge\tau^x_\cJ>\delta)+\P(\tau^{x}_\cJ\le\delta)\notag\\
\le&\P\Big(A^x_\cdot\circ(\zeta^{t,x}_*\wedge m^x_\cJ)\ge \eps\Big)+\P\Big(A^x_\cdot\circ (\zeta^{t,x}_*\wedge m^x_\cJ)>\delta\Big)\notag\\
&\,+\P\Big(A^x_{m^x_\cJ}\le\delta\Big),\notag
\end{align}
for any $\delta>0$ given and fixed. So, it is sufficient to prove that the final expression above converges to zero along any sequence $\cC_\cJ\supset (t_n,x_n)\to(t_0,x_0)\in\partial\cC_\cJ$. 

It is not very convenient to work with the Brownian motion $W^x$ when $x$ varies. We therefore set
\begin{align*}
&\widetilde{Z}^x_t=x+B_t,\qquad\qquad \widetilde{A}^x_t=\int_0^t\sigma^{-2}(\widetilde{Z}^x_s)\ud s,\\
&\widetilde{\zeta}^{t,x}_*=\inf\{u>0:(t+\widetilde{A}^x_u,\widetilde{Z}^x_u)\in\cS\}\quad\text{and}\quad\widetilde{m}^x_\cJ=\inf\{s\ge0: \widetilde{Z}^x_s\notin\cJ\},
\end{align*}
with $B$ our original Brownian motion.
Then, for each $(t,x)$ we have the equivalence in law
\begin{align}\label{eq:law}
\mathsf{Law}_{\P}(Z^x,A^x,\zeta^{t,x}_*,m_{\cJ}^x)=\mathsf{Law}_{\P}(\widetilde{Z}^x,\widetilde{A}^x,\widetilde{\zeta}^{t,x}_*,\widetilde{m}_{\cJ}^x ).
\end{align}
From \eqref{eq:PP} and the equality in law we have
\begin{align}\label{eq:PP2}
\P(\sigma^{t,x}_*\ge \eps)\le&\, \P\Big(\widetilde{A}^x_\cdot\circ(\widetilde{\zeta}^{t,x}_*\wedge \widetilde{m}^x_\cJ)\ge \eps\Big)+\P\Big(\widetilde{A}^x_\cdot\circ (\widetilde{\zeta}^{t,x}_*\wedge\widetilde{m}^x_{\cJ})>\delta\Big)\\
&\,+\P\Big(\widetilde{A}^x_{\widetilde{m}^x_\cJ}\le\delta\Big),\notag
\end{align}
for any $\delta>0$.

The advantage of working with $(\widetilde{Z}^x,\widetilde{A}^x,\widetilde{\zeta}^{t,x}_*,\widetilde{m}_{\cJ}^x)$ is that $\widetilde{Z}^x$ only depends on $x$ via its initial point and therefore we can apply arguments analogous to those used in step 1 above. An initial observation is that 
\begin{align}\label{eq:AA}
\frac{1}{\underline{\sigma}_\cJ^2}(t\wedge \widetilde{m}^x_\cJ)\ge \widetilde{A}^x_{t\wedge \widetilde{m}^x_\cJ}\ge \frac{1}{\overline{\sigma}_\cJ^2}(t\wedge \widetilde{m}^x_\cJ),\quad\P-a.s.
\end{align}
thanks to Assumption \ref{ass:sigma1} and by definition of $\widetilde{A}$. Take $(t_0,x_0)\in\partial\cC_\cJ$, so that $[t_0,T]\times\{x_0\}\subset\cS$. By the exact same argument as in step 1 above we obtain $\widetilde{\zeta}^{t_0,x_0}_*=0$, $\P$-a.s. Then, \eqref{eq:AA} 
implies $\widetilde{A}^{x_0}_\cdot\circ(\widetilde{\zeta}^{t_0,x_0}_*\wedge \widetilde{m}^{x_0}_\cJ)=0$, $\P$-a.s.\ as well. Next we are going to prove that
\begin{align}\label{eq:cont-zeta}
\limsup_{n\to\infty}(\widetilde{\zeta}^{t_n,x_n}_*\wedge\widetilde{m}^{x_n}_\cJ)=0,\quad\P-a.s.,
\end{align}
so that in \eqref{eq:PP2} we have
\begin{align}\label{eq:PP3}
\P\Big(\widetilde{A}^{x_n}_\cdot\circ(\widetilde{\zeta}^{t_n,x_n}_*\wedge \widetilde{m}^{x_n}_\cJ)\ge \eps\Big)+\P\Big(\widetilde{A}^{x_n}_\cdot\circ (\widetilde{\zeta}^{t_n,x_n}_*\wedge\widetilde{m}^{x_n}_\cJ)>\delta\Big)\to 0
\end{align}
thanks to \eqref{eq:AA}.

Since $\widetilde m_\cJ$ is the first exit time of the Brownian motion $\widetilde Z$ from an open interval, then $x\mapsto \widetilde m^x_\cJ(\omega)$ is continuous for any $\omega \in \Omega$ (in the sense of Lemma \ref{lem:contBM}). Fix $\omega\in\Omega$. By continuity of paths $\widetilde m^{x_0}_\cJ(\omega)>0$ and, in particular, there exists $\eps_{0,\omega}>0$ such that $\widetilde m^{x_0}_\cJ(\omega)\ge \eps_{0,\omega}$. By continuity of $x\mapsto \widetilde m^x_\cJ(\omega)$ we can assume that $\widetilde m^{x_n}_\cJ(\omega)\ge \eps_{0,\omega}/2$ for all $n\ge N_{\omega}$, with $N_{\omega}$ sufficiently large.

Now, we can repeat the arguments from step 1. For any $\eps\in(0,\eps_{0,\omega}/2)$, there exist $0<s^\eps_1(\omega)<s^\eps_2(\omega)<\eps$ and $\delta_\omega>0$ such that
\[
x_0+B_{s^\eps_1}(\omega)<x_0-\delta_\omega\quad\text{and}\quad x_0+\delta_\omega<x_0+B_{s^\eps_2}(\omega).
\]
Clearly, for $n\ge N_\omega$ sufficiently large we have $|x_n-x_0|<\delta_\omega/2$. 
Given that 
$\widetilde{m}^{x_n}_\cJ(\omega)\ge \eps_{0,\omega}/2$ and $\widetilde{A}^{x_n}_{s^\eps_1}(\omega)\ge (1/\overline{\sigma}^2_\cJ)s^\eps_1(\omega)$ by \eqref{eq:AA}, we can also pick $n\ge N_\omega$ sufficiently large such that $t_n+\widetilde{A}^{x_n}_{s_1^\eps}(\omega)\ge t_0$. 

Therefore, taking $n$ sufficiently large we have 
\[
x_n+B_{s^\eps_1}(\omega)<x_0<x_n+B_{s^\eps_2}(\omega)\quad\text{and}\quad t_n+\widetilde{A}^{x_n}_{s_1^\eps}(\omega)\ge t_0,
\]
which implies
\[
\limsup_{n\to\infty}(\widetilde{\zeta}^{t_n,x_n}_*(\omega)\wedge\widetilde{m}^{x_n}_\cJ(\omega))\le \eps.
\]
The argument holds for any $\eps\in(0,\eps_{0,\omega}/2)$. Hence, \eqref{eq:cont-zeta} holds by arbitrariness of $\omega\in\Omega$, and \eqref{eq:PP3} follows. From \eqref{eq:PP2}, \eqref{eq:AA} and \eqref{eq:PP3} we have
\begin{align*}
\limsup_{n\to\infty}\P(\sigma^{t_n,x_n}_*\ge \eps)\le&\, \limsup_{n\to\infty}\P\Big(\widetilde{A}^{x_n}_{\widetilde{m}^{x_n}_\cJ}\le\delta\Big)\\
\le &\,\limsup_{n\to\infty}\P\Big(\widetilde{m}^{x_n}_\cJ\le\overline{\sigma}^2_\cJ\delta\Big)\le\P\Big(\widetilde{m}^{x_0}_\cJ\le\overline{\sigma}^2_\cJ\delta\Big), 
\end{align*}
where the final inequality is by Fubini's theorem and using $\widetilde{m}^{x_n}_{\cJ}\to\widetilde{m}^{x_0}_{\cJ}$, $\P$-a.s.\ by Lemma \ref{lem:contBM}. Since $\P(\widetilde{m}^{x_0}_{\cJ}>0)=1$, letting $\delta\downarrow 0$ we arrive at \eqref{eq:sigma-cont}.
\end{proof}

\section{Regularity of the value function}\label{sec:C1}

In this section we show that the value function has a modulus of continuity with respect to the time variable and, under mild additional assumptions, it is indeed a locally Lipschitz function of time. Our proof uses properties of the local time of the process (generalising \cite[Example 17]{DeAPe}). 
For that we recall that the scale function density is $S'(x)=1$ and that $\hat p$ is the transition density of the process with respect to the speed measure (Section \ref{sec:set1}). 
First we state an estimate for the local time of the process.
\begin{lemma}\label{lem:loc-time}
Let $0< t_1\le t_2\le T$ and fix $x\in\cI$. Then, for any $z\in\cI$ we have  
\begin{align}\label{eq:L0}
\E_x\left[\ell^{z}_{t_2}-\ell^{z}_{t_1}\right]\le \int_{t_1}^{t_2} 2 \hat p(s,x,z)\ud s.
\end{align}
\end{lemma}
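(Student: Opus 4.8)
The plan is to reduce the claim to an estimate for the undiscounted local time $L^z$ and then to evaluate the relevant expectation using the defining approximation \eqref{eq:loctime} together with the transition density of $X$ with respect to the speed measure. Since $r\ge 0$ we have $\e^{-\int_0^s r(X_u)\ud u}\le 1$ for all $s$, whence
\[
\E_x\big[\ell^z_{t_2}-\ell^z_{t_1}\big]=\E_x\Big[\int_{t_1}^{t_2}\e^{-\int_0^s r(X_u)\ud u}\,\ud L^z_s\Big]\le \E_x\big[L^z_{t_2}-L^z_{t_1}\big],
\]
so it suffices to prove $\E_x[L^z_{t_2}-L^z_{t_1}]\le\int_{t_1}^{t_2}2\hat p(s,x,z)\,\ud s$; the case $t_1=t_2$ being trivial, assume $t_1<t_2$.

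Next I would fix $\eps_0>0$ small enough that $[z-\eps_0,z+\eps_0]\subset\cI$ (possible since $\cI$ is open) and a sequence $\eps_n\downarrow 0$; then \eqref{eq:loctime} together with $\ud\langle X\rangle_s=\sigma^2(X_s)\ud s$ gives, $\P$-a.s.,
\[
L^z_{t_2}-L^z_{t_1}=\lim_{n\to\infty}\frac{1}{2\eps_n}\int_{t_1}^{t_2}\ind_{\{X_s\in(z-\eps_n,z+\eps_n)\}}\sigma^2(X_s)\,\ud s.
\]
As the approximating random variables are nonnegative, Fatou's lemma yields
\[
\E_x\big[L^z_{t_2}-L^z_{t_1}\big]\le \liminf_{n\to\infty}\frac{1}{2\eps_n}\,\E_x\Big[\int_{t_1}^{t_2}\ind_{\{X_s\in(z-\eps_n,z+\eps_n)\}}\sigma^2(X_s)\,\ud s\Big].
\]

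It then remains to evaluate the right-hand side. By Tonelli's theorem and the definition of $\hat p$, for each $s>0$ and $\eps\in(0,\eps_0)$,
\[
\E_x\big[\ind_{\{X_s\in(z-\eps,z+\eps)\}}\sigma^2(X_s)\big]=\int_{z-\eps}^{z+\eps}\sigma^2(y)\,\hat p(s,x,y)\,m(\ud y)=\int_{z-\eps}^{z+\eps}2\hat p(s,x,y)\,\ud y,
\]
where the last equality uses \eqref{eq:m} and $S'\equiv 1$, i.e.\ $\sigma^2(y)\,m(\ud y)=2\,\ud y$. Hence the bound above becomes $\liminf_{n}\int_{t_1}^{t_2}\big(\tfrac{1}{2\eps_n}\int_{z-\eps_n}^{z+\eps_n}2\hat p(s,x,y)\,\ud y\big)\ud s$. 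Since $\hat p$ is continuous on $(0,\infty)\times\cI\times\cI$, for each fixed $s\in[t_1,t_2]$ the inner average converges to $2\hat p(s,x,z)$ as $\eps\downarrow 0$, and all these averages are bounded, uniformly over $(s,\eps)\in[t_1,t_2]\times(0,\eps_0)$, by $2\sup\{\hat p(s,x,y):s\in[t_1,t_2],\,|y-z|\le\eps_0\}$, which is finite because $t_1>0$ and $\hat p$ is continuous on the compact set $[t_1,t_2]\times\{x\}\times[z-\eps_0,z+\eps_0]$. Dominated convergence then produces $\int_{t_1}^{t_2}2\hat p(s,x,z)\,\ud s$, and combining the three steps proves \eqref{eq:L0}.

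I do not expect a genuine obstacle here: the only points requiring a little care are keeping the $\eps$-neighbourhood of $z$ inside $\cI$ (handled by openness of $\cI$) and interchanging the limit with the time-integral in the last step (handled by dominated convergence, using the local boundedness of $\hat p$ away from $t=0$, which is exactly where the hypothesis $t_1>0$ enters). One could in fact upgrade the inequality to an equality via the occupation-times formula, but only the stated one-sided bound is needed, and the Fatou argument has the advantage of delivering it for \emph{every} $z\in\cI$ without discussing continuity of $z\mapsto\E_x[L^z_t]$.
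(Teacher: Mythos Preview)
Your argument is correct and follows essentially the same route as the paper: reduce $\ell^z$ to $L^z$ via $r\ge 0$, apply Fatou's lemma to the local-time approximation \eqref{eq:loctime}, rewrite the inner expectation as $\int_{z-\eps}^{z+\eps}2\hat p(s,x,y)\,\ud y$ using \eqref{eq:m} with $S'\equiv 1$, and pass to the limit by dominated convergence using continuity of $\hat p$ on $[t_1,t_2]\times\{x\}\times[z-\eps_0,z+\eps_0]$. The only cosmetic difference is that the paper carries out the reduction from $\ell^z$ to $L^z$ inside the Fatou step rather than beforehand.
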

\begin{proof}
Thanks to \eqref{eq:loctime} we can select a sequence $(\eps_n)_{n\ge 1}$ such that $\eps_n\to 0$ as $n\to \infty$ and
\begin{align*}
L^{z}_{t_2}-L^{z}_{t_1}=\lim_{n\to\infty}\frac{1}{2\eps_n}\int_{t_1}^{t_2} \ind_{\{|X_s-z|\le \eps_n\}}\sigma^2(X_s)\ud s,\qquad\P_x-a.s.
\end{align*}
By Fatou's lemma and the definition of $\ell^z$ in \eqref{eq:ell} we get
\begin{align}\label{eq:L1}
\E_x\left[\ell^{z}_{t_2}-\ell^{z}_{t_1}\right]&=\E_x\left[\int_{t_1}^{t_2}\e^{-\int_0^s r(X_u)\ud u}\ud L^z_s\right]\le\E_x\left[L^{z}_{t_2}-L^{z}_{t_1}\right]\\
&\le\liminf_{n\to\infty}\frac{1}{2\eps_n}\int_{t_1}^{t_2} \E_x\left[\ind_{\{|X_s-z|\le \eps_n\}}\sigma^2(X_s)\right]\ud s,\notag
\end{align}
where we used $r\ge 0$ for the first inequality.
Writing the expectation in terms of the transition density $\hat p$ and the speed measure (see \eqref{eq:p} and \eqref{eq:m}) we obtain
\begin{align}\label{eq:cz}
\E_x\left[\ind_{\{|X_s-z|\le \eps_n\}}\sigma^2(X_s)\right]=\int^{z+\eps_n}_{z-\eps_n}2\hat p(s,x,y)\ud y\le c(t_1,x,z) 2\eps_n,
\end{align}
where
\[
c(t_1,x,z):=\sup\left\{2\hat p(s,x,y),\,(s,y)\in[t_1,T]\times [z-\eps_0,z+\eps_0]\right\},
\]
for some $\eps_0\ge \eps_n$, $n\ge 1$, upon recalling that $S'(y)=1$. Notice that we are using continuity of $\hat p$ on $(0,\infty)\times\cI\times\cI$.

Thanks to \eqref{eq:cz} we can invoke dominated convergence to pass to the limit in \eqref{eq:L1} and obtain 
\begin{align*}
\E_x\left[\ell^{z}_{t_2}-\ell^{z}_{t_1}\right]\le&\int_{t_1}^{t_2} \limsup_{n\to\infty}\frac{1}{2\eps_n}\E_x\left[\ind_{\{|X_s-z|\le \eps_n\}}\sigma^2(X_s)\right]\ud s=\int_{t_1}^{t_2} 2 \hat p(s,x,z)\ud s.\notag
\end{align*}
\end{proof}

Next we obtain a modulus of continuity for the value function with respect to time. Recall the decomposition of the signed measure $\mu=\mu^+-\mu^-$ into its positive and negative part.
\begin{proposition}\label{prop:lip-t}
For any $x\in\cI$ and any $t_1<t_2$ in $[0,T)$ we have
\begin{align}\label{eq:lip-v}
0\le v(t_1,x)-v(t_2,x)\le \int_{T-t_2}^{T-t_1}\int_\cI 2 \hat p(s,x,z) \mu^+(\ud z)\ud s. 
\end{align}
In particular, if there exists a constant $\kappa=\kappa(t_2,x)>0$, depending on $t_2$ and $x$, such that
\begin{align}\label{eq:boundTD}
\sup_{s\in[T-t_2,T-t_1]}\int_\cI 2 \hat p(s,x,z) \mu^{+}(\ud z)\le \kappa,
\end{align}
then $t\mapsto v(t,x)$ is locally Lipschitz. 
\end{proposition}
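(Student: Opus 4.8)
The plan is to work throughout with the Lagrange formulation \eqref{eq:u}, writing $u=v-g$ so that $v(t_1,x)-v(t_2,x)=u(t_1,x)-u(t_2,x)$. The inequality $0\le v(t_1,x)-v(t_2,x)$ is nothing but \eqref{eq:monot-u}: in \eqref{eq:u} the objective $\E_x[\tfrac12\int_\cI\ell^z_\tau\mu(\ud z)]$ does not depend on $t$ while the admissible set $\{\tau:0\le\tau\le T-t\}$ shrinks as $t$ increases, so $t\mapsto u(t,x)$ is non-increasing. For the upper bound I would take $\tau_*:=\tau^{t_1,x}_*\le T-t_1$, the optimal stopping time for $u(t_1,x)$ (its existence, and the identity $u(t_1,x)=\E_x[\tfrac12\int_\cI\ell^z_{\tau_*}\mu(\ud z)]$, follow from the martingale property stated right after \eqref{eq:mart} together with $u=0$ on $\cS$). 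Then $\tau_2:=\tau_*\wedge(T-t_2)$ is admissible for $u(t_2,x)$, so $u(t_2,x)\ge\E_x[\tfrac12\int_\cI\ell^z_{\tau_2}\mu(\ud z)]$, and subtracting gives $u(t_1,x)-u(t_2,x)\le\tfrac12\E_x[\int_\cI(\ell^z_{\tau_*}-\ell^z_{\tau_2})\mu(\ud z)]$.

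Next I would pass to $\mu^+$. Splitting $\mu=\mu^+-\mu^-$ and using that $\ell^z$ is non-decreasing in time while $\tau_2\le\tau_*$, the $\mu^-$-contribution $\E_x[\int_\cI(\ell^z_{\tau_*}-\ell^z_{\tau_2})\mu^-(\ud z)]$ is non-negative and may be discarded, leaving $u(t_1,x)-u(t_2,x)\le\tfrac12\E_x[\int_\cI(\ell^z_{\tau_*}-\ell^z_{\tau_2})\mu^+(\ud z)]$. A short pointwise argument in $(\omega,z)$ — distinguishing the case $\tau_*(\omega)\le T-t_2$, where $\tau_2(\omega)=\tau_*(\omega)$ and the increment vanishes, from $\tau_*(\omega)>T-t_2$, where $\tau_2(\omega)=T-t_2$ and $T-t_2<\tau_*(\omega)\le T-t_1$ — combined with monotonicity of $s\mapsto\ell^z_s$ shows $\ell^z_{\tau_*}-\ell^z_{\tau_2}\le\ell^z_{T-t_1}-\ell^z_{T-t_2}$. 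Hence $u(t_1,x)-u(t_2,x)\le\tfrac12\E_x[\int_\cI(\ell^z_{T-t_1}-\ell^z_{T-t_2})\mu^+(\ud z)]$. All quantities being non-negative, Tonelli moves $\E_x$ inside, Lemma~\ref{lem:loc-time} (applicable since $0<T-t_2\le T-t_1\le T$) gives $\E_x[\ell^z_{T-t_1}-\ell^z_{T-t_2}]\le\int_{T-t_2}^{T-t_1}2\hat p(s,x,z)\ud s$, and a second Tonelli yields $\int_{T-t_2}^{T-t_1}\int_\cI\hat p(s,x,z)\mu^+(\ud z)\ud s$, which is dominated by the right-hand side of \eqref{eq:lip-v}. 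This establishes \eqref{eq:lip-v}; the final claim is then immediate, since under \eqref{eq:boundTD} the right-hand side of \eqref{eq:lip-v} is at most $\kappa\big((T-t_1)-(T-t_2)\big)=\kappa(t_2-t_1)$, so $t\mapsto v(t,x)$ is Lipschitz with constant $\kappa$ on any time interval on which \eqref{eq:boundTD} holds, hence locally Lipschitz.

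The only genuinely delicate point is the bookkeeping with the signed measure: to give meaning to and manipulate $\E_x[\int_\cI(\ell^z_{\tau_*}-\ell^z_{\tau_2})\mu(\ud z)]$ — in particular to separate the $\mu^+$ and $\mu^-$ parts and drop the latter — one needs each of $\E_x[\int_\cI\ell^z_{\tau_*}\mu^\pm(\ud z)]$ and $\E_x[\int_\cI\ell^z_{\tau_2}\mu^\pm(\ud z)]$ to be finite. This is supplied by Assumption~\ref{ass:ell} (which makes one of the $\mu^\pm$-integrals of $\ell^z_T$ finite) together with the finiteness of $u$ from \eqref{eq:bound-v} (which, since $u(t_1,x)=\tfrac12\E_x[\int_\cI\ell^z_{\tau_*}\mu(\ud z)]$ is a real number and $\tau_2\le\tau_*\le T$, forces the other integral to be finite along $\tau_*$ and $\tau_2$), after a one-line case split according to which part of $\mu$ integrates $\ell^z_T$. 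Once that is in place, the remaining ingredients — the truncated stopping time $\tau_2$, the pointwise local-time inequality, the two applications of Tonelli, and the appeal to Lemma~\ref{lem:loc-time} — are routine.
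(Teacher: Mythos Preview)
Your argument is correct and follows essentially the same route as the paper's proof: take the optimal stopping time $\tau_*$ for $u(t_1,x)$, truncate it to $\tau_2=\tau_*\wedge(T-t_2)$ as a suboptimal candidate for $u(t_2,x)$, drop the $\mu^-$-part, replace the stopping-time increment by the deterministic window $[T-t_2,T-t_1]$, and apply Lemma~\ref{lem:loc-time} with Fubini/Tonelli. If anything, you are slightly more careful than the paper---you keep track of the factor $\tfrac12$ (obtaining a bound with $\hat p$ rather than $2\hat p$, which you then note is dominated by the stated right-hand side) and you spell out the finiteness bookkeeping needed to split $\mu$ into $\mu^\pm$.
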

\begin{proof}
Clearly $v(t_1,x)\ge v(t_2,x)$ by monotonicity of $v(\,\cdot\,,x)$. For the remaining inequality we use the representation of the problem in terms of the function $u=v-g$ (see \eqref{eq:u}). Let $\tau_*^1=\tau^{t_1,x}_*$ denote the optimal stopping time for the problem started at $(t_1,x)$. Then $\tau_2=\tau^1_*\wedge(T-t_2)$ is admissible and sub-optimal for the problem started at $(t_2,x)$. This gives
\begin{align*}
v(t_1,x)-v(t_2,x)\le& \int_\cI\E_x\left[\ell^z_{\tau^1_*}-\ell^z_{\tau_2}\right] \mu(\ud z)\\
=&\int_\cI\E_x\left[\left(\ell^z_{\tau^1_*}-\ell^z_{T-t_2}\right)\ind_{\{\tau^1_*>T-t_2\}}\right] \mu(\ud z)\\
\le &\int_\cI\E_x\left[\left(\ell^z_{T-t_1}-\ell^z_{T-t_2}\right)\right] \mu^+(\ud z).
\end{align*} 
Now, using Lemma \ref{lem:loc-time} in the above expression, we obtain \eqref{eq:lip-v}, after an application of Fubini's theorem.
\end{proof}

\begin{remark}
The condition on the transition density $\hat p$ in \eqref{eq:boundTD}, is perhaps more neatly expressed in terms of the standard transition density (with respect to the Lebesgue measure), denoted here by $p(\,\cdot\,)$. Indeed we notice that since $S'(y)=1$ 
\[
\hat p(s,x,y)=\frac{\hat p(s,x,y)}{S'(y)}=\tfrac{1}{2}\sigma^2(y)p(s,x,y).
\]
For many known transition densities we have that $p$ is uniformly bounded as soon as $s\in[\eps,\infty)$ for some $\eps>0$. Moreover, it is often the case that $p(s,x,\,\cdot\,)$ has an exponential decay at infinity (when $\cI$ is unbounded) so that mild growth conditions on $\sigma^2(y)\mu^+(\ud y)$ will guarantee \eqref{eq:boundTD}. 
\end{remark}

\begin{remark}
It is worth mentioning that Lipschitz continuity in time of the value function was also proved by \cite{JLL} using scaling properties of Brownian motion (in particular that for $s\in[0,T-t]$ one has $B_s=\sqrt{T-t}B_u$, with $u=s/(T-t)$). However,
for the argument in \cite{JLL} some additional regularity on $g$ and $\sigma$ is needed (e.g., local Lipschitz continuity of both functions).
\end{remark}

\begin{theorem}[{\bf $C^1$ time regularity}]\label{thm:C1-time}
Let Assumption \ref{ass:cont-v} hold and let $r,\sigma\in C^\alpha_{\ell oc}(\cI)$ for some $\alpha\in(0,1)$. If \eqref{eq:boundTD} holds with a constant $\kappa=\kappa(t,x)>0$ which is uniform for $(t,x)$ on compact subsets of $[0,T)\times\cI$, then $\partial_t v\in C([0,T)\times\cI)$.
\end{theorem}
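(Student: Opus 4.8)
The plan is to exhibit the natural candidate for $\partial_t v$ and show it is continuous. Set $w:=\partial_t v$ on $\cC$ and $w:=0$ on $\cS$. On the open set $\cC$ the function $w$ is continuous and, by Corollary~\ref{cor:PDE}, equals $rv-\tfrac12\sigma^2\partial_{xx}v$ there; on $\mathrm{int}(\cS)$ it is identically zero and hence continuous, since $v\equiv g$ is time-homogeneous on $\cS$. Moreover, the hypothesis that \eqref{eq:boundTD} holds with $\kappa$ locally uniform, together with Proposition~\ref{prop:lip-t}, makes $t\mapsto v(t,x)$ locally Lipschitz with a constant that is locally uniform in $(t,x)$; thus $v(\,\cdot\,,x)$ is absolutely continuous and $v(t_2,x)-v(t_1,x)=\int_{t_1}^{t_2}\partial_t v(s,x)\,\ud s$ for $t_1\le t_2$ in $[0,T)$, with $\partial_t v(s,x)=w(s,x)$ for every $s$ except possibly $s=c(x)$. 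Hence, once we prove $w\in C([0,T)\times\cI)$, the fundamental theorem of calculus upgrades this identity to $\partial_t v\equiv w$ on all of $[0,T)\times\cI$ and the theorem follows. Since $w$ is already continuous on the two open pieces $\cC$ and $\mathrm{int}(\cS)$, whose union with $\partial\cC$ covers $[0,T)\times\cI$, the only point left to settle is continuity of $w$ at the points of $\partial\cC$.

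So fix $(t_0,x_0)\in\partial\cC$ (note $t_0<T$). By the time-monotonicity of $v$ one has $\partial\cC\subseteq\cS$, so $w\equiv 0$ there and it suffices to prove $\partial_t v(t_n,x_n)\to 0$ along any sequence $\cC\ni(t_n,x_n)\to(t_0,x_0)$ (sequences inside $\cS$ being trivial). The engine is the identity, valid for $(t,x)\in\cC$ and $0<h<t$,
\[
0\le v(t-h,x)-v(t,x)=\E_x\Big[\e^{-\int_0^{\tau^{t,x}_*}r(X_u)\ud u}\,u\big(t-h+\tau^{t,x}_*,\,X_{\tau^{t,x}_*}\big)\Big],
\]
obtained by stopping the martingale $s\mapsto \e^{-\int_0^s r(X_u)\ud u}v(t-h+s,X_s)$ at $\tau^{t,x}_*$ — licit because $\tau^{t,x}_*\le\tau^{t-h,x}_*$ by the privileged-direction monotonicity — and using $v(t+\tau^{t,x}_*,X_{\tau^{t,x}_*})=g(X_{\tau^{t,x}_*})$. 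Since $(t+\tau^{t,x}_*,X_{\tau^{t,x}_*})\in\cS$ we have $u(t+\tau^{t,x}_*,X_{\tau^{t,x}_*})=0$, so the random integrand is a time-increment of $u$ of length $h$, evaluated at $X_{\tau^{t,x}_*}$ and ending at time $t+\tau^{t,x}_*$; dividing by $h$ and letting $h\downarrow 0$ recovers $-\partial_t v(t,x)$ on the left.

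It then remains to show the right-hand side is small when $(t,x)$ is near $(t_0,x_0)$, and here I would localise. Fix a bounded interval $\cJ\ni x_0$ with $\overline\cJ\subset\cI$ and $\eta>0$ with $[t_0-\eta,\,t_0+\eta]\subset[0,T)$, and split the expectation according to the event $G:=\{\tau^{t,x}_*\le\eta\}\cap\{\sup_{0\le s\le\eta}|X_s-x|<\tfrac12\mathrm{dist}(x_0,\partial\cJ)\}$. On $G$ the point $X_{\tau^{t,x}_*}$ lies in the compact set $\overline\cJ$ and the times $t-h+\tau^{t,x}_*$, $t+\tau^{t,x}_*$ lie in a fixed compact subinterval of $[0,T)$, so the locally uniform time-Lipschitz bound of Proposition~\ref{prop:lip-t} (in its version at interior times, away from $T$) gives $u(t-h+\tau^{t,x}_*,X_{\tau^{t,x}_*})\le\Lambda h$ with $\Lambda=\Lambda(\cJ,\eta)$ finite; on $\Omega\setminus G$ the same quantity is estimated using \eqref{eq:bound-v}–\eqref{eq:integr-g} and the bound $u(t-h+\tau^{t,x}_*,\cdot)\le v(t-h+\tau^{t,x}_*,\cdot)-v(t+\tau^{t,x}_*,\cdot)$ together with Proposition~\ref{prop:lip-t}, while $\P_x(\Omega\setminus G)\to 0$ as $(t,x)\to(t_0,x_0)$ by Proposition~\ref{prop:reg-b} and standard small-time path estimates for $X$. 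Combining the two pieces, dividing by $h$, letting $h\downarrow 0$ and then $(t_n,x_n)\to(t_0,x_0)$ yields $\limsup_n\big(-\partial_t v(t_n,x_n)\big)\le 0$; since $-\partial_t v\ge 0$ on $\cC$, this gives $\partial_t v(t_n,x_n)\to 0=w(t_0,x_0)$, as desired.

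The main obstacle is precisely the control of the $\Omega\setminus G$ contribution in the last step: one needs it to be $o(h)$ after dividing by $h$, and this is delicate because $v$ is in general \emph{not} Lipschitz in time up to the terminal time $T$ (typically only $1/2$-H\"older there, as for at-the-money American options) nor near the atoms of $\mu^+$, so a naive bound on $u(t-h+\tau^{t,x}_*,X_{\tau^{t,x}_*})$ on that event need not be of order $h$. Getting around this requires exploiting fine structure of the problem: that $X_{\tau^{t,x}_*}$ has no atoms, so it almost surely avoids the at most countably many atoms of $\mu^+$; that on $\{\tau^{t,x}_*<T-t\}$ the argument $T-t-\tau^{t,x}_*$ appearing in the bound of Proposition~\ref{prop:lip-t} stays bounded away from $0$, restoring a factor $h$; and that positive atoms of $g''$ are, by the Lagrange formulation, forced into the interior of the continuation set — the very mechanism behind the continuation bays of Section~\ref{sec:bays} — so that $x_0\in\partial\cC$ with $t_0<T$ carries no positive atom of $g''$. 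Marrying these facts with the integrability hypotheses of Section~\ref{sec:setup} is the technical heart of the argument.
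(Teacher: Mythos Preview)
Your reduction to showing $\partial_t v(t_n,x_n)\to 0$ along sequences $(t_n,x_n)\in\cC$ converging to a boundary point is correct, and the identity
\[
v(t-h,x)-v(t,x)=\E_x\Big[\e^{-\int_0^{\tau^{t,x}_*}r(X_u)\ud u}\,u\big(t-h+\tau^{t,x}_*,X_{\tau^{t,x}_*}\big)\Big]
\]
is valid. The gap, however, is not only on $\Omega\setminus G$ as you flag: it is already on $G$. On $G$ your Lipschitz bound gives $u(t-h+\tau_*,X_{\tau_*})\le\Lambda h$, so after dividing by $h$ the $G$-contribution is bounded by $\Lambda$, a \emph{fixed} positive constant that does not vanish as $(t,x)\to(t_0,x_0)$. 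To conclude $\limsup_n(-\partial_t v(t_n,x_n))\le 0$ you would need $\tfrac{1}{h}u(t-h+\tau_*,X_{\tau_*})\to 0$ on $G$ as $h\downarrow 0$, i.e.\ $\partial_t v=0$ at the random boundary point $(t+\tau_*,X_{\tau_*})$ --- which is precisely the statement you are trying to prove. The argument is circular.

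The paper's device avoids both obstacles at once. Work with the \emph{forward} increment $v(t_n+\eps,x_n)-v(t_n,x_n)$ and stop not at $\tau_n=\tau_*^{t_n,x_n}$ but at $\tau_n\wedge\rho_n$, where $\rho_n=\inf\{s\ge 0:X^{x_n}_s\notin\cI_0\}\wedge s_0$ is a localising time ($\cI_0$ a fixed small interval around $x_0$, $s_0>0$ a free parameter). Using the martingale property for $v(t_n,\cdot)$ and the supermartingale property for $v(t_n+\eps,\cdot)$ one gets
\[
0\ge v(t_n+\eps,x_n)-v(t_n,x_n)\ge \E\Big[\e^{-\int_0^{\tau_n\wedge\rho_n}r}\big(v(t_n+\eps+\tau_n\wedge\rho_n,X_{\tau_n\wedge\rho_n})-v(t_n+\tau_n\wedge\rho_n,X_{\tau_n\wedge\rho_n})\big)\Big].
\]
Now the key point: on $\{\tau_n\le\rho_n\}$ the pair $(t_n+\tau_n,X_{\tau_n})$ lies in $\cS$, and by the time-monotonicity of $\cS$ so does $(t_n+\eps+\tau_n,X_{\tau_n})$; hence \emph{both} values equal $g(X_{\tau_n})$ and the integrand vanishes \emph{exactly}, not merely to order $\eps$. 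Only $\{\rho_n<\tau_n\}$ survives, and there $X_{\rho_n}\in\overline{\cI_0}$ with $t_n+\rho_n$ in a fixed compact of $[0,T)$, so the locally uniform Lipschitz constant applies and yields
\[
0\ge\partial_t v(t_n,x_n)\ge -\kappa_0\,\P(\rho_n<\tau_n).
\]
Finally $\P(\rho_n<\tau_n)\le\P(\tau_n>s_0)+\P(\rho_n<s_0)$; the first term tends to $0$ by Proposition~\ref{prop:reg-b} and the second is $O(s_0)$ uniformly in $n$ by a standard exit-time estimate. Sending $n\to\infty$ and then $s_0\downarrow 0$ concludes. The forward increment is what turns your ``$O(h)$ on the good event'' into an exact zero, and this is the missing idea.
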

\begin{proof}
For $(t,x)\in\mathrm{int}(\cS)$ we have $\partial_t v(t,x)=0$ and continuous (provided $\mathrm{int}(\cS)\neq\varnothing$). Corollary \ref{cor:PDE} guarantees that $\partial_t v$ is continuous in $\cC$ and therefore it remains to show that $\partial_t v$ is also continuous across the boundary $\partial\cC$. 

Fix $(t_0,x_0)\in\partial\cC$, with $t_0<T$, and take a sequence $(t_n,x_n)_{n\ge 1}\subset\cC$ such that $(t_n,x_n)\to (t_0,x_0)$ as $n\to \infty$. With no loss of generality we assume that $|x_n-x_0|\le \eta_0/2$ and $t_n<T-3\eps_0$ for all $n\ge 1$ and some $\eta_0\,,\eps_0>0$. Further, we denote $\cI_0:=(x_0-\eta_0,x_0+\eta_0)$. 

Next, let us derive an upper bound for $\partial_t v(t_n,x_n)$. Fix $n$ and take $\eps\in(0,\eps_0)$. 
Let $\tau_n=\tau_*^{t_n,x_n}$ be optimal for $v(t_n,x_n)$ and fix $s_0\in [0,\eps_0)$. Then $t_n+\eps_0+s_0<T-\eps_0$. Finally, set  
\[
\rho_n=\inf\{s\ge 0: X^{x_n}_s\notin \cI_0\}\wedge s_0.
\] 
By the (super)martingale property of $s\mapsto v(t_n+s,X^{x_n}_s)$ we have
\begin{align}\label{eq:vt0}
0\ge&\, v(t_n+\eps,x_n)-v(t_n,x_n)\\
\ge&\, \E_{x_n}\left[\e^{-\int_0^{\tau_n\wedge \rho_n}r(X_s)\ud s}\Big(v(t_n\!+\!\eps\!+\!\tau_n\wedge \rho_n,X_{\tau_n\wedge \rho_n})-v(t\!+\!\tau_n\wedge \rho_n,X_{\tau_n\wedge \rho_n})\Big)\right]\notag\\
=&\,\E_{x_n}\left[\ind_{\{\rho_n<\tau_n\}}\e^{-\int_0^{\rho_n}r(X_s)\ud s}\Big(v(t_n\!+\!\eps\!+\!\rho_n,X_{\rho_n})-v(t_n\!+\!\rho_n,X_{\rho_n})\Big)\right],\notag
\end{align}
where the final equality holds because $v(t_n\!+\!\eps\!+\!\tau_n,X_{\tau_n})=v(t_n\!+\!\tau_n,X_{\tau_n})=g(X_{\tau_n})$ on $\{\tau_n\le\rho_n\}$ by monotonicity of $t\mapsto v(t,x)$. Now, thanks to \eqref{eq:boundTD} we can find a constant $\kappa_0=\kappa(\cI_0,\eps_0)>0$, independent of $n$ and $s_0$, such that
\[
v(t_n\!+\!\eps\!+\!\rho_n,X_{\rho_n})-v(t_n\!+\!\rho_n,X_{\rho_n})\ge - \kappa_0\,\eps.
\]
Then, plugging the latter estimate into \eqref{eq:vt0}, recalling that $r\ge 0$, dividing by $\eps$ and letting $\eps\to 0$ we obtain
\begin{align}\label{eq:vt1}
0\ge \partial_t v(t_n,x_n)\ge -\kappa_0\P(\rho_n<\tau_n).
\end{align}

We are now interested in taking limits as $n\to\infty$ and showing that the right-hand side of \eqref{eq:vt1} goes to zero. First, let us rewrite 
\begin{align}\label{eq:vt1b}
\P(\rho_n<\tau_n)\le \P(\tau_n> s_0)+\P(\rho_n<s_0). 
\end{align}
From Proposition \ref{prop:reg-b} we know that $\P(\tau_n> s_0)\to 0$ as $n\to \infty$. We can estimate the second probability as follows. Define $\tilde\sigma$ as 
\begin{align*}
\tilde\sigma(x):=\left\{
\begin{array}{ll}
\sigma(x), & x\in \cI_0,\\[+4pt]
\sigma(x_0-\eta_0), & x\le x_0-\eta_0,\\[+4pt]
\sigma(x_0+\eta_0), & x\ge x_0+\eta_0, 
\end{array}
\right.
\end{align*}
along with the process $\widetilde X^n$ on $\R$, which is the unique (possibly weak) solution of 
\[
\ud \widetilde X^n_t=\tilde\sigma(\widetilde X^n_t)\ud B_t,\qquad \widetilde{X}^n_0=x_n.
\]
Existence of a unique in law, weak solution of the above SDE is guaranteed by Assumption \ref{ass:sigma1} and classical results (see \cite[Ch.\ 5.5]{KS}). 
By strong uniqueness of \eqref{eq:X} we also have $X^{x_n}_{t\wedge\rho_n}=\widetilde X^{n}_{t\wedge\tilde\rho_n}$ for all $t\ge 0$, $\P$-a.s., for $\tilde\rho_n=\inf\{t\ge 0: \widetilde X^{n}_t\notin\cI_0\}\wedge s_0$. 
Recall that $|x_0-x_n|<\eta_0/2$ for all $n\ge 1$. Therefore, using Markov inequality and Doob's martingale inequality we obtain 
\begin{align}\label{eq:lat}
\P\left(\rho_n< s_0\right)=&\P\left(\tilde\rho_n< s_0\right)\le \P\left(\sup_{0\le s\le s_0}\left|\int_0^s\tilde\sigma(\widetilde X^{n}_u)\ud B_u\right|\ge \frac{\eta_0}{2}\right)\\
\le& \frac{4}{\eta^2_0}\E\left[\sup_{0\le s\le s_0}\left|\int_0^s\tilde\sigma(\widetilde X^n_u)\ud B_u\right|^2\right]\le \frac{16}{\eta_0^2}\E\left[\int_0^{s_0}\tilde\sigma^2(\widetilde X^n_u)\ud u\right]\notag\\
\le & \frac{16}{\eta_0^2}\, s_0\,\sup_{x\in\cI_0}|\sigma(x)|=:\theta_0\, s_0,\notag
\end{align}
where the last inequality uses that $\sup_{x\in\R}|\tilde\sigma(x)|=\sup_{x\in\cI_0}|\sigma(x)|$ by construction.

Finally, using \eqref{eq:vt1b}, \eqref{eq:lat} and Proposition \ref{prop:reg-b} in \eqref{eq:vt1} we obtain
\[
0\ge \lim_{n\to\infty}\partial_t v(t_n,x_n)\ge - s_0\, \kappa_0\,\theta_0.
\]
Since $s_0>0$ can be taken arbitrarily small, this concludes the proof. 
\end{proof}

Remarkably, the time derivative is continuous irrespective of the regularity of the function $g$. This is in line with \cite{DeAPe}, but a direct application of results therein is not straightforward due to the lack of smoothness of $g$.

\begin{remark}\label{rem:C1}
The H\"older-continuity assumption on $\sigma$ and $r$ is only needed to guarantee that $\partial_t v$ is continuous in $\cC$ by Corollary \ref{cor:PDE}. Thanks to Remark \ref{rem:loc-reg} we can state a local version of Theorem \ref{thm:C1-time} only requiring that $r,\sigma\in C^\alpha_{\ell oc}(\cJ)$ for some open subset $\cJ\subset \cI$. Under such assumption we obtain $\partial_t v\in C([0,T)\times\cJ)$.
\end{remark}
Continuity of $\partial_t v$ has important consequences for the spatial regularity of the value function as well. For $\alpha\in(0,1)$ we denote $C^\alpha(\overline {\cJ})$ the class of $\alpha$-H\"older continuous functions on the closure of a set $\cJ$.
\begin{corollary}\label{cor:C1-space}
Let Assumption \ref{ass:cont-v} hold and let $r,\sigma\in C^\alpha\big(\overline{\cJ}\big)$ for some $\alpha\in(0,1)$, with $\cJ\subset \cI$ open and $\overline{\cJ}\subset\cI$. If \eqref{eq:boundTD} holds with a constant $\kappa=\kappa(t,x)>0$ which is uniform for $(t,x)$ on compacts subsets of $[0,T)\times\cI$, then $\partial_{xx} v$ admits a unique continuous extension to $\overline{\cC\cap([0,T-\delta]\times\cJ)}$ for any $\delta>0$.
\end{corollary}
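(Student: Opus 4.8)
I would obtain the result by \emph{solving the interior parabolic equation for $\partial_{xx}v$} and propagating continuity from the already-established continuity of $\partial_t v$. Since $r,\sigma\in C^\alpha(\overline{\cJ})\subset C^\alpha_{\ell oc}(\cJ)$, Corollary \ref{cor:PDE} together with Remark \ref{rem:loc-reg} gives $v\in C^{1,2}(\cC_\cJ)$ and, rearranging \eqref{eq:PDEv},
\begin{equation*}
\partial_{xx}v(t,x)=\frac{2}{\sigma^2(x)}\Big(r(x)v(t,x)-\partial_t v(t,x)\Big),\qquad (t,x)\in\cC_\cJ .
\end{equation*}
So the whole statement amounts to observing that the right-hand side, which a priori is defined as $\partial_{xx}v$ only on the open set $\cC_\cJ$, is in fact the restriction to $\cC_\cJ$ of a function continuous on a neighbourhood of the compact-in-time set $\overline{\cC\cap([0,T-\delta]\times\cJ)}$.

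The key step is therefore to check that $w(t,x):=\tfrac{2}{\sigma^2(x)}\big(r(x)v(t,x)-\partial_t v(t,x)\big)$ is continuous on $[0,T)\times\cJ$. This follows by assembling the available inputs: $v$ is continuous by Assumption \ref{ass:cont-v}; $\partial_t v\in C([0,T)\times\cJ)$ by the local version of Theorem \ref{thm:C1-time} recorded in Remark \ref{rem:C1}, whose hypotheses hold here precisely because $r,\sigma\in C^\alpha_{\ell oc}(\cJ)$ and \eqref{eq:boundTD} is assumed uniform on compact subsets of $[0,T)\times\cI$; and $r,\sigma$ are continuous with $\sigma>0$ on $\cI$ by Assumption \ref{ass:sigma1}, so the denominator never vanishes. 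Since moreover $r,\sigma$ extend continuously to $\overline{\cJ}$ with $\sigma$ bounded below on compact subsets of $\overline{\cJ}\subset\cI$, the function $w$ extends continuously to $[0,T-\delta]\times\overline{\cJ}$; for the $\partial_t v$ term at points of $\partial\cJ$ one uses a routine localisation (apply Remark \ref{rem:C1} with $\cJ$ slightly enlarged, or note that no point of $\overline{\cC\cap([0,T-\delta]\times\cJ)}\subset\overline{\cC}$ can lie in $\mathrm{int}(\cS)$, so every such boundary point lies in $\cC$ or on $\partial\cC$ and is covered by the local $C^{1,2}$/time regularity). Finally I would conclude: on $\cC\cap([0,T-\delta]\times\cJ)\subseteq\cC_\cJ$ we have $\partial_{xx}v=w$, hence the restriction of $w$ to $\overline{\cC\cap([0,T-\delta]\times\cJ)}$ is a continuous extension of $\partial_{xx}v$; uniqueness is immediate because $\cC\cap([0,T-\delta]\times\cJ)$ is dense in its closure and two continuous functions agreeing on a dense set coincide.

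The main (and essentially only) substantive ingredient is the input $\partial_t v\in C([0,T)\times\cJ)$ coming from Theorem \ref{thm:C1-time} / Remark \ref{rem:C1}: it is exactly this $C^1$-in-time regularity that makes the explicit formula for $\partial_{xx}v$ continuous \emph{across the free boundary} $\partial\cC$, where no a priori spatial regularity of $v$ is available. Once that is granted, the rest is an algebraic consequence of the interior equation \eqref{eq:PDEv}, and the only bookkeeping is the treatment of the spatial boundary $\partial\cJ$, which is handled by the standard shrinking/patching argument indicated above and requires no new idea.
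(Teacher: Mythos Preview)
Your proposal is correct and follows essentially the same approach as the paper: the paper's proof is a single line stating that continuity of $\partial_{xx}v$ on $\overline{\cC\cap([0,T-\delta]\times\cJ)}$ follows directly from \eqref{eq:PDEv} together with continuity of $\partial_t v$ and $v$ on $[0,T)\times\overline{\cJ}$, which is exactly your argument of rearranging the interior equation as $\partial_{xx}v=\tfrac{2}{\sigma^2}(rv-\partial_t v)$ and invoking Theorem~\ref{thm:C1-time}/Remark~\ref{rem:C1}. Your added remarks on uniqueness and on handling points of $\partial\cJ$ simply make explicit what the paper leaves implicit.
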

\begin{proof}
Continuity of $\partial_{xx}v$ on $\overline{\cC\cap([0,T-\delta]\times\cJ)}$ follows directly from \eqref{eq:PDEv} and continuity of both $\partial_t v$ and $v$ on $[0,T)\times\overline{\cJ}$.
\end{proof}

\section{Continuation bays and stopping spikes}\label{sec:bays}

In this section we begin the study of the fine geometric properties of the optimal boundary $\partial\cC$. In contrast with the case of a smooth gain function, i.e., $g\in C^2(\cI)$, in this section we show that the possible presence of atoms in the measure $\mu(\ud x)$ produces effects that cannot be observed in the more regular cases. These will be illustrated in Example \ref{ex:bays} and \ref{ex:spikes} below.

By Hahn-Jordan decomposition \cite[Ch.\ VI, Sec.\ 29]{Hal} we can find two measurable sets $\Lambda_+$ and $\Lambda_-=\cI\setminus\Lambda_+$ such that $\mu^+(E)=\mu(\Lambda_+\cap E)\ge 0$ and $\mu^-(E)=-\mu(\Lambda_-\cap E)\ge 0$ for any measurable set $E$.
It is somewhat expected that the stopping set should lie in $[0,T]\times\Lambda_-$, where accumulating local time in the formulation \eqref{eq:u} is costly. This result is known to hold when $g\in C^2(\cI)$ and below we present some extensions to our setting. 

We are going to need the next lemma.
\begin{lemma}\label{lem:tech}
Fix $(t,x)\in[0,T)\times\cI$ and $\eps>0$. Let $K\supset(x-\eps,x+\eps)$ be a compact, define $\overline \sigma_K=\sup_{y\in K }|\sigma(y)|$ and let 
\[
\tau_\eps=\inf\big\{s\ge 0:X_s\notin(x-\eps,x+\eps)\big\}\wedge(T-t).
\] 
Then, for any $\delta>0$ and $z\in(x-\eps,x+\eps)$ we have
$\E_{x}\big[\ell^z_{\delta\wedge\tau_\eps}\big]\le \overline\sigma_K\sqrt{\delta}$. 

Moreover, setting $\bar r=\sup_{y\in K}|r(y)|$ we also obtain
\begin{align}\label{eq:liminflt}
\liminf_{\delta\to 0}\delta^{-1/2}\E_{x}\big[\ell^{x}_{\delta\wedge\tau_\eps}\big]\ge \frac{1}{\sqrt{2\pi}}\underline \sigma_K\e^{-\bar r T}.
\end{align}
\end{lemma}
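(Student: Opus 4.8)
The plan is to estimate the expected discounted local time from above and below by comparing with the local time of a (time-changed) Brownian motion, exploiting Assumption \ref{ass:sigma1} to control the diffusion coefficient on the compact $K$.

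\textbf{Upper bound.}
First I would drop the discounting, using $r\ge 0$, so that $\E_x[\ell^z_{\delta\wedge\tau_\eps}]\le \E_x[L^z_{\delta\wedge\tau_\eps}]$. Then I would apply the occupation-time formula for semimartingale local time: $\int_\R L^y_{\delta\wedge\tau_\eps}\,f(y)\,\ud y=\int_0^{\delta\wedge\tau_\eps}f(X_s)\sigma^2(X_s)\,\ud s$ for nonnegative Borel $f$; equivalently I would use Tanaka's formula $|X_{\delta\wedge\tau_\eps}-z|-|x-z|=\int_0^{\delta\wedge\tau_\eps}\mathrm{sgn}(X_s-z)\,\ud X_s+L^z_{\delta\wedge\tau_\eps}$. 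Taking expectations kills the martingale term (the stochastic integral is a true martingale on $[0,\delta\wedge\tau_\eps]$ since $X$ stays in the compact $(x-\eps,x+\eps)$), leaving
\[
\E_x\big[L^z_{\delta\wedge\tau_\eps}\big]=\E_x\big[|X_{\delta\wedge\tau_\eps}-z|\big]-|x-z|\le \E_x\big[|X_{\delta\wedge\tau_\eps}-x|\big],
\]
where the last step uses $|X_{\delta\wedge\tau_\eps}-z|-|x-z|\le|X_{\delta\wedge\tau_\eps}-x|$ (triangle inequality). Finally, by the Burkholder–Davis–Gundy-free estimate $\E_x[|X_{\delta\wedge\tau_\eps}-x|]\le \big(\E_x[\langle X\rangle_{\delta\wedge\tau_\eps}]\big)^{1/2}=\big(\E_x[\int_0^{\delta\wedge\tau_\eps}\sigma^2(X_s)\,\ud s]\big)^{1/2}\le\overline\sigma_K\sqrt{\delta}$, which gives the claimed bound.

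\textbf{Lower bound.}
This is the more delicate part, and I expect it to be the main obstacle. The idea is to pass, via Dambis–Dubins–Schwarz (as already done in the proof of Proposition \ref{prop:reg-b}), to a Brownian motion $W$ with $X_{\delta\wedge\tau_\eps}-x$ related to $W$ run on the time-changed clock $\langle X\rangle$, and to use that for $s$ small the local time at the starting point of a Brownian motion started there is of order $\sqrt{s}$; precisely $\E[L^0_s(W)]=\sqrt{2s/\pi}$. One way: apply Tanaka/occupation formula again to get $\E_x[L^x_{\delta\wedge\tau_\eps}]=\E_x[|X_{\delta\wedge\tau_\eps}-x|]$ (exactly, since the midpoint $z=x$), and then bound $\E_x[|X_{\delta\wedge\tau_\eps}-x|]$ from below. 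For that I would first discard the discount factor carefully: since $r$ is bounded by $\bar r$ on $K$ and $\tau_\eps\le T-t\le T$, we have $\ell^x_{\delta\wedge\tau_\eps}\ge \e^{-\bar r T}L^x_{\delta\wedge\tau_\eps}$, so $\E_x[\ell^x_{\delta\wedge\tau_\eps}]\ge \e^{-\bar r T}\E_x[L^x_{\delta\wedge\tau_\eps}]=\e^{-\bar r T}\E_x[|X_{\delta\wedge\tau_\eps}-x|]$. Then I would show
\[
\liminf_{\delta\to0}\delta^{-1/2}\,\E_x\big[|X_{\delta\wedge\tau_\eps}-x|\big]\ge \frac{\underline\sigma_K\sqrt{2}}{\sqrt{\pi}},
\]
which combined with $\sqrt{2/\pi}\cdot\e^{-\bar r T}=\frac{\sqrt 2}{\sqrt\pi}\e^{-\bar r T}$ recovers \eqref{eq:liminflt} up to matching constants (note $\frac{1}{\sqrt{2\pi}}=\frac{1}{\sqrt2}\cdot\frac{1}{\sqrt\pi}$, so in fact one wants the cleaner intermediate bound $\liminf \delta^{-1/2}\E_x[L^x_{\delta\wedge\tau_\eps}]\ge \underline\sigma_K/\sqrt{2\pi}$). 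To get this, use DDS: $X_{s}-x=W_{\langle X\rangle_s}$ for $s\le\tau_\eps$, and on the event $\{\tau_\eps>\delta\}$ one has $\langle X\rangle_\delta=\int_0^\delta\sigma^2(X_u)\,\ud u\ge \underline\sigma_K^2\,\delta$. Hence, writing $A_\delta=\langle X\rangle_{\delta\wedge\tau_\eps}$,
\[
\E_x\big[|X_{\delta\wedge\tau_\eps}-x|\big]=\E\big[|W_{A_\delta}|\big]\ge \E\big[|W_{A_\delta}|\ind_{\{\tau_\eps>\delta\}}\big].
\]
On $\{\tau_\eps>\delta\}$, $A_\delta\ge\underline\sigma_K^2\delta$, and by the strong Markov property / reflection one can bound $\E[|W_{A_\delta}|\,|\,\mathcal G]$ below in terms of $\sqrt{A_\delta}$; together with $\P_x(\tau_\eps>\delta)\to1$ as $\delta\to0$ (continuity of paths, $x$ interior to $(x-\eps,x+\eps)$) and $\E|W_s|=\sqrt{2s/\pi}$, a careful application of these gives $\liminf_{\delta\to0}\delta^{-1/2}\E_x[|W_{A_\delta}|]\ge\underline\sigma_K\sqrt{2/\pi}$.

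\textbf{Main obstacle.}
The delicate point in the lower bound is that $A_\delta$ is a random clock that is not independent of $W$, so one cannot simply write $\E[|W_{A_\delta}|]=\E[\sqrt{2A_\delta/\pi}]$. I would handle this by conditioning on the filtration at the relevant stopping time (or using that $W$ on $[0,A_\delta]$ is a Brownian motion and $A_\delta$ is a stopping time for its natural filtration), and invoking a one-sided estimate: for a Brownian motion $W$ and a stopping time $S\ge s_0$ a.s. on some event, $\E[|W_S|\ind_E]\ge\E[\E[|W_{s_0}|\,|\,\mathcal F_{\text{something}}]\ind_E]$ is not literally monotone, so instead I would use the cleaner route of lower-bounding $\E[L^x_{\delta\wedge\tau_\eps}]=\E[|W_{A_\delta}|]$ directly via the inequality $\E[|W_S|]\ge \sqrt{\E[S]}\cdot c$ when $S$ is "comparable to deterministic," or — simplest — first replace $\tau_\eps$ by a deterministic time on the high-probability event $\{\tau_\eps>\delta\}$: there $X_{\delta\wedge\tau_\eps}=X_\delta$ and $\langle X\rangle_\delta\ge\underline\sigma_K^2\delta$, so $\E_x[|X_\delta-x|\ind_{\{\tau_\eps>\delta\}}]$ can be estimated by comparing the law of $X_\delta$ (started at $x$, before exit) to that of Brownian motion with variance $\ge\underline\sigma_K^2\delta$ via the transition density bound and the continuity of $\hat p$ from Section \ref{sec:set1}. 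I would expect the referee-proof argument to combine the occupation-formula identity $\E_x[L^x_{\delta\wedge\tau_\eps}]=\E_x[|X_{\delta\wedge\tau_\eps}-x|]$ with a short comparison-of-laws / Skorokhod-type lower bound on $\E_x[|X_{\delta\wedge\tau_\eps}-x|]$ using $\sigma\ge\underline\sigma_K$ on $K$, and conclude by $\P_x(\tau_\eps>\delta)\to1$.
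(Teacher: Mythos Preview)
Your upper bound is essentially identical to the paper's: drop the discount via $r\ge 0$, apply It\^o--Tanaka, use the triangle inequality, then Jensen and It\^o isometry to reach $\overline\sigma_K\sqrt{\delta}$.

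For the lower bound you set things up exactly as the paper does (the bound $\ell^x\ge \e^{-\bar r T}L^x$, the identity $\E_x[L^x_{\delta\wedge\tau_\eps}]=\E_x[|X_{\delta\wedge\tau_\eps}-x|]$, and the DDS time change $X_{\delta\wedge\tau_\eps}-x=W_{A_\delta}$ with $A_\delta=\langle X\rangle_{\delta\wedge\tau_\eps}$), and you correctly diagnose the obstacle: $A_\delta$ is a random clock correlated with $W$, and $s\mapsto|W_s|$ is not monotone. But you do not resolve it; the proposed workarounds (conditioning, comparison of laws, transition-density bounds) remain vague.

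The missing idea, which is precisely what the paper uses, is to apply Tanaka \emph{a second time}, now to $|W|$. Since $A_\delta\le \overline\sigma_K^2\delta$ is a bounded stopping time for the filtration of $W$, optional sampling gives
\[
\E_x\big[|W_{A_\delta}|\big]=\E_x\big[\widetilde L^0_{A_\delta}\big],
\]
where $\widetilde L^0$ is the local time of $W$ at zero. Unlike $|W|$, the process $s\mapsto\widetilde L^0_s$ is pathwise nondecreasing, so from $A_\delta\ge \underline\sigma_K^2(\delta\wedge\tau_\eps)$ one gets
\[
\E_x\big[\widetilde L^0_{A_\delta}\big]\ge \E_x\big[\widetilde L^0_{\underline\sigma_K^2(\delta\wedge\tau_\eps)}\big]
= \E_x\big[\widetilde L^0_{\underline\sigma_K^2\delta}\big]
-\E_x\big[\ind_{\{\tau_\eps<\delta\}}\big(\widetilde L^0_{\underline\sigma_K^2\delta}-\widetilde L^0_{\underline\sigma_K^2\tau_\eps}\big)\big].
\]
Now $\widetilde L^0_{\underline\sigma_K^2\delta}$ has the law of $\sqrt{\underline\sigma_K^2\delta}\,|W_1|$ (L\'evy's theorem), so the main term equals $\underline\sigma_K\sqrt{\delta}\,\E|W_1|$; the error term is controlled by Cauchy--Schwarz as $\underline\sigma_K\sqrt{\delta}\,\sqrt{\P_x(\tau_\eps<\delta)}$, which is $o(\sqrt{\delta})$ since $\P_x(\tau_\eps>0)=1$. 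This yields
\[
\liminf_{\delta\to0}\delta^{-1/2}\E_x\big[\ell^x_{\delta\wedge\tau_\eps}\big]
\ge \e^{-\bar r T}\underline\sigma_K\,\E|W_1|\cdot\tfrac{1}{2}
=\tfrac{1}{\sqrt{2\pi}}\,\underline\sigma_K\,\e^{-\bar r T},
\]
where the factor $\tfrac12$ comes from absorbing the Cauchy--Schwarz correction for $\delta$ small. In short: the trick that closes your gap is to trade $|W_{A_\delta}|$ for $\widetilde L^0_{A_\delta}$, because local time, unlike $|W|$, is monotone in the clock.
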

\begin{proof}
Let us set $\cI_\eps:=(x-\eps,x+\eps)$. Since $r\ge 0$, then $\E_{x}\left[ \ell^z_{\tau_\eps\wedge \delta}\right]\le \E_{x}\left[ L^z_{\tau_\eps\wedge \delta}\right]$.
Recalling Assumption \ref{ass:sigma1} and applying It\^o-Tanaka formula, triangular inequality and Jensen's inequality we easily obtain
\begin{align}\label{eq:loc1}
\E_{x}\left[ L^z_{\tau_\eps\wedge \delta}\right]
=&\E_{x}\bigg[\bigg|x-z+\int_0^{\tau_\eps\wedge \delta}\sigma(X_t)\ud B_t\bigg|-|x-z|\bigg]\\
\le& \E_{x}\bigg[\bigg|\int_0^{\tau_\eps\wedge \delta}\sigma(X_t)\ud B_t\bigg|\bigg]\le \bigg(\E_{x}\bigg[\bigg|\int_0^{\tau_\eps\wedge \delta}\sigma(X_t)\ud B_t\bigg|^2\bigg]\bigg)^{\frac{1}{2}}\notag\\
=&\bigg(\E_{x}\bigg[\int_0^{\tau_\eps\wedge \delta}\big|\sigma(X_t)\big|^2\ud t\bigg]\bigg)^{\frac{1}{2}}\le \overline{\sigma}_K\left(\E_{x}[\tau_\eps\wedge \delta]\right)^{\frac{1}{2}}\le \overline{\sigma}_K\sqrt{\delta},\notag
\end{align}
where $\overline{\sigma}_K=\sup_{y\in K}|\sigma(y)|$, with $K\subset \cI$ a compact set that contains $\cI_\eps$.

For \eqref{eq:liminflt} we repeat steps similar to those in a proof given in \cite[Lemma 15]{Pe19}, being careful about the various constants cropping up in our case. Denote
\[
M_t=\int_0^t\sigma(X_s)\ud B_s,\quad\text{and}\quad \langle M\rangle_{t}=\int_0^{t}\sigma^2(X_s)\ud s
\]
and notice that $M_{t\wedge\tau_\eps}=W_{\langle M\rangle_{t\wedge\tau_\eps}}$ by Dambis-Dubins-Schwarz theorem, where $W$ is another Brownian motion (analogous to \eqref{eq:mJ}). By continuity of $r$ we have 
\[
\int_0^{\tau_\eps}r(X_t)\ud t\le T\cdot\bar r
\] 
with $\bar r=\sup_{x\in K}r(x)$ and $K\subset \cI$ as above.  
Then, using It\^o-Tanaka's formula as in \eqref{eq:loc1} with $z=x$ we also have
\begin{align*}
\E_{x}\left[ \ell^{x}_{\tau_\eps\wedge \delta}\right]\ge \e^{-T\bar r}\E_{x}\left[ L^{x}_{\tau_\eps\wedge \delta}\right]=\e^{-T\bar r}\E_{x}|M_{\tau_\eps\wedge\delta}|=\e^{-T\bar r}\E_{x}|W_{\langle M\rangle_{\tau_\eps\wedge\delta}}|.
\end{align*}
Letting $\widetilde L^0$ denote the local-time at zero of the Brownian motion $W$, a further application of It\^o-Tanaka's formula and optional sampling gives (notice that $\langle M\rangle_{\tau_\eps\wedge\delta}\le \overline\sigma^2_K\delta$ by Assumption \ref{ass:sigma1})
\[
\E_{x}\big|W_{\langle M\rangle_{\tau_\eps\wedge\delta}}\big|=\E_{x}\big[\widetilde L^0_{\langle M\rangle_{\tau_\eps\wedge\delta}}\big]\ge \E_{x}\big[\widetilde L^0_{\underline \sigma^2_K(\tau_\eps\wedge\delta)}\big],
\]
where the final inequality uses that $\underline \sigma_K=\inf_{x\in K}\sigma(x)>0$ (Assumption \ref{ass:sigma1}) and monotonicity of the local time. Notice that the notation $\E_{x}[\,\cdot\,]=\E[\,\cdot\,|X_0=x]$ keeps track of the fact that $\tau_\eps=\tau_\eps^{x}=\inf\{s\ge 0: X^{x}_s\notin\cI_{\eps}\}\wedge(T-t)$ and that the Brownian motion $W=W^{x}$, obtained via time-change, also depends on $x$ through the quadratic variation $\langle M^{x}\rangle$.
Next we proceed with simple estimates:
\begin{align}\label{eq:lt0}
\E_{x}\left[\widetilde L^0_{\underline \sigma^2_K(\tau_\eps\wedge\delta)}\right]=&\,\E_{x}\left[\ind_{\{\tau_\eps\ge\delta\}}\widetilde L^0_{\underline \sigma^2_K\,\delta}+\ind_{\{\tau_\eps< \delta\}}\widetilde L^0_{\underline \sigma^2_K\tau_\eps}\right]\\
= &\,\E_{x}\left[\widetilde L^0_{\underline \sigma^2_K\,\delta}-\ind_{\{\tau_\eps< \delta\}}\Big(\widetilde L^0_{\underline \sigma^2_K\,\delta}-\widetilde L^0_{\underline \sigma^2_K\tau_\eps}\Big)\right]\notag\\
\ge &\,\E_{x}\left[\widetilde L^0_{\underline \sigma^2_K\,\delta}-\ind_{\{\tau_\eps< \delta\}}\widetilde L^0_{\underline \sigma^2_K\,\delta}\right].\notag
\end{align}
It is well-known that the next chain of equalities holds in law under $\P_{x}$:
\[
\widetilde L^0_{\underline\sigma^2_K\,\delta}=\sup_{0\le s\le\, \underline{\sigma}^2_K\delta}W_s=|W_{\underline{\sigma}^2_K\delta}|=\sqrt{\underline{\sigma}^2_K\delta}|W_1|.
\] 
Then, we have also
\begin{align*}
\E_{x}\left[\ind_{\{\tau_\eps<\delta\}}\widetilde L^0_{\underline\sigma^2_K\,\delta}\right]\le&\left(\E_{x}\left[\big(\widetilde L^0_{\underline\sigma^2_K\,\delta}\big)^2\right]\right)^{\frac{1}{2}}\sqrt{\P_{x}(\tau_\eps<\delta)}=\sqrt{\underline\sigma_K^2\delta}\sqrt{\P_{x}(\tau_\eps<\delta)},
\end{align*}
upon noticing that $\E_x|W_1|^2=1$ since the law of $W_1$ is independent of $x$.

Thus, we obtain from \eqref{eq:lt0} and the discussion above
\begin{align}\label{eq:loc2}
\E_{x}\left[ \ell^{x}_{\tau_\eps\wedge \delta}\right]\ge \sqrt{\underline{\sigma}^2_K\delta}\left(\E\left[|W_1|\right]-\sqrt{\P_{x}(\tau_\eps<\delta)}\right)\e^{-T\bar r}.
\end{align}
Since $\P_{x}(\tau_{\eps}>0)=1$, then we can find $\delta_0>0$ sufficiently small, so that 
\[
\E_{x}\left[ \ell^{x}_{\tau_\eps\wedge \delta}\right]\ge \tfrac{1}{2}\sqrt{\underline{\sigma}^2_K\delta}\E\left[|W_1|\right]\e^{-T\bar r}, \quad \text{for $\delta\in(0,\delta_0)$}.
\]
Then, \eqref{eq:liminflt} follows upon recalling also $\E[|W_1|]=\sqrt{2/\pi}$.
\end{proof}

As immediate consequence of the lemma we have the next result.
\begin{proposition}\label{prop:bay}
Let $x_0\in\cI$ be such that $\mu(\{x_0\})>0$. Then $[0,T)\times \{x_0\}\subset\cC$.
\end{proposition}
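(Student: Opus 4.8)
The plan is to fix $t\in[0,T)$ and prove $u(t,x_0)>0$, where $u$ is the Lagrange value from \eqref{eq:u}; since $\cC=\{(s,y):u(s,y)>0\}$ and $t$ is arbitrary, this gives $[0,T)\times\{x_0\}\subset\cC$. By \eqref{eq:u} it is enough to exhibit one admissible stopping time $\tau\le T-t$ with $\E_{x_0}\big[\int_\cI\ell^z_\tau\,\mu(\ud z)\big]>0$. The candidate is the ``small local-time clock'' $\tau=\delta\wedge\tau_\eps$, where $\tau_\eps=\inf\{s\ge0:X_s\notin(x_0-\eps,x_0+\eps)\}\wedge(T-t)$ is the stopping time of Lemma~\ref{lem:tech}, with $\eps$ and then $\delta$ chosen small (in that order, which is the crux of the argument).

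Two preliminary facts would be recorded first. (i) Because $\mu^+$ and $\mu^-$ are concentrated on the disjoint Hahn--Jordan sets $\Lambda_+,\Lambda_-$, the hypothesis $\mu(\{x_0\})>0$ forces $x_0\in\Lambda_+$, hence $\mu^-(\{x_0\})=0$ and $\mu^+(\{x_0\})=\mu(\{x_0\})$. Since $\mu=g''(\ud z)-2\sigma^{-2}(z)r(z)g(z)\,\ud z$ is a locally finite signed measure, $\mu^-$ is locally finite and therefore $\mu^-([x_0-\eps,x_0+\eps])\downarrow\mu^-(\{x_0\})=0$ as $\eps\downarrow0$. (ii) I fix once and for all a compact $K\subset\cI$ containing a neighbourhood of $x_0$ and set $c_0:=\tfrac{1}{\sqrt{2\pi}}\,\underline\sigma_K\,\e^{-\bar r T}>0$ and $c_1:=\overline\sigma_K$ in the notation of Lemma~\ref{lem:tech}; these constants are independent of $\eps$ for all small $\eps$.

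The core estimate is then as follows. Up to time $\tau_\eps$ the path $X$ stays in $[x_0-\eps,x_0+\eps]$, so $\ell^z_\tau=0$ for $z$ outside this interval. Splitting $\mu=\mu^+-\mu^-$, bounding the $\mu^+$-integral below by the atom at $x_0$ (and using $\mu^+(\{x_0\})=\mu(\{x_0\})$ from (i)), and applying Tonelli's theorem (legitimate as each $\ell^z\ge0$), one obtains
\[
\E_{x_0}\Big[\int_\cI\ell^z_\tau\,\mu(\ud z)\Big]\ \ge\ \mu(\{x_0\})\,\E_{x_0}\big[\ell^{x_0}_\tau\big]\ -\ \int_{[x_0-\eps,x_0+\eps]}\E_{x_0}\big[\ell^z_\tau\big]\,\mu^-(\ud z).
\]
Choosing $\eps$ outside the at most countably many atoms of $\mu^-$ (so the open and closed $\eps$-intervals carry the same $\mu^-$-mass), Lemma~\ref{lem:tech} bounds the subtracted term by $c_1\sqrt\delta\,\mu^-([x_0-\eps,x_0+\eps])$ and gives $\liminf_{\delta\to0}\delta^{-1/2}\E_{x_0}[\ell^{x_0}_\tau]\ge c_0$. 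Dividing by $\sqrt\delta$ and using $\liminf(a-b)\ge\liminf a-\limsup b$ yields
\[
\liminf_{\delta\to0}\ \delta^{-1/2}\,\E_{x_0}\Big[\int_\cI\ell^z_\tau\,\mu(\ud z)\Big]\ \ge\ \mu(\{x_0\})\,c_0\ -\ c_1\,\mu^-([x_0-\eps,x_0+\eps]).
\]
By (i) we may now fix $\eps$ small enough that the right-hand side is strictly positive; then for all sufficiently small $\delta>0$ the expectation on the left is strictly positive, hence $u(t,x_0)>0$.

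I expect the only delicate point to be exactly this ordering of limits: the negative mass of $\mu$ accumulated near $x_0$ must first be rendered negligible against the \emph{fixed} gain $\mu(\{x_0\})c_0$ by shrinking $\eps$ — this is where $\mu^-(\{x_0\})=0$ is used — and only afterwards may $\delta\to0$; freezing the compact $K$ in advance is what keeps $c_0,c_1$ free of $\eps$. Everything else is a direct combination of the Lagrange representation \eqref{eq:u} with the upper and lower local-time bounds of Lemma~\ref{lem:tech}.
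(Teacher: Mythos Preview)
Your argument is correct and follows essentially the same route as the paper's proof: the same admissible stopping time $\tau=\delta\wedge\tau_\eps$, the same observation $\mu^-(\{x_0\})=0$, and the same two estimates from Lemma~\ref{lem:tech}, with $\eps$ fixed first (to make the $\mu^-$-mass small) and then $\delta$ taken small. The only cosmetic differences are that you phrase the final step via $\liminf_{\delta\to0}\delta^{-1/2}$ rather than directly picking $\delta_1$, and you work with the closed interval $[x_0-\eps,x_0+\eps]$ (handling endpoint atoms of $\mu^-$ by choice of $\eps$) where the paper simply uses the open interval $\cI_\eps\setminus\{x_0\}$.
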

\begin{proof}
Fix $t\in[0,T)$. Set $\theta:=\mu(\{x_0\})>0$ and, for arbitrary $\eps>0$, denote $\cI_\eps=(x_0-\eps,x_0+\eps)$ and $\tau_\eps=\inf\{s\ge 0: X_s\notin\cI_\eps\}$. Notice that $\ell^z_{\tau_{\eps}}=0$, $\P_{x_0}$-a.s.\ for $z\notin\cI_\eps$. Notice also that $x_0\in\Lambda_+$ and therefore $\mu^-(\{x_0\})=0$. Since $\tau_\eps\wedge\delta$ is admissible for any $\delta\in(0,T-t]$, then we have
\begin{align}\label{eq:loc0}
u(t,x_0)\ge& \E_{x_0}\left[\tfrac{1}{2}\int_{\cI_\eps} \ell^z_{\tau_\eps\wedge \delta} \mu(\ud z)\right]\\
\ge& \tfrac{1}{2}\E_{x_0}\left[\left(\theta\, \ell^{x_0}_{\tau_\eps\wedge \delta}-\int_{\cI_\eps\setminus\{x_0\}} \ell^z_{\tau_\eps\wedge \delta} \mu^-(\ud z)\right)\right]\notag
\end{align}
From the first claim in Lemma \ref{lem:tech} we have
\begin{align}\label{eq:atom}
u(t,x_0)\ge \tfrac{1}{2}\theta\E_{x_0}[\ell^{x_0}_{\tau_\eps\wedge\delta}]-\tfrac{1}{2}\overline{\sigma}_K\mu^-(\cI_\eps\setminus\{x_0\})\sqrt{\delta},
\end{align}
where we notice that the compact $K$ can be taken independent of $\eps\in(0,\eps_{0})$ for a fixed $\eps_0$.
Since $\mu^-$ is continuous at $x_0$ we can pick $\eps_1\in(0,\eps_0)$ sufficiently small and such that 
\[
\tfrac{1}{2}\overline{\sigma}_K\mu^-(\cI_{\eps_1}\setminus\{x_0\})<\tfrac{1}{3\sqrt{2\pi}}\theta\underline{\sigma}_K\e^{-T\bar r}.
\]
Having fixed $\eps_1>0$, thanks to \eqref{eq:liminflt} we can find $\delta_1>0$ sufficiently small, so that the right-hand side of \eqref{eq:atom} is strictly positive. Hence we conclude that $u(t,x_0)>0$. Since $t\in[0,T)$ was arbitrary, then $[0,T)\times\{x_0\}\in\cC$.
\end{proof}

Let us now introduce suitable subsets of $\Lambda_\pm$ that will be useful to prove properties of $\cC$ and $\cS$. For $x\in\cI$ we denote by $\cO_x\subset\cI$ an open neighbourhood of $x$ and we set
\[
\Lambda^0_\pm=\{x\in\Lambda_\pm:\,\mu^\pm(\cO_x)>0\,\text{for all $\cO_x$}\}.
\]
\begin{proposition}\label{prop:stop1}
If $(a,b)\subset\Lambda^0_+$, then $[0,T)\times(a,b)\subseteq\cC$.
\end{proposition}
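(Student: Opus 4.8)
The plan is to show that if $(a,b)\subseteq\Lambda^0_+$ then, for every $t\in[0,T)$ and every $x\in(a,b)$, one has $u(t,x)>0$, which by definition of $\cC$ gives the claim. The key tool is the Lagrange representation \eqref{eq:u} together with the local time estimates in Lemma \ref{lem:tech}. The heuristic is that near a point $x\in(a,b)$ the measure $\mu$ puts strictly positive mass in every neighbourhood, and the process $X$ started from $x$ necessarily accumulates local time on such neighbourhoods for small times; this produces a strictly positive lower bound on the expected accumulated (discounted) local time against $\mu$, hence on $u$.

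First I would fix $(t,x)\in[0,T)\times(a,b)$ and choose $\eps>0$ small enough that $\cI_\eps:=(x-\eps,x+\eps)\subset(a,b)$, and set $\tau_\eps=\inf\{s\ge 0:X_s\notin\cI_\eps\}$. Since $\cI_\eps\subseteq\Lambda_+$, we have $\mu(\ud z)=\mu^+(\ud z)$ on $\cI_\eps$ and $\mu^-(\cI_\eps)=0$, so using $\tau_\eps\wedge\delta$ as an admissible stopping time in \eqref{eq:u} and the fact that $\ell^z_{\tau_\eps\wedge\delta}=0$ for $z\notin\cI_\eps$,
\[
u(t,x)\ge \tfrac12\,\E_x\Big[\int_{\cI_\eps}\ell^z_{\tau_\eps\wedge\delta}\,\mu^+(\ud z)\Big].
\]
Now the task is to bound this from below by something strictly positive. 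The difficulty compared with Proposition \ref{prop:bay} is that $\mu^+$ need not have an atom at $x$, so I cannot simply isolate a single $\ell^x$ term; instead one must exploit that $\mu^+$ charges a whole subinterval. I would split $\cI_\eps$ into finitely many subintervals and, on a suitable one where $\mu^+$ has positive mass, use a uniform lower bound on $\E_x[\ell^z_{\tau_\eps\wedge\delta}]$ for $z$ ranging over a small interval around a point $y_0$; the natural route is to pick $y_0\in\cI_\eps$ with $\mu^+$ charging every neighbourhood of $y_0$ (which exists since $(a,b)\subseteq\Lambda^0_+$ — in fact every point of $(a,b)$ works), then take a small $\eta$ with $(y_0-\eta,y_0+\eta)\subset\cI_\eps$ and note $\mu^+((y_0-\eta,y_0+\eta))=:\beta>0$. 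For $z$ in this small interval I would combine the Fatou/It\^o--Tanaka argument from \eqref{eq:liminflt} in Lemma \ref{lem:tech} — now carried out for a generic point $z$ rather than the starting point $x$, which requires replacing the local time at $x$ by local time at $z$ and using that $X$ started at $x$ hits $z$ quickly — to get $\liminf_{\delta\to0}\delta^{-1/2}\E_x[\ell^z_{\tau_\eps\wedge\delta}]\ge c>0$ uniformly in $z$ over that interval, with $c$ depending only on the local bounds $\underline\sigma_K,\overline\sigma_K,\bar r$. Then
\[
u(t,x)\ge \tfrac12\int_{(y_0-\eta,y_0+\eta)}\E_x[\ell^z_{\tau_\eps\wedge\delta}]\,\mu^+(\ud z)\ge \tfrac12\,c\,\sqrt{\delta}\,\beta+o(\sqrt\delta)>0
\]
for $\delta$ small, giving $u(t,x)>0$.

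The main obstacle I anticipate is the uniform-in-$z$ lower bound on $\E_x[\ell^z_{\tau_\eps\wedge\delta}]$: in Lemma \ref{lem:tech} the estimate \eqref{eq:liminflt} is stated only for local time at the starting point $x$, and extending it to nearby points $z\neq x$ requires a short additional argument (e.g. conditioning on the hitting time of $z$, which is $\P_x$-a.s.\ finite and small with high probability, and then applying the Markov property, or alternatively repeating the It\^o--Tanaka computation with $|X_{\tau_\eps\wedge\delta}-z|-|x-z|$ and controlling the extra term $-|x-z|$, which is harmless since $|x-z|<2\eps$ but needs the Brownian local-time at a point to dominate, uniformly, a Brownian motion started at bounded distance). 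An alternative, possibly cleaner, route is to integrate first: by Lemma \ref{lem:loc-time} (or its proof) $\E_x[\ell^z_{\tau_\eps\wedge\delta}]$ relates to the transition density, and since the latter is continuous and strictly positive, $\int_{\cI_\eps}\E_x[\ell^z_{\tau_\eps\wedge\delta}]\mu^+(\ud z)$ can be bounded below directly using occupation-time-type identities; but the cleanest is probably to prove once and for all, as a small strengthening of Lemma \ref{lem:tech}, that $\liminf_{\delta\to0}\delta^{-1/2}\E_x[\ell^z_{\delta\wedge\tau_\eps}]\ge\frac{1}{\sqrt{2\pi}}\underline\sigma_K\e^{-\bar r T}$ holds uniformly for $z$ in a compact neighbourhood of $x$ contained in $\cI_\eps$, and then invoke it. Everything else (choosing $\eps,\eta$, applying \eqref{eq:u} with $\tau_\eps\wedge\delta$, taking $\delta\downarrow0$, arbitrariness of $t$ and $x$) is routine.
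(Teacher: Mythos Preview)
Your main line of attack has a genuine gap. The proposed ``strengthening'' of Lemma~\ref{lem:tech}, namely that
\[
\liminf_{\delta\to 0}\delta^{-1/2}\,\E_x\big[\ell^z_{\delta\wedge\tau_\eps}\big]\ge \tfrac{1}{\sqrt{2\pi}}\,\underline\sigma_K\,\e^{-\bar r T}
\]
uniformly for $z$ in a compact neighbourhood of $x$, is \emph{false}. For any fixed $z\neq x$ the process started at $x$ must first hit $z$ before accumulating local time there; for Brownian motion (hence, by time-change, for $X$) the probability of hitting $z$ by time $\delta$ is of order $\exp(-c|z-x|^2/\delta)$, so $\E_x[\ell^z_\delta]=o(\delta^{1/2})$ as $\delta\downarrow 0$. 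Equivalently, the It\^o--Tanaka identity gives $\E_x[L^z_\delta]=\E_x|X_\delta-z|-|x-z|$, and for $z\neq x$ this quantity is $O(\delta)$, not $\sqrt\delta$. Thus no uniform-in-$z$ bound of the claimed type can hold, and the displayed estimate $u(t,x)\ge \tfrac12 c\sqrt\delta\,\beta+o(\sqrt\delta)$ does not follow.

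The paper avoids this difficulty by dropping $\delta$ altogether and working directly with $\tau_\eps$ (capped at $T-t$). One only needs the \emph{qualitative} fact
\[
\E_{x}\big[\ell^z_{\tau_\eps}\big]>0\quad\text{for every }z\in\cI_\eps,
\]
which is established separately (see \eqref{eq:ellprob}) via a time-change to Brownian motion and the strict positivity of the transition density of Brownian motion killed upon exiting $\cI_\eps$. With this in hand, Fubini and $\mu^+(\cI_\eps)>0$, $\mu^-(\cI_\eps)=0$ immediately give
\[
u(t,x)\ge \tfrac12\int_{\cI_\eps}\E_x\big[\ell^z_{\tau_\eps}\big]\,\mu^+(\ud z)>0.
\]
Your ``alternative, possibly cleaner'' remark about the transition density is exactly on the right track; it is in fact the whole argument, and no asymptotics in $\delta$ are needed.
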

\begin{proof}
Take an arbitrary $x_0\in(a,b)$ and let $\eps>0$ be such that $\cI_\eps:=(x_0-\eps,x_0+\eps)\subset(a,b)$. Then, $\mu^+(\cI_\eps)>0$ with $\mu^-(\cI_\eps)=0$ because $(a,b)\subset\Lambda_+$. Fix $t\in[0,T)$ and let $\tau_\eps=\inf\{s\ge 0: X_s\notin \cI_\eps\}\wedge (T-t)$. Below we will use the following fact
\begin{align}\label{eq:ellprob}
\E_{x_0}[\ell^z_{\tau_\eps}]>0,\quad \text{for all $z\in \cI_\eps$}, 
\end{align}
whose proof we also provide in the Appendix for completeness.

The stopping time $\tau_\eps$ is admissible and sub-optimal for the stopping problem with starting point $(t,x_0)$ and $\P_{x_0}(\tau_\eps>0)=1$ by the continuity of paths of $X$. Then, using \eqref{eq:u} we obtain 
\[
u(t,x_0)\ge \E_{x_0}\left[\tfrac{1}{2}\int_\cI \ell^z_{\tau_\eps} \mu(\ud z)\right]=\E_{x_0}\left[\tfrac{1}{2}\int_{\cI_\eps} \ell^z_{\tau_\eps} \mu^+(\ud z)\right]>0,
\] 
where in the equality we used that $\ell^z_{\tau_\eps}=0$, $\P_{x_0}$-a.s.\ for $z\notin\cI_\eps$ and the final inequality is by \eqref{eq:ellprob} and Fubini's theorem. Since $u(t,x_0)>0$ then $(t,x_0)\in\cC$. Recalling that $t\in[0,T)$ can be chosen arbitrarily gives $[0,T)\times\{x_0\}\in\cC$. Since $x_0\in(a,b)$ was also arbitrary, then $[0,T)\times(a,b)\subset\cC$ as claimed.
\end{proof}

Next we show that $c(x)<T$ at points $x\in\Lambda^0_-$ and that the stopping set is connected in the sense of \eqref{eq:connS1} below. For that, it is convenient to recall continuity of the value function and for simplicity we will also require the integrability condition
\begin{align}\label{eq:intM}
\sup_{x\in K}\E_x\left[\sup_{0\le t\le T}\e^{-\int_0^t r(X_s)\ud s}\Big(\big|g(X_t)\big|+|X_t|\Big)\right]<+\infty.
\end{align} 
The latter strengthens slightly the requirement in \eqref{eq:integr-g} by adding uniform integrability of the discounted process $X$.
\begin{proposition}\label{prop:stop2}
Let Assumption \ref{ass:cont-v} hold. 
\begin{itemize}
\item[(i)]
If $(a,b)\subset\Lambda^0_-$ and $r,\sigma\in C^\alpha([a,b])$ for some $\alpha\in(0,1)$, then $c(x)<T$ for $x\in(a,b)$; moreover, for any $x_1<x_2$ in $(a,b)$ we have
\begin{align}\label{eq:connS1}
[c(x_1)\vee c(x_2),T]\times[x_1,x_2]\subseteq\cS.
\end{align}
\item[(ii)] Let condition \eqref{eq:intM} hold. If $x_0\in\cI$ is such that $\mu(\{x_0\})<0$, then $c(x_0)<T$.
\end{itemize}
\end{proposition}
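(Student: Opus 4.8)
Both claims come down to showing that immediate stopping is optimal at $(t,x_0)$ when $T-t$ is small, i.e.\ $u(t,x_0)=0$ for $t$ in a left neighbourhood of $T$. The common first step is a localization: for small $\eps>0$ set $\cI_\eps=(x_0-\eps,x_0+\eps)$ (with $\overline{\cI_\eps}\subset(a,b)$ in case (i)) and $\tau_\eps=\inf\{s\ge0:X_s\notin\cI_\eps\}$. Applying It\^o--Tanaka--Meyer to $s\mapsto\e^{-\int_0^s r(X_u)\ud u}g(X_s)$ on $[0,\tau\wedge\tau_\eps]$ for an admissible $\tau\le T-t$, and using that the local time charged there lives in $\cI_\eps$ and the martingale part is a true martingale on $[0,\tau_\eps]$,
\[
\E_{x_0}\!\big[\e^{-\int_0^{\tau\wedge\tau_\eps} r(X_u)\ud u}g(X_{\tau\wedge\tau_\eps})\big]=g(x_0)+\tfrac12\E_{x_0}\!\Big[\textstyle\int_{\cI_\eps}\ell^z_{\tau\wedge\tau_\eps}\,\mu(\ud z)\Big].
\]
In case (i) the bracket is $\le0$ since $\mu\le0$ on $(a,b)\supset\cI_\eps$; in case (ii) the restriction of $\mu$ to $\cI_\eps$ is $-\theta\,\delta_{x_0}$, with $\theta:=-\mu(\{x_0\})>0$, plus a signed remainder supported off $x_0$ whose positive part is the restriction of $\mu^+$ to $\cI_\eps$, and since $\mu^+$ carries no mass at $x_0$ one picks $\eps$ small enough that this is dominated by the atom (after comparing $\E_{x_0}[\ell^z_{\tau\wedge\tau_\eps}]$ with $\E_{x_0}[\ell^{x_0}_{\tau\wedge\tau_\eps}]$, which differ only by a multiplicative constant via a further It\^o--Tanaka estimate on $|X-z|$ and $|X-x_0|$); either way $\E_{x_0}[\e^{-\int r}g(X_{\tau\wedge\tau_\eps})]\le g(x_0)$. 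Splitting $\tau$ over $\{\tau\le\tau_\eps\}$ and $\{\tau_\eps<\tau\}$, the strong Markov property and $v=u+g$ then give the clean inequality
\[
u(t,x_0)\ \le\ \E_{x_0}\!\big[\ind_{\{\tau_\eps<T-t\}}\,\e^{-\int_0^{\tau_\eps}r(X_u)\ud u}\,u(t+\tau_\eps,X_{\tau_\eps})\big],
\]
and, since $X_{\tau_\eps}\in\partial\cI_\eps$ is confined to a compact set, bounding $u$ there by the constant $M$ of \eqref{eq:bound-v} already yields $u(t,x_0)\to0$ as $t\to T$. Condition \eqref{eq:intM} enters here to keep the corresponding $\mu^+$-contribution over the unbounded part of $\cI$ finite, legitimising the limits.

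\textbf{Upgrading to the exact equality (the main obstacle).} The inequality above only gives $u(t,x_0)=O\big(\P_{x_0}(\tau_\eps<T-t)\big)$, which is strictly positive for every $t<T$; the real work is to conclude $u(t,x_0)=0$, i.e.\ to show that the exponentially rare excursions of $X$ away from $x_0$ that could produce positive payoff are strictly outweighed by the non-positive local time accrued near $x_0$. In case (ii) this is done by iterating the excursion decomposition, charging each departure of $X$ from a neighbourhood of $x_0$ against the local time at $x_0$ collected beforehand: the sharp lower bound \eqref{eq:liminflt} of Lemma~\ref{lem:tech}, making $\E_{x_0}[\ell^{x_0}_{\delta\wedge\tau_\eps}]$ of exact order $\delta^{1/2}$, beats the benefit of such excursions, which is exponentially small in $(T-t)^{-1}$ by Lemma~\ref{lem:loc-time} and the Gaussian-type off-diagonal decay of $\hat p$. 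In case (i) the extra hypothesis $r,\sigma\in C^\alpha([a,b])$ is used instead: by Corollary~\ref{cor:PDE}/Remark~\ref{rem:loc-reg}, $v\in C^{1,2}(\cC_{(a,b)})$ solves $\partial_tv+\tfrac12\sigma^2\partial_{xx}v=rv$ there, and since $\mu\le0$ on $(a,b)$ the gain function $g$ is a distributional supersolution of this equation on $(T-\delta,T)\times(a,b)$; assuming $c(x_0)=T$, so that $(t,x_0)\in\cC$ for all $t<T$, one runs a parabolic comparison between $v$ and the supersolution $g$ on a subregion of $\cC$ adjacent to $(T,x_0)$ — the terminal slice and the $\partial\cC$-part of its parabolic boundary carry $v=g$, and the remaining lateral/bottom part is handled by feeding back the self-improving estimate $u(s,y)\le M\,\P_y(\tau_\eps<T-s)$, valid throughout $(a,b)$ — forcing $v\le g$, hence $v=g$, near $(T,x_0)$ and contradicting $(t,x_0)\in\cC$.

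\textbf{The inclusion \eqref{eq:connS1}.} Once $c<T$ on $(a,b)$ is known, \eqref{eq:connS1} follows by a short strong-Markov argument. Fix $x_1<x_2$ in $(a,b)$, $t\ge c(x_1)\vee c(x_2)$ and $x\in[x_1,x_2]$, and let $\vartheta=\inf\{s\ge0:X^x_s\notin(x_1,x_2)\}$. For any admissible $\tau$: on $\{\tau\le\vartheta\}$ the local time charged lies in $[x_1,x_2]\subset(a,b)$, where $\mu\le0$, so $\tfrac12\int_\cI\ell^z_\tau\mu(\ud z)\le0$; on $\{\vartheta<\tau\}$, decomposing at $\vartheta$ the part up to $\vartheta$ is $\le0$ as before and, by the strong Markov property, the post-$\vartheta$ contribution has conditional expectation bounded by $u(t+\vartheta,X_\vartheta)$, which vanishes since $X_\vartheta\in\{x_1,x_2\}$ and $t+\vartheta\ge t\ge c(x_1)\vee c(x_2)\ge c(X_\vartheta)$. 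Hence $u(t,x)\le0$, i.e.\ $(t,x)\in\cS$, which is \eqref{eq:connS1}.

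\textbf{Expected difficulty.} The clean parts (the It\^o--Tanaka identity, the strong-Markov splitting, and the argument for \eqref{eq:connS1}) are routine; the crux is the upgrade described in the second paragraph, namely turning the soft bound $u(t,x_0)\le M\,\P_{x_0}(\tau_\eps<T-t)$ into the exact vanishing of $u$ near $T$. This is precisely where the fine $\delta^{1/2}$-estimate of Lemma~\ref{lem:tech} is needed in (ii), and where the Hölder regularity of $r,\sigma$ — hence the PDE and a comparison principle — is needed in (i).
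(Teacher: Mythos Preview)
Your framework (the localization, the strong-Markov splitting, and the argument for \eqref{eq:connS1}) is sound, and the bound $u(t,x_0)\le M\,\P_{x_0}(\tau_\eps<T-t)$ is correct in both cases --- in particular your comparison $\E_{x_0}[\ell^z_{\tau\wedge\tau_\eps}]\le C\,\E_{x_0}[\ell^{x_0}_{\tau\wedge\tau_\eps}]$ via It\^o--Tanaka is valid. You also correctly locate the real difficulty at the ``upgrade'' step. However, your proposed upgrades have genuine gaps, and the paper's arguments are structurally different from what you sketch.

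For (i), your ``parabolic comparison with self-improving bootstrap'' is not well-posed: the subregion of $\cC$ adjacent to $(T,x_0)$ need not be a rectangle, and it is unclear how feeding back $u(s,y)\le M\,\P_y(\tau_\eps<T-s)$ on its lateral boundary would close the comparison (on that boundary $v>g$, so a naive comparison fails). The paper does \emph{not} argue point-by-point. Instead it assumes, by contradiction, that the whole rectangle $[0,T)\times(a,b)$ lies in $\cC$, uses the PDE together with $\partial_t v\le 0$ to get $\partial_{xx}v\ge 2\sigma^{-2}rv$ there, integrates against $\varphi\in C^\infty_c(a,b)$, passes $t\to T$ by dominated convergence and undoes the integration by parts to obtain $\int\varphi\,\mu\ge 0$, contradicting $(a,b)\subset\Lambda^0_-$. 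This shows $\cS\cap\big([0,T)\times(a,b)\big)\neq\varnothing$ and, by the same argument on sub-intervals, that $\cD:=\{x:c(x)<T\}$ has no isolated points in $(a,b)$. Your local-time argument for \eqref{eq:connS1} is then used first only for $x_1,x_2\in\cD$, and combined with the previous two facts forces $\cD=(a,b)$.

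For (ii), your ``iterated excursion decomposition'' invokes Gaussian-type off-diagonal decay of $\hat p$ that is nowhere assumed, and there is no mechanism offered for charging the post-$\tau_\eps$ gain against the pre-$\tau_\eps$ local time in a way that closes. The paper's device is quite different: it introduces an \emph{auxiliary} stopping problem with measure $\tilde\mu=\mu+\ind_{\cI^0\setminus\{x_0\}}[1+\mu^-]$, strictly positive on $\cI^0\setminus\{x_0\}$, and value $\tilde u\ge u$. Proposition~\ref{prop:stop1} then gives $[0,T)\times(\cI^0\setminus\{x_0\})\subset\tilde\cC$, so under the contradiction hypothesis the whole $[0,T)\times\cI^0$ lies in $\tilde\cC$ and hence $\tau_\eps\le\tilde\tau_*$. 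This allows using the \emph{martingale} (not merely supermartingale) property of $\tilde u$ at $\tau_\eps$, producing an equality in which the boundary term is $\le c_0\,\P_{x_0}(\tau_\eps<T-t)\le C(T-t)$ (via Doob's inequality) while the local-time term carries the negative atom of order $\sqrt{T-t}$ from Lemma~\ref{lem:tech}; for $t$ near $T$ this yields $\tilde u(t,x_0)<0$, the desired contradiction. Condition \eqref{eq:intM} enters only to ensure the auxiliary gain $\tilde g$ (with $\tilde g''=\tilde\mu$, hence $0\le\tilde g-g\le c(1+|x|)$) satisfies \eqref{eq:integr-g}, not to control any tail of $\mu^+$ as you suggest.
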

\begin{proof}
We divide the proof into two steps.

{\bf Step 1}. ({\em Proof of} (i)). To prove the first statement we argue by contradiction. Let us first assume that $(a,b)\subset\Lambda^0_-$ and $c(x)=T$ for all $x\in(a,b)$. We use ideas as in \cite{DeA15} but without requiring smoothness of $\sigma$. Consider the rectangular domain $\cR:=[0,T)\times(a,b)\subset\cC$ with parabolic boundary $\partial_P\cR=([0,T]\times[\{a\}\cup\{b\}])\cup(\{T\}\times(a,b))$. By Corollary \ref{cor:PDE} and Remark \ref{rem:loc-reg} we know that $v$ is the unique solution of the boundary value problem
\begin{align}\label{eq:PDE0}
\partial_t w+\tfrac{\sigma^2}{2}\partial_{xx}w=rw,\quad\text{on $\cR$ with $w=v$ on $\partial_P\cR$}.
\end{align} 
Monotonicity of $t\mapsto v(t,x)$ and \eqref{eq:PDE0} imply 
\[
\partial_{xx}v=2\sigma^{-2}(rv-\partial_t v)\ge 2\sigma^{-2}rv\qquad\text{on $\mathcal R$}.
\]
Let $\varphi\in C^\infty_c(a,b)$ with $\varphi\ge 0$ and $\int_a^b\varphi(x)\ud x=1$, multiply both sides of the inequality above by $\varphi$ and integrate over $(a,b)$. Integration by parts gives 
\begin{align*}
0\le &\int_a^b\big(\partial_{xx}v(t,x)-2\frac{r(x)}{\sigma^2(x)}v(t,x)\big)\varphi(x)\ud x\\
 =& \int_a^b \big(v(t,x)\partial_{xx}\varphi(x)-2\frac{r(x)}{\sigma^2(x)}v(t,x)\varphi(x)\big)\ud x,\quad\text{for $t\in[0,T)$.}
\end{align*}
Letting $t\to T$ in the above we obtain
\begin{align*}
0\le& \lim_{t\to T}\int_a^b \big(v(t,x)\partial_{xx}\varphi(x)-2\frac{r(x)}{\sigma^2(x)}v(t,x)\varphi(x)\big)\ud x\\
=&\int_a^b \big(g(x)\partial_{xx}\varphi(x)-2\frac{r(x)}{\sigma^2(x)}g(x)\varphi(x)\big)\ud x,
\end{align*}
where the final equality uses dominated convergence, continuity of the value function and $v(T,x)=g(x)$. Undoing the integration by parts we reach a contradiction with
\[
0\le \int_{(a,b)}\varphi(x)\mu(\ud x)<0,
\]
where the strict inequality holds by arbitrariness of $\varphi$ and $(a,b)\subset\Lambda^0_-$.
Then 
\begin{align}\label{eq:nonempty}
\big([0,T)\times(a,b)\big)\cap\cS\neq\varnothing.
\end{align}
By the exact same argument we can show that $\cD:=\{x\in(a,b):\,c(x)<T\}$ cannot contain isolated points in $(a,b)$ (otherwise we could construct a suitable rectangle $\cR'$ and reach a contradiction as above). 

Next we show that \eqref{eq:connS1} holds for any two points in $\cD$. 
Let us argue by contradiction again: take any two points $x_1<x_2$ in $\cD$, set $t_0=c(x_1)\vee c(x_2)<T$ and assume there exists $x_3\in(x_1,x_2)$ such that $(t_0,x_3)\in\cC$. Then, $\tau_*^{t_0,x_3}\le \inf\{t\ge 0: X^{x_3}_t\notin (x_1,x_2)\}$ because the segments $[t_0,T]\times\{x_i\}$, $i=1,2$, lie in the stopping set (recall that $\tau_*^{t_0,x_3}$ is the first time $(t_0+s,X^{x_3}_s)$ enters $\cS$). Then, 
\begin{align}\label{eq:mu-}
u(t_0,x_3)=\E_{x_3}\left[\tfrac{1}{2}\int_\cI \ell^z_{\tau_*}\mu(\ud z)\right]=-\E_{x_3}\left[\tfrac{1}{2}\int_{[x_1,x_2]} \ell^z_{\tau_*}\mu^-(\ud z)\right]\le 0
\end{align}
gives us a contradiction and \eqref{eq:connS1} holds.

Since \eqref{eq:connS1} holds in $\cD$ and the latter set has no isolated points in $(a,b)$ we conclude that $c(x)<T$ for all $x\in(a,b)$. Indeed, assume by way of contradiction that there is $x\in(a,b)$ such that $c(x)=T$. There are $x_1,x_2\in\cD$ with $x_1<x<x_2$ and \eqref{eq:connS1} holds for such $x_1$ and $x_2$. Then we have reached a contradiction.
\vspace{+3pt}

{\bf Step 2}. ({\em Proof of} (ii)). It remains to prove the final statement. Let $x_0\in\cI$ be such that $\theta:=-\mu(\{x_0\})>0$. Fix $\eps_0>0$ and let $\cI_\eps:=(x_0-\eps,x_0+\eps)$ for any $\eps\in(0,\eps_0]$, with $\cI^0:=\cI_{\eps_0}$. Since $x_0\in\Lambda_-$, then $\mu^+(\{x_0\})=0$ and we have
\begin{align*}
u(t,x_0)=&\sup_{0\le \tau \le T-t}\E_{x_0}\left[\tfrac{1}{2}\left(-\theta\, \ell^{x_0}_\tau+\int_{\cI^0\setminus\{x_0\}}\ell^z_\tau \mu(\ud z)+\int_{\cI\setminus\cI^0}\ell^z_\tau \mu(\ud z)\right)\right]\\
\le &\sup_{0\le \tau \le T-t}\E_{x_0}\left[\tfrac{1}{2}\left(-\theta\, \ell^{x_0}_\tau+\int_{\cI^0\setminus\{x_0\}}\ell^z_\tau [1+\mu^+](\ud z)+\int_{\cI\setminus\cI^0}\ell^z_\tau \mu(\ud z)\right)\right]\\
=:&\, \tilde{u}(t,x_0).
\end{align*}
Here $\tilde u(t,x)$ is the value function of a stopping problem of the form \eqref{eq:u} but with $\mu$ replaced by the measure 
\begin{align*}
\tilde \mu(\ud x)=& \ind_{\cI\setminus\cI^0}\mu(\ud x)+\ind_{\cI^0\setminus\{x_0\}}[1+\mu^+](\ud x)-\theta\,\delta_{x_0}(\ud x)\\
=&\mu(\ud x)+\ind_{\cI^0\setminus\{x_0\}}[1+\mu^-](\ud x),
\end{align*}
with $\delta_{x_0}$ the Dirac's delta at $x_0$. Clearly this problem enjoys the same properties of the original one: the gain function $\tilde g$ associated to $\tilde \mu$ (i.e., $\tilde g''=\tilde \mu$) is difference of two convex functions and, up to an affine transformation, it can be chosen so that $0\le \tilde g-g\le c(1+|x|)$ for a suitable constant $c>0$ depending on $\cI^0$; hence \eqref{eq:integr-g} holds for $\tilde g$ thanks to \eqref{eq:intM}. Differently to $\mu(\ud x)$ the measure $\tilde \mu(\ud x)$ is strictly positive on $\cI^0\setminus\{x_0\}$. That is, $\cI^0\setminus\{x_0\}\subset\tilde{\Lambda}^0_+$, where $\tilde{\Lambda}^0_+$ is the analogue of $\Lambda^0_+$ for the measure $\tilde\mu$. Then, by the same argument as in the proof of Proposition \ref{prop:stop1} we have $[0,T)\times\big(\cI^0\setminus\{x_0\}\big)\subset\tilde{\cC}$, where we denote $\tilde{\cC}=\{(t,x):\tilde u(t,x) >0\}$. Notice that since $u\le \tilde u$, then $\cC\subset\tilde{\cC}$.

Now, arguing by contradiction we assume $[0,T)\times\{x_0\}\subset\cC$. Then  $[0,T)\times\{x_0\}\subset\tilde\cC$ and the latter implies also $[0,T)\times\cI^0\subset\tilde{\cC}$, by the discussion above. This will lead to a contradiction. Fix $t\in[0,T)$ and $\eps\in(0,\eps_0]$, and let 
\[
\tau^{x_0}_\eps=\inf\{s\ge 0:X^{x_0}_s\notin\cI_\eps\}\wedge(T-t).
\] 
Then, letting $\tilde\tau^{t,x_0}_*$ be the optimal stopping time for $\tilde u(t,x_0)$, we have $\P\big(\tau^{x_0}_\eps\le \tilde\tau^{t,x_0}_*\big)=1$ because $\cI_\eps\subset\cI^0$ and $[0,T)\times\cI^0\subset \tilde \cC$ by assumption. Using the martingale property \eqref{eq:mart} for the value function $\tilde u$ and noticing that $\tilde u(T,x)=0$ for all $x\in\cI$, we have
\begin{align*}
\tilde u(t,x_0)=& \E_{x_0}\left[\e^{-\int_0^{\tau_\eps}r(X_s)\ud s}\tilde u(t+\tau_\eps,X_{\tau_\eps})+\int_\cI \tfrac{1}{2}\ell^z_{\tau_\eps}\tilde \mu(\ud z)\right]\\
\le & c_0 \P_{x_0}(\tau_\eps<T-t)+\tfrac{1}{2}\E_{x_0}\left[-\theta\, \ell^{x_0}_{\tau_\eps}+\int_{\cI_\eps\setminus\{x_0\}} \ell^z_{\tau_\eps}\tilde \mu(\ud z)\right],
\end{align*}
where $c_0:=\sup_{[0,T]\times\cI^0}|\tilde{u}(t,x)|$ is finite thanks to \eqref{eq:bound-v} applied to $\tilde v:=\tilde u+\tilde g$. By the exact same arguments as in the proof of Lemma \ref{lem:tech} (see \eqref{eq:loc1} and \eqref{eq:loc2}) we obtain the upper bound
\begin{align*}
\tilde u(t,x_0)\le & c_0 \P_{x_0}(\tau_\eps<T-t)+\tfrac{1}{2}\overline{\sigma}_0\sqrt{T-t}\,\tilde \mu^+\big(\cI_\eps\setminus\{x_0\}\big)\\
&-\tfrac{1}{2}\theta\,\underline \sigma_0\sqrt{T-t}\left(\E|W_1|-\sqrt{\P_{x_0}(\tau_\eps< T-t)}\right)\e^{-T\bar r},
\end{align*}
where $\underline\sigma_0=\inf_{x\in\cI^0}\sigma(x)$, $\overline\sigma_0=\sup_{x\in\cI^0}\sigma(x)$ and $\bar r=\sup_{x\in\cI^0}r(x)$. Since $\mu^+(\{x_0\})=0$, then the same holds for $\tilde \mu^+$ and we can select $\eps_1\in(0,\eps_0]$ sufficiently small that 
\[
\overline\sigma_0\tilde \mu^+\big(\cI_{\eps_1}\setminus\{x_0\}\big)\le \tfrac{1}{2}\theta\,\underline\sigma_0\,\E|W_1|\e^{-T\bar r}.
\]
Hence,
\begin{align*}
\tilde u(t,x_0)\le & c_0 \P_{x_0}(\tau_{\eps_1}<T-t)-\tfrac{1}{2}\theta\,\underline\sigma_0\sqrt{T-t}\,\left(\tfrac{1}{2}\E|W_1|-\sqrt{\P_{x_0}(\tau_{\eps_1}< T-t)}\right)\e^{-T\bar r}.
\end{align*}
By continuity of paths of $X$ it is clear that $\P_{x_0}(\tau_{\eps_1}< T-t)\to 0$ as $t\to T$ so that we can let $\tfrac{1}{2}\E|W_1|-\sqrt{\P_{x_0}(\tau_{\eps_1}< T-t)}\ge \tfrac{1}{4}\E|W_1|$ in the limit. Then both terms on the right-hand side above go to zero, with the second term being strictly negative and vanishing as $\sqrt{T-t}$ when $t\to T$. Assume that $\P_{x_0}(\tau_{\eps_1}<T-t)\approx(T-t)$ as $t\to T$, then there exists $t_1<T$ such that $\tilde u(t,x_0)<0$ for $t\in[t_1,T)$ and we reach a contradiction with $[0,T)\times\{x_0\}\subset\cC\subset\tilde\cC$.

It remains to show that $\P_{x_0}(\tau_{\eps_1}<T-t)\approx (T-t)$ as $t\to T$. For that, we define $\tilde\sigma$ as 
\begin{align*}
\tilde\sigma(x):=\left\{
\begin{array}{ll}
\sigma(x), & x\in \cI^0,\\[+4pt]
\sigma(x_0-\eps_0), & x\le x_0-\eps_0,\\[+4pt]
\sigma(x_0+\eps_0), & x\ge x_0+\eps_0, 
\end{array}
\right.
\end{align*}
along with the process $\widetilde X$ on $\R$, which is the unique (possibly weak) solution of
\[
\ud \widetilde X_t=\tilde\sigma(\widetilde X_t)\ud B_t,\qquad \widetilde{X}_0=x_0.
\]
By strong uniqueness of \eqref{eq:X} we have $X_{t\wedge\tau_{\eps_1}}=\widetilde X_{t\wedge\tilde\tau_{\eps_1}}$ for all $t\ge 0$, $\P_{x_0}$-a.s., for $\tilde\tau_{\eps_1}=\inf\{s\ge 0: \widetilde X_t\notin\cI_{\eps_1}\}\wedge(T-t)$. Therefore, using Markov inequality and Doob's martingale inequality we obtain 
\begin{align*}
\P_{x_0}\left(\tau_{\eps_1}< T-t\right)=&\,\P_{x_0}\left(\tilde\tau_{\eps_1}< T-t\right)\le \P_{x_0}\left(\sup_{0\le s\le T-t}\left|\int_0^s\tilde\sigma(\widetilde X_u)\ud B_u\right|\ge \frac{\eps_1}{2}\right)\\
\le& \frac{4}{\eps^2_1}\E_{x_0}\left[\sup_{0\le s\le T-t}\left|\int_0^s\tilde\sigma(\widetilde X_u)\ud B_u\right|^2\right]\le \frac{16}{\eps_1^2}\E_{x_0}\left[\int_0^{T-t}\tilde\sigma^2(\widetilde X_u)\ud u\right]\\
\le & \frac{16}{\eps_1^2}\overline{\sigma}^2_0(T-t),
\end{align*}
which concludes the proof.
\end{proof}

\begin{remark}[Flatness of $x\mapsto c(x)$]\label{re:flat-c}
The argument we used in step 1 of the proof above to obtain \eqref{eq:nonempty} was originally designed in \cite{DeA15} to show continuity of optimal boundaries as functions of time. Here, as a byproduct of the proof we obtain that the map $x\mapsto c(x)$ cannot exhibit a flat stretch, which is also strictly positive, on $\Lambda^0_-$. That is, if there exists an interval $(x_1,x_2)\subseteq \Lambda^0_-$ such that $c(x)=\hat c$ for $x\in(x_1,x_2)$, then it must be $\hat c=0$.
The proof is an exact repetition of the one for \eqref{eq:nonempty}, so we omit it.
\end{remark}

There is a nice monotonicity result that follows as a corollary from Proposition \ref{prop:stop2}. First notice that given any interval $[a,b]\subset\cI$ the boundary attains a global minimum on $[a,b]$ by lower semi-continuity. Then we can define the set of minimisers
\[
\Sigma_{[a,b]}:=\mathrm{argmin}\{c(x),\,x\in[a,b]\}
\]
and $\Sigma_{[a,b]}\neq\varnothing$ for any $a\le b$. Notice that $\Sigma_{[a,b]}$ is a closed set by lower semi-continuity of the boundary.
\begin{corollary}\label{cor:monot}
Let Assumption \ref{ass:cont-v} hold, let $(a,b)\subset\Lambda^0_-$ and assume $r,\sigma\in C^\alpha([a,b])$ for some $\alpha\in(0,1)$. Then $\Sigma_{[a,b]}=[a_*,b_*]$ for some $a_*\le b_*$ (with $\Sigma_{[a,b]}=a_*$ if $a_*=b_*$). Moreover, if $a_*<b_*$ then $c(x)=0$ on $[a_*,b_*]$. Finally, $x\mapsto c(x)$ is strictly decreasing on $[a,a_*)$ and strictly increasing on $(b_*,b]$ (with $[x,x)=(x,x]=\varnothing$). 
\end{corollary}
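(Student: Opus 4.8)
The plan is to read off all the required geometry from the connectedness property \eqref{eq:connS1} of Proposition \ref{prop:stop2}(i), and then to upgrade the resulting (non-strict) monotonicity to strict monotonicity by forbidding non-trivial flat stretches via Remark \ref{re:flat-c}. Throughout, write $m:=\min_{[a,b]}c$ (which is $<T$, since $c<T$ on the non-empty interval $(a,b)$ by Proposition \ref{prop:stop2}(i)) and $a_*:=\min\Sigma_{[a,b]}\le b_*:=\max\Sigma_{[a,b]}$; these exist because $\Sigma_{[a,b]}$ is non-empty and closed. The one elementary consequence of \eqref{eq:connS1} I would use repeatedly is that, applying it to a triple $x_1<z<x_2$ in $(a,b)$, the point $(c(x_1)\vee c(x_2),z)$ lies in $\cS$, so that $c(z)\le c(x_1)\vee c(x_2)$ whenever $x_1<z<x_2$ in $(a,b)$; i.e.\ $x\mapsto c(x)$ is quasi-convex on $(a,b)$.

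\emph{Step 1 and 2 ($\Sigma_{[a,b]}=[a_*,b_*]$, and the value there).} If $a_*<b_*$, fix $x\in(a_*,b_*)$ and suppose $(m,x)\notin\cS$, i.e.\ $u(m,x)>0$. With $\rho:=\inf\{s\ge0:X^x_s\notin(a_*,b_*)\}$, the equalities $c(a_*)=c(b_*)=m$ force $(m+s,X^x_s)\in\cS$ at $s=\rho\wedge(T-m)$ (on $\partial(a_*,b_*)$ at a time $\ge m$, or at the horizon $T$), hence $\tau^{m,x}_*\le\rho\wedge(T-m)$; so $X^x$ stays in $[a_*,b_*]$ up to $\tau^{m,x}_*$, touching the endpoints at most at the single instant $\tau^{m,x}_*$, and $\ell^z_{\tau^{m,x}_*}=0$ for $z\notin(a_*,b_*)$. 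Since $(a_*,b_*)\subset(a,b)\subset\Lambda_-$, where $\mu=-\mu^-$, the Lagrange formulation \eqref{eq:u} gives $u(m,x)=-\tfrac12\E_x\big[\int_{(a_*,b_*)}\ell^z_{\tau^{m,x}_*}\mu^-(\ud z)\big]\le0$, a contradiction. Hence $(a_*,b_*)\subseteq\Sigma_{[a,b]}$ and, with the endpoints, $\Sigma_{[a,b]}=[a_*,b_*]$. (This is exactly the mechanism behind \eqref{eq:mu-}, now run on $(a_*,b_*)$; note it applies verbatim to any subinterval of $(a,b)$.) Since then $c\equiv m$ on $[a_*,b_*]$, if $a_*<b_*$ the function $c$ is constant on the non-empty open subinterval $(a_*,b_*)$ of $\Lambda^0_-$, so Remark \ref{re:flat-c} forces $m=0$.

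\emph{Step 3 (strict monotonicity).} I treat $(b_*,b]$; $[a,a_*)$ is symmetric. Assume first $a<b_*<b$, so $b_*\in(a,b)$, and note $c>m$ on $(b_*,b]$ because $b_*=\max\Sigma_{[a,b]}$. If $c$ were not strictly increasing on $(b_*,b)$, pick $b_*<x_1<x_2$ in $(b_*,b)$ with $c(x_1)=c(x_2)$ (equality, since quasi-convexity applied to $b_*<x_1<x_2$ already gives $c(x_1)\le c(b_*)\vee c(x_2)=c(x_2)$). Apply Step 1 to $[x_1,x_2]\subset(a,b)$: the minimisers of $c$ over $[x_1,x_2]$ form a closed interval $[\tilde a,\tilde b]$ on which $c\equiv\tilde m:=\min_{[x_1,x_2]}c$, with $\tilde m>m\ge0$ since $[x_1,x_2]\subset(b_*,b]$. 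If $\tilde a<\tilde b$ this is a non-trivial flat stretch inside $\Lambda^0_-$ with $\tilde m>0$, contradicting Remark \ref{re:flat-c}; hence $[\tilde a,\tilde b]=\{\hat x\}$, and $\hat x\in(x_1,x_2)$ (it cannot equal $x_1$ or $x_2$, else both would attain $\tilde m$). Finally, quasi-convexity applied to $b_*<\hat x$ gives $c\le c(b_*)\vee c(\hat x)=\tilde m$ on $(b_*,\hat x)$, whereas $c>\tilde m$ on $(x_1,\hat x)\subset(b_*,\hat x)$ by uniqueness of $\hat x$: contradiction. So $c$ is strictly increasing on $(b_*,b)$, and the right endpoint is included by the same scheme run on $[x_1,b]$, using $c(b)>m$ and $r,\sigma\in C^\alpha([a,b])$ up to the boundary.

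\emph{Main obstacle.} The three steps above combine \eqref{eq:connS1} with Remark \ref{re:flat-c} painlessly as long as, in Step 3, one of the two sandwiching points can be taken to be a minimiser lying in the \emph{open} interval $(a,b)$; this is automatic unless $\Sigma_{[a,b]}$ degenerates to a single endpoint of $[a,b]$. The genuinely delicate case is therefore the exclusion of a strict interior local minimum of $c$ at some $\hat x$ with $c(\hat x)>m$ when $\Sigma_{[a,b]}=\{a\}$ or $\{b\}$ (a ``stopping spike'' at the boundary of $[a,b]$): the sandwiching argument is then unavailable, and one must fall back on the localised small-time estimates for $\E_x[\ell^z_{\tau_\eps\wedge\delta}]$ of Lemma \ref{lem:tech}, bounding $u$ near such a $\hat x$ through the supermartingale property \eqref{eq:mart} exactly as in the proof of Proposition \ref{prop:stop2}(ii), to show $u$ cannot stay strictly positive there. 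This quantitative estimate is the only place where more than the qualitative inputs \eqref{eq:connS1} and Remark \ref{re:flat-c} is needed.
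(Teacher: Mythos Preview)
Your Steps 1--3 are essentially correct, but your ``Main obstacle'' paragraph identifies a phantom difficulty. The trapping argument you yourself use in Step~1 (i.e., the mechanism behind \eqref{eq:connS1} and \eqref{eq:mu-}) works perfectly well when one of the two barrier points is the endpoint $a$ or $b$: all that is needed is that the segments $[t_0,T]\times\{x_i\}$, $i=1,2$, lie in $\cS$ and that the open interval between them lies in $\Lambda_-$. Both conditions hold at $x_1=a$ or $x_2=b$. Consequently your quasi-convexity inequality $c(z)\le c(x_1)\vee c(x_2)$ is valid for all $a\le x_1<z<x_2\le b$, not only for interior points, and your Step~3 then goes through verbatim when $\Sigma_{[a,b]}=\{a\}$ or $\{b\}$. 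No appeal to the quantitative local-time estimates of Lemma~\ref{lem:tech} or to the mechanism of Proposition~\ref{prop:stop2}(ii) is required.

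The paper's proof of strict monotonicity is also more direct than your Step~3. Instead of passing to minimisers over the sub-interval $[x_1,x_2]$ and then sandwiching with $b_*$, the paper simply takes $x_1<x_2$ in $(b_*,b]$ with $c(x_2)\le c(x_1)$ and applies the trapping argument with $b_*$ as the fixed left barrier throughout: from the pair $(b_*,x_2)$ one gets $c(x_1)\le c(x_2)$, hence equality $\hat c:=c(x_1)=c(x_2)$; then from the pair $(b_*,x_3)$ for each $x_3\in(x_1,x_2)$ one gets $\hat c=c(x_1)\le c(x_3)$, and combining with $c(x_3)\le c(x_2)=\hat c$ (from the first step) yields $c\equiv\hat c$ on $[x_1,x_2]$ directly. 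Remark~\ref{re:flat-c} together with $\hat c>m\ge0$ finishes. This argument is uniform in whether $b_*\in(a,b)$ or $b_*=a$, so no case distinction arises at all.
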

\begin{proof}
Let $x_1<x_2$ belong to $\Sigma_{[a,b]}$. Then $c(x_1)=c(x_2)=:\bar c$ and \eqref{eq:connS1} implies that $[\bar c,T]\times[x_1,x_2]\subset\cS$. Since $\bar c$ is the global minimum, then $c(x)=\bar c$ for all $x\in[x_1,x_2]$. Hence $[x_1,x_2]\in\Sigma_{[a,b]}$ and  since $x_1,x_2$ were arbitrary and $\Sigma_{[a,b]}$ is closed we conclude that $\Sigma_{[a,b]}=[a_*,b_*]$ for some $a_*\le b_*$. If $a_*<b_*$ then $c(x)=\bar c$ on $[a_*,b_*]$ and this can only occur if $\bar c=0$ (Remark \ref{re:flat-c}). 

For the final claim, assume $[a,a_*)\neq\varnothing$ and, arguing by contradiction, that there exist $x_1<x_2$ in $[a,a_*)$ such that $c(x_1)\le c(x_2)$. By definition of $a_*$ we have $c(x_1)>c(a_*)$. Then
$[c(x_1),T]\times[x_1,a_*]\subset \cS$ by \eqref{eq:connS1} and it must be $c(x_2)=c(x_1)=:\hat c$. By the same argument there cannot exist $x_3\in(x_1,x_2)$ such that  $c(x_3)<c(x_2)$ and therefore we conclude that $c(x)=\hat c$ for all $x\in[x_1,x_2]$. From Remark \ref{re:flat-c} we know that $x\mapsto c(x)$ cannot be flat, unless it is equal to zero. However, $x_2<a_*$ and therefore $\hat c=c(x_2)>c(a_*)\ge 0$. Thus we have reached a contradiction and $x\mapsto c(x)$ is strictly decreasing on $[a,a_*)$. By the same argument we can prove that the boundary is strictly increasing on $(b_*,b]$.
\end{proof}

Proposition \ref{prop:bay} holds at any point $x_0$ such that $\mu(\{x_0\})>0$, irrespective of the sign of $\mu(\ud x)$ in a neighbourhood of $x_0$. We will see in the next example that this argument, combined with Proposition \ref{prop:stop2}, can produce very peculiar shapes of the continuation set. Loosely speaking we can say that we find a {\em continuation bay} in the middle of a stopping set. 

\begin{figure}[t!]
\includegraphics[scale=0.4]{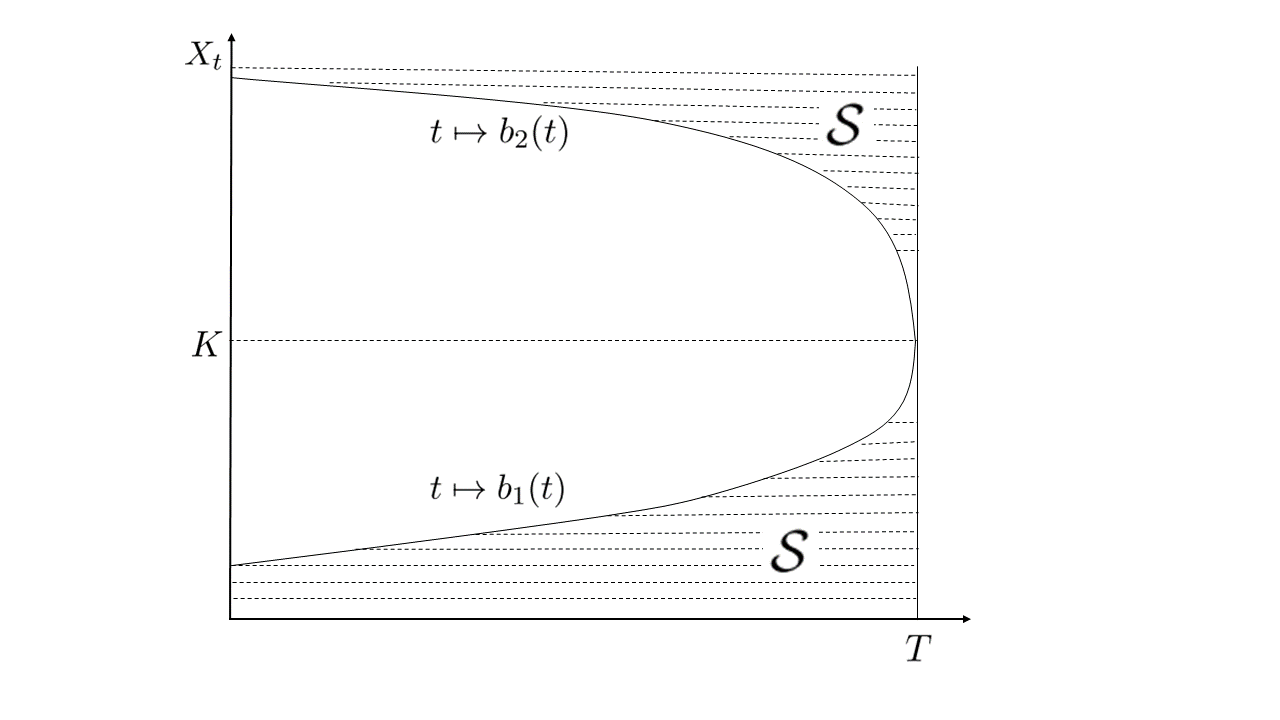}
\caption{An illustration of the continuation bay in Example \ref{ex:bays}.}
\end{figure}

\begin{example}[Continuation bays]\label{ex:bays}
{\rm
A typical example of continuation bay arises in the American straddle option (see, e.g., \cite{DE09}). Let us consider a simplified version here and let
\[
\ud X_t=\sigma X_t\, \ud B_t,\qquad X_0=x,
\] 
be the stock's dynamics with $\sigma>0$. Fix $K>0$ and $r>0$ and let us denote the value of the option by
\[
v(t,x)=\sup_{0\le \tau\le T-t}\E_x\left[\e^{-r\tau}|X_\tau-K|\right].
\]
Then, by an application of It\^o-Tanaka's formula we have 
\begin{align*}
v(t,x)-|x-K|=&\sup_{0\le \tau\le T-t}\E_x\left[\int_0^\tau \e^{-rt}\ud L^K_t-r\int_0^\tau \e^{-rt}|X_t-K|\ud t\right]\\
=&\sup_{0\le \tau\le T-t}\E_x\left[\tfrac{1}{2}\int_{\mathbb{R}_+}\ell^z_\tau \mu(\ud z)\right],
\end{align*}
where $\mu(\ud z)=2\delta_K(\ud z)-(2r/\sigma^2) z^{-2}|z-K|\ind_{\{z\neq K\}}\ud z$. 

Here we have $\Lambda_-=\R_+\setminus\{K\}=\Lambda^0_-$ and $\Lambda_+=\{K\}=\Lambda^0_+$, which is a rather `singular' situation. Intuitively, waiting is costly for the option holder at all times $t\in[0,T]$ for which $X_t\neq K$: indeed, she pays a cost at a rate $r|X_t-K|\ud t$. On the contrary, waiting is rewarding only at times $t\in[0,T]$ when $X_t=K$ and the option holder receives a reward at the `rate' of $\ud L^K_t$. As we will see shortly, it is precisely the {\em kink} in the payoff $x\mapsto|x-K|$ that guarantees $\cC\neq\varnothing$ and makes the problem mathematically non-trivial.

From (i) in Proposition \ref{prop:stop2} we obtain that $c(x)<T$ for all $x\in\R_+\setminus\{K\}$, whereas Proposition \ref{prop:bay} guarantees $c(K)=T$. By the same arguments we used to prove \eqref{eq:connS1} we can also show that for any $x>K$ we have $[c(x),T]\times [x,\infty)\in\cS$. Indeed, assume by contradiction that there exists $x'>x$ such that $(t,x')\in\cC$ for $t= c(x)$; then, $\tau^{t,x'}_*\le \inf\{s\ge 0: X^{x'}_s\le x\}$ and we obtain the analogue of \eqref{eq:mu-} with $(t_0,x_3)=(t,x')$ and $[x_1,x_2]$ replaced by $[x,\infty)$. Hence a contradiction. Likewise, we can show that for any $x\in(0,K)$, we have $[c(x),T]\times [0,x]\in\cS$. Finally, Corollary \ref{cor:monot} implies that  $c$ is strictly increasing on $(0,K)$ and strictly decreasing on $(K,\infty)$, hence it can be inverted (locally) defining two boundaries which are continuous functions of time. Indeed, let $c_1(x)=c(x)$ for $x\in(0,K)$ and $c_2(x)=c(x)$ for $x>K$, then we can set
\[
b_1(t):= c^{-1}_1(t)\qquad \text{and}\qquad b_2(t):=c^{-1}_2(t),\quad t\in[0,T).
\]
The functions $b_1$ and $b_2$ are continuous with $b_1(T)=b_2(T)=K$. 
It may be worth noticing that a one-sided version of continuation bay appears by the same argument also in the American put and call options.
}
\end{example}

A reverse situation is observed at points $x_0$ such that $\mu(\{x_0\})<0$. In this case, if $\mu(\ud x)>0$ on a neighbourhood of $x_0$, we observe a {\em stopping spike} in the middle of the continuation region. This type of geometry of the stopping set is almost unique and certainly not very popular in the literature. The only examples of a similar geometry that we are aware of appear in \cite{Ped03} and \cite{DK18} but the settings are different: in both references the gain function is time-dependent and in \cite{Ped03} it is discontinuous in the spatial variable whereas in \cite{DK18} it is discontinuous in the time variable. So it is difficult to draw a clear parallel. More closely related is the situation of game call options where, for some parameter choice, the option seller will only stop if the underlying asset's value equals the strike price (see, e.g., \cite{EV, Em, YYZ}).

This time we need to recall (ii) from Proposition \ref{prop:stop2}. 

\begin{figure}[h!]
\includegraphics[scale=0.4]{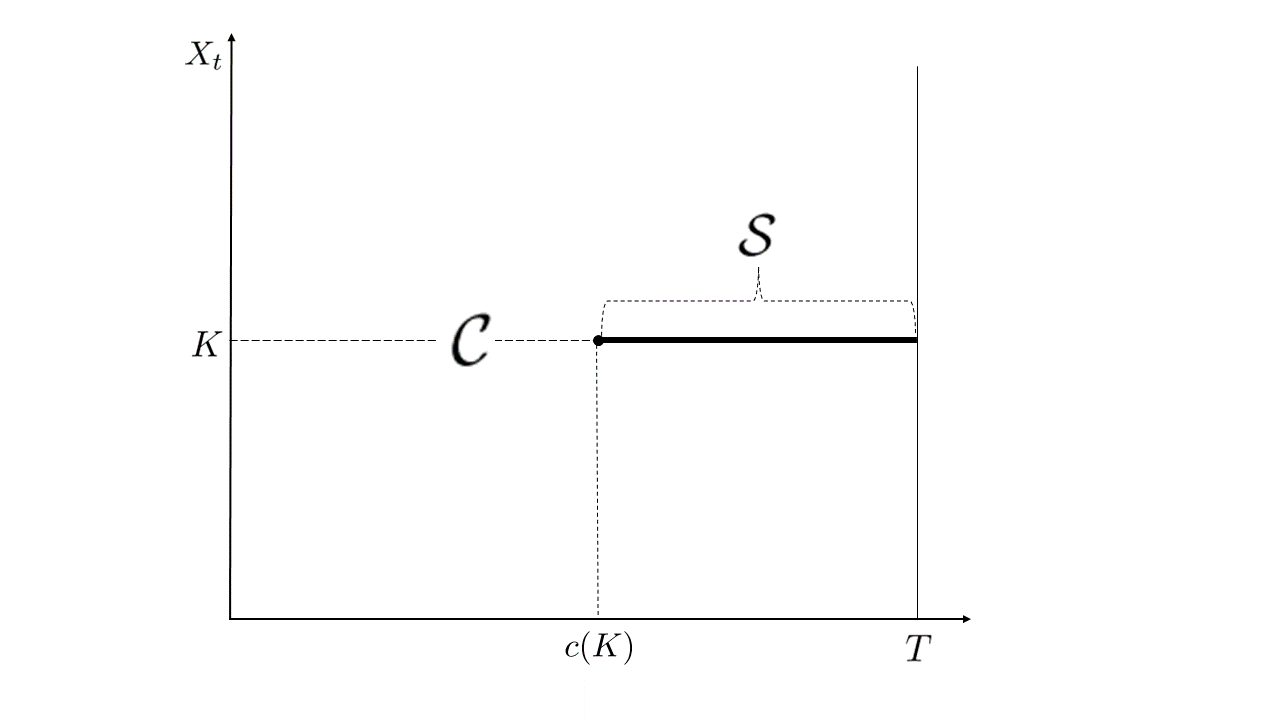}
\caption{An illustration of the stopping spike in Example \ref{ex:spikes}.}
\end{figure}

\begin{example}[Stopping spikes]\label{ex:spikes}
{\rm
For simplicity we consider a converse to Example \ref{ex:bays}. That is, we take
\[
\ud X_t=\sigma X_t\, \ud B_t,\qquad X_0=x,
\] 
and, for a fixed $\eta_0>0$, we consider 
\[
\hat v(t,x)=\inf_{0\le \tau\le T-t}\E_x\left[\e^{-r\tau}\left(|X_\tau-K|+\eta_0\right)\right].
\]
Here the stopper may be the seller of a cancellable straddle option of European type, who must pay a fee of $\eta_0$ (in addition to the option's current payoff) in order to cancel the contract. Although the problem is stated as a minimisation, it is clear that it is equivalent to
\[
-\hat v(t,x)=v(t,x)=\sup_{0\le \tau\le T-t}\E_x\left[\e^{-r\tau}\left(-|X_\tau-K|-\eta_0\right)\right]
\]
so that our arguments apply directly with $g(x)=-|x-K|-\eta_0$. In particular, by the same calculations as in Example \ref{ex:bays} we obtain $\Lambda_+=\R_+\setminus\{K\}=\Lambda^0_+$, which gives us 
\[
[0,T)\times\big(\R_+\setminus\{K\}\big)\subset\cC.
\]
So $\cS\subseteq[0,T]\times\{K\}$ and, by (ii) in Proposition \ref{prop:stop2}, we know that $c(K)<T$. Hence,  
\[
\cS=[c(K),T]\times\{K\}
\]
is just a {\em spike} in the continuation region. 

Notice that, due to discounting and to the presence of a cancellation fee $\eta_0>0$, if $T$ is sufficiently large we expect $c(K)>0$, as stopping at $K$ is not necessarily optimal if the time to maturity is long.}
\end{example}

\section{Continuity of the boundary}\label{sec:cont}
Here we address the question of continuity of the map $x\mapsto c(x)$ and its link to strict monotonicity of time-dependent optimal stopping boundaries. For $\alpha\in(0,1)$ we denote $C^{1,\alpha}([a,b])$ the class of continuously differentiable functions on $[a,b]$ whose first order derivative is also $\alpha$-H\"older continuous on $[a,b]$. Throughout the section we need to invoke Theorem \ref{thm:C1-time} several times, so we state the next assumption:
\begin{assumption}\label{ass:const}
The bound in \eqref{eq:boundTD} holds with a constant $\kappa=\kappa(t,x)>0$ which is uniform for $(t,x)$ on compact subsets of $[0,T)\times\cI$.
\end{assumption}

\begin{theorem}\label{thm:non-flat}
Let Assumptions \ref{ass:cont-v} and \ref{ass:const} hold. If $(a,b)\subset\Lambda^0_-$ and $\sigma,r,g\in C^{1,\alpha}([a,b])$ for some $\alpha\in(0,1)$, then $x\mapsto c(x)$ is continuous on $(a,b)$.
\end{theorem}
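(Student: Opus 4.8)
\emph{Reduction to a single jump.} Recall that $c$ is lower semi-continuous by Proposition~\ref{prop:boundary1} and, since here $(a,b)\subset\Lambda^0_-$ and $r,\sigma\in C^\alpha([a,b])$, Corollary~\ref{cor:monot} applies: writing $\Sigma_{[a,b]}=[a_*,b_*]$, the map $c$ is strictly decreasing on $[a,a_*)$, strictly increasing on $(b_*,b]$ and identically $0$ on $[a_*,b_*]$ (if $a_*<b_*$). Continuity on the flat middle piece is trivial, so it suffices to exclude a jump on one of the strictly monotone branches; by symmetry I treat the decreasing branch. Suppose then, for contradiction, that $c$ is discontinuous at some $x_0\in[a,a_*]$. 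Combining lower semi-continuity with strict monotonicity forces $c(x_0^+)=c(x_0)=:\gamma<\beta:=c(x_0^-)$ and $c(x)>\beta$ for every $x\in(a,x_0)$; note $\beta<T$ since $c<T$ on $(a,b)$ by Proposition~\ref{prop:stop2}(i). Geometrically this says: for every $\delta\in(0,x_0-a)$ the half-open rectangle $R:=(\gamma,\beta)\times(x_0-\delta,x_0)$ lies in $\cC$, while the vertical segment $W:=\{x_0\}\times[\gamma,T]$ lies in $\cS$, i.e. the optimal boundary is \emph{flat}, at the level $x_0$, over the whole nondegenerate time-window $[\gamma,\beta]$. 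Excluding this configuration is exactly what upgrades continuity of $t\mapsto b(t)$ to strict monotonicity.

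\emph{Regularity up to $W$ and the planned contradiction.} On $R$ Corollary~\ref{cor:PDE} and Remark~\ref{rem:loc-reg} give $v\in C^{1,2}(\cC_{(a,b)})$ with $\partial_tv+\tfrac12\sigma^2\partial_{xx}v=rv$ there (\eqref{eq:PDEv}), while Theorem~\ref{thm:C1-time} (applicable under Assumption~\ref{ass:const}) and Corollary~\ref{cor:C1-space} show that $\partial_tv$, $\partial_xv$ and $\partial_{xx}v$ extend continuously up to the interior of $W$. Since $v\equiv g(x_0)$ on $W$ we get $\partial_tv\equiv 0$ on $W$, and then letting $x\uparrow x_0$ in \eqref{eq:PDEv} yields the trace identity $\partial_{xx}v(t,x_0^-)=2\sigma^{-2}(x_0)r(x_0)g(x_0)$ for $t\in(\gamma,\beta)$, constant in $t$. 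Using time-monotonicity $\partial_tv\le 0$ (\eqref{eq:monot-u}), the PDE gives $\partial_{xx}v\ge 2\sigma^{-2}r\,v$ on $R$; recalling $\tfrac12\sigma^2g''-rg=\tfrac12\sigma^2\mu$ as a measure, each slice $x\mapsto u(t,x)$, $t\in(\gamma,\beta)$, satisfies $\tfrac12\sigma^2u_{xx}-ru\ge\tfrac12\sigma^2\mu^-\ge 0$ on $(x_0-\delta,x_0)$ in the sense of measures (in particular it is convex), with $u>0$ on $R$ and $u=0$ on $W$. The plan is to turn these facts into a contradiction with $x_0\in\Lambda^0_-$ via a comparison with the diffusion $\widehat X$ obtained by reflecting $X$ at $x_0$, in the spirit announced in the Introduction and paralleling the PDE argument behind Proposition~\ref{prop:stop2}(i) and \cite{DeA15}. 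Concretely, for a starting point $(t,x)$ with $t\in(\gamma,\beta)$ and $x$ just below $x_0$, the minimal optimal stopping time coincides with the first hitting time of $x_0$ (because $W\subset\cS$ over the whole interval $[\gamma,T]$), so continuing with the reflected dynamics over a short additional period is admissible and sub-optimal; evaluating the corresponding loss through the It\^o--Tanaka representation \eqref{eq:u} one checks that the terms carried along $W$ are immaterial, since $g\in C^{1,\alpha}$ makes $\mu$ non-atomic at $x_0$, whereas the \emph{negative} $\mu$-mass accumulated by the reflected path in a neighbourhood of $x_0$ — available precisely because the flat window $[\gamma,\beta]$ has positive length and $x_0\in\Lambda^0_-$ — produces a strictly negative contribution, contradicting the optimality of stopping on $W$.

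\emph{Main obstacle.} The delicate point is this last step. In the time-parametrised picture a boundary jump produces a \emph{horizontal} flat piece of $\partial\cC$, along which the trace of the parabolic operator applied to $v$ equals $\tfrac12\sigma^2\mu$, and one gets a contradiction by differentiating $t\mapsto\int v(t,\cdot)\varphi$ in time (the mechanism of \cite{DeA15} and of \eqref{eq:nonempty}). Here the flat piece $W$ is \emph{vertical}: $v$ is constant in $t$ on $W$, hence $\partial_tv\equiv 0$ there, and the naive transcription of that argument (integrating the PDE in $t$ against a test function supported in $(\gamma,\beta)$ and letting $x\uparrow x_0$) collapses to the trivial identity $2\sigma^{-2}(x_0)r(x_0)g(x_0)=2\sigma^{-2}(x_0)r(x_0)g(x_0)$. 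Extracting a genuine contradiction therefore \emph{cannot} be done at the level of the local trace at $x_0$ alone; it must exploit the length $\beta-\gamma>0$ of the time interval over which the boundary is flat — which is exactly what the reflection argument is designed to capture, and why the statement is really a strict-monotonicity result rather than a mere continuity one. This is also where the $C^{1,\alpha}$-regularity of $g,\sigma,r$ is genuinely used: it is needed both for the continuous extension of $\partial_{xx}v$ up to $W$ (Corollary~\ref{cor:C1-space}) and for the smooth-fit-type control of $u=v-g$ near $x_0$. A secondary, purely technical, issue is to keep the local-time integrals in \eqref{eq:u} uniformly integrable for starting points near $W$; as in the proofs of Lemma~\ref{lem:tech} and Proposition~\ref{prop:stop2} this is handled by stopping $X$ (and $\widehat X$) on exiting a small neighbourhood of $x_0$ contained in $(a,b)$.
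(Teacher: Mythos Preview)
Your reduction is correct and matches the paper: by Corollary~\ref{cor:monot} and lower semi-continuity one is left with ruling out a vertical flat piece $W=\{x_0\}\times[\gamma,\beta]\subset\partial\cC$. You also correctly identify that the naive transcription of the \cite{DeA15} argument collapses on $W$, and that a reflected diffusion is the right tool.

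The gap is in the contradiction mechanism itself. Your plan is to apply the reflection to the value $u$ via \eqref{eq:u} and argue that ``the negative $\mu$-mass accumulated by the reflected path \ldots\ produces a strictly negative contribution, contradicting the optimality of stopping on $W$''. This heuristic is backwards: on $(a,b)\subset\Lambda^0_-$ the measure $\mu$ is negative, so continuing past $W$ (with or without reflection) makes the Lagrange functional \emph{smaller}, which \emph{confirms} rather than contradicts that stopping on $W$ is optimal. No contradiction is obtained from $u$ this way, and no alternative mechanism is supplied.

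What the paper does is apply the reflection not to $v$ but to $\hat v:=\partial_t v$, which itself solves the PDE \eqref{eq:PDEv} in $\cC_{(a,b)}$. The two lemmas you do not invoke are the engine: Lemma~\ref{lem:ut} gives a point $x_1$ in $\cC$ (on the continuation side of $x_0$) and $\eps_1>0$ with $\partial_t v(\cdot,x_1)\le-\eps_1$ on $[t_0,t_1]\subset(\gamma,\beta)$, while Lemma~\ref{lem:sm-f} together with Theorem~\ref{thm:C1-time} yields $v\in C^1$ up to $W$, so that for any $\eps>0$ one can choose $\delta_\eps$ with $0\ge\partial_t v\ge-\eps$ and $|\partial_x v-g'|\le\eps$ on $[t_0,t_1]\times[x_0,x_0+\delta_\eps]$. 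The process reflected at $x_0+\delta_\eps$ and absorbed at $x_1$ is then run through It\^o's formula applied to $\hat v$; the boundary term at $x_0+\delta_\eps$ involves $\partial_{tx}v$ integrated against the reflection increment $\ud R^{\delta_\eps}$, and this is handled by integrating against a test function $\varphi\in C^\infty_c(t_0,t_1)$ and using integration by parts in time to convert $\partial_{tx}v$ into $\partial_x v\approx g'$, which then cancels. What survives in the limit $\eps\downarrow0$ is the inequality $-\bar\eps_1\int\P(\tau_1^\eps<t_1-t)\varphi(t)\,\ud t\ge 0$, contradicted by the strict positivity of the hitting probability of $x_1$ for the reflected Brownian motion. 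In short, the contradiction comes from the \emph{gap between $\partial_t v\approx 0$ near $W$ and $\partial_t v\le-\eps_1$ at an interior point}, not from the sign of $\mu$; your sketch never produces a quantity that must be simultaneously $\ge 0$ and $<0$.
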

The proof of the theorem hinges on the following two lemmas. 
\begin{lemma}\label{lem:ut}
Let Assumption \ref{ass:cont-v} and \ref{ass:const} hold. If $r,\sigma\in C^\alpha([a,b])$ for some $\alpha\in(0,1)$, then $\partial_t v(t,x)<0$ for all $(t,x)\in \cC$ with $x\in(a,b)$.
\end{lemma}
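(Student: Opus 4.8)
The plan is to apply a strong parabolic maximum principle to $w:=\partial_t v$ on $\cC_{(a,b)}$. By Corollary~\ref{cor:PDE} and Remark~\ref{rem:loc-reg}, the assumption $r,\sigma\in C^\alpha([a,b])$ yields $v\in C^{1,2}(\cC_{(a,b)})$ with $\partial_t v+\tfrac12\sigma^2\partial_{xx}v=rv$ there, and the monotonicity \eqref{eq:monot-u} gives $w\le 0$ on $\cC_{(a,b)}$. The first task is to show that $w$ is itself a classical solution of the same equation. Because $\sigma^2(\cdot)$ and $r(\cdot)$ do not depend on time, $v(\cdot+h,\cdot)$ solves the same PDE wherever it is defined; hence on any rectangle $Q$ with $\overline Q\subset\cC_{(a,b)}$ chosen small enough that also $\{(t+h,x):(t,x)\in\overline Q\}\subset\cC_{(a,b)}$ (possible for small $h$ by lower semi-continuity of $c$), the incremental quotient $\phi_h:=h^{-1}\big(v(\cdot+h,\cdot)-v(\cdot,\cdot)\big)$ solves $\partial_t\phi_h+\tfrac12\sigma^2\partial_{xx}\phi_h=r\phi_h$ on $Q$. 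Interior Schauder estimates for parabolic equations with $C^\alpha$ coefficients bound $\|\phi_h\|_{C^{1+\alpha/2,\,2+\alpha}(Q')}$ for $Q'\Subset Q$ in terms of $\sup_Q|\phi_h|$, which is dominated by $\sup|\partial_t v|$ over a fixed compact neighbourhood of $\overline Q$, uniformly in $h$; letting $h\downarrow 0$ and using $\phi_h\to\partial_t v$ we obtain $w\in C^{1,2}(\cC_{(a,b)})$ with
\[
\partial_t w+\tfrac12\sigma^2\partial_{xx}w=rw\qquad\text{on }\cC_{(a,b)}.
\]

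Next, fix $(t_0,x_0)\in\cC$ with $x_0\in(a,b)$ and suppose, aiming at a contradiction, that $\partial_t v(t_0,x_0)=0$. After the change of variable $\tau=T-t$ the equation for $w$ becomes a forward parabolic equation with uniformly elliptic principal part (by Assumption~\ref{ass:sigma1}) and zeroth-order coefficient $-r\le 0$. Since $w\le 0$ and $w(t_0,x_0)=0$, the function $w$ attains its (non-negative) maximum value $0$ at a point which is interior in the spatial variable; the strong maximum principle for parabolic equations then forces $w\equiv 0$ on the subset of the connected component of $\cC_{(a,b)}$ through $(t_0,x_0)$ consisting of points reachable from $(t_0,x_0)$ along a path with non-decreasing $t$-coordinate. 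In particular, since the vertical segment $\{(t,x_0):t\in[t_0,c(x_0))\}$ is such a set, we get $\partial_t v(t,x_0)=0$ for every $t\in[t_0,c(x_0))$ (the case $t_0=0$ is identical, the value being propagated towards larger $t$).

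Integrating in $t$ gives $v(t,x_0)=v(t_0,x_0)$ for all $t\in[t_0,c(x_0))$; letting $t\uparrow c(x_0)$ and invoking continuity of $v$ (Assumption~\ref{ass:cont-v}) together with Proposition~\ref{prop:boundary1} — so that $(c(x_0),x_0)\in\cS$, respectively $(T,x_0)\in\cS$ if $c(x_0)=T$ — yields $v(t_0,x_0)=g(x_0)$. Hence $(t_0,x_0)\in\cS$, contradicting $(t_0,x_0)\in\cC$. Therefore $\partial_t v(t_0,x_0)<0$, and since $(t_0,x_0)$ was an arbitrary point of $\cC$ with $x_0\in(a,b)$ the lemma follows.

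I expect the main obstacle to be the first paragraph: verifying that $w=\partial_t v$ is a genuine (classical) solution of the parabolic equation, so that the strong maximum principle is legitimately available. This relies on the time-homogeneity of the coefficients together with interior parabolic regularity, and some care is needed in choosing the approximating rectangles strictly inside $\cC$ (below the graph of $c$) and in handling the edge case $t_0=0$. A secondary bookkeeping point is the time-orientation: the equation is backward parabolic, so the vanishing of $w$ at $(t_0,x_0)$ propagates forward in $t$, which is exactly what is needed to reach the boundary point $(c(x_0),x_0)$ and force $v(\cdot,x_0)\equiv g(x_0)$ there.
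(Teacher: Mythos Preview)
Your proof is correct and follows the same underlying idea as the paper's: both arguments rest on the fact that $\hat v:=\partial_t v$ is a classical solution of the same parabolic equation as $v$ on $\cC_{(a,b)}$, and then exploit a maximum principle. The executions differ slightly. The paper first observes (using $v(t_0,x_0)>g(x_0)=v(T,x_0)$) that there must exist $t_1\in(t_0,T)$ with $(t_1,x_0)\in\cC$ and $\partial_t v(t_1,x_0)<-\eps$; it then works on a rectangle $\cO=(t_0,t_1)\times(x_0-\delta,x_0+\delta)$ and applies Dynkin's formula (a probabilistic Feynman--Kac representation, i.e.\ a weak maximum principle) to $\hat v$ to obtain directly $0=\hat v(t_0,x_0)\le -\tfrac{\eps}{2}\e^{-\bar r T}\P_{x_0}(\tau_\cO=t_1-t_0)<0$. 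You instead invoke the PDE strong maximum principle to propagate $\hat v\equiv 0$ forward in $t$ along the segment $\{(t,x_0):t\in[t_0,c(x_0))\}$, integrate, and contradict $v(t_0,x_0)>g(x_0)$ at the boundary. Your route is marginally cleaner in that it does not require locating the auxiliary point $t_1$, at the cost of invoking the strong maximum principle and the boundary value of $v$; the paper's route is more self-contained probabilistically. Both derivations of $\hat v\in C^{1,2}$ are standard (the paper simply cites interior regularity from Friedman, whereas you reprove it via time-shifted difference quotients and Schauder bounds).
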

The proof is essentially an application of the maximum principle and we give it in Appendix for completeness. 
\begin{lemma}\label{lem:sm-f}
Let Assumption \ref{ass:cont-v} and \ref{ass:const} hold and let $(a,b)\subset\Lambda^0_-$. If $\sigma, r, g\in C^{1,\alpha}([a,b])$ for some $\alpha\in(0,1)$ then $\partial_x v$ is continuous on $(\hat c,T)\times(a,b)$ where $\hat c=\min_{x\in[a,b]}c(x)$.
\end{lemma}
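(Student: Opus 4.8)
Since the statement is local, I would fix an open interval $\cJ$ with $\overline\cJ\subset(a,b)$ and argue on $(\hat c,T)\times\cJ$. On $\cC\cap\big((\hat c,T)\times\cJ\big)$ Corollary~\ref{cor:PDE} gives $v\in C^{1,2}$, so $\partial_x v$ is continuous there; on $\mathrm{int}(\cS)\cap\big((\hat c,T)\times\cJ\big)$ we have $v=g$, hence $\partial_x v=g'$, continuous because $g\in C^{1,\alpha}$. The whole point is therefore continuity of $\partial_x v$ across $\partial\cC$. I would first record two preparatory facts. Under Assumptions~\ref{ass:cont-v} and \ref{ass:const} with $\sigma,r\in C^\alpha(\overline\cJ)$, Corollary~\ref{cor:C1-space} shows $\partial_{xx}v$ extends to a continuous, hence locally bounded, function on $\overline{\cC\cap([0,T-\delta]\times\cJ)}$ for every $\delta>0$; in particular, for each fixed $t<T$, $\partial_x v(t,\cdot)$ extends continuously to the closure of every connected component of the slice $\{x:(t,x)\in\cC\}$. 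Moreover, by Corollary~\ref{cor:monot} the map $x\mapsto c(x)$ is valley-shaped on $(a,b)$, so any $(t_0,x_0)\in\partial\cC$ with $x_0\in(a,b)$ and $t_0\in(\hat c,T)$ satisfies $c(x_0)=t_0>\hat c$, whence $x_0$ lies in a sub-interval of $(a,b)$ on which $c$ is \emph{strictly} monotone and $\cC$ lies locally on exactly one side of $x_0$ in the $x$-direction.

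The core is a smooth-fit identity: for each such $(t_0,x_0)\in\partial\cC$ (say $\cC$ to the left of $x_0$; the other case is symmetric) one has $\lim_{\delta\downarrow0}\big(v(t_0,x_0)-v(t_0,x_0-\delta)\big)/\delta=g'(x_0)$. The upper bound is immediate from $v\ge g$ and $v(t_0,x_0)=g(x_0)$. For the lower bound I would take $\tau_\delta=\tau_*^{t_0,x_0-\delta}$ optimal for $v(t_0,x_0-\delta)$; since $(t_0,x_0-\delta)\in\cC$, Proposition~\ref{prop:reg-b} forces $\tau_\delta\to0$ in probability as $\delta\downarrow0$. Coupling $X^{x_0}$ and $X^{x_0-\delta}$ through the same $B$ and using that $\tau_\delta$ is admissible for $v(t_0,x_0)$,
\[
\frac{v(t_0,x_0)-v(t_0,x_0-\delta)}{\delta}\ge\E\!\left[\frac{\e^{-\int_0^{\tau_\delta}r(X^{x_0}_s)\ud s}g(X^{x_0}_{\tau_\delta})-\e^{-\int_0^{\tau_\delta}r(X^{x_0-\delta}_s)\ud s}g(X^{x_0-\delta}_{\tau_\delta})}{\delta}\right].
\]
Expanding the numerator, using that $g,\sigma,r$ are locally Lipschitz (being $C^{1,\alpha}$), that $(X^{x_0}_t-X^{x_0-\delta}_t)/\delta$ converges locally uniformly to the first-variation process $\cE$ with $\ud\cE_t=\sigma'(X^{x_0}_t)\cE_t\,\ud B_t$ and $\cE_0=1$, and that $\tau_\delta\to0$, the integrand tends to $g'(x_0)$; passing to the limit by Fatou (after a localisation of the coupled diffusions near $x_0$) gives $\liminf_{\delta\downarrow0}\big(v(t_0,x_0)-v(t_0,x_0-\delta)\big)/\delta\ge g'(x_0)$. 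Since $v$ is continuous and, by the first preparatory fact, $\partial_x v(t_0,x_0-\delta)$ has a limit $\ell$ as $\delta\downarrow0$, one gets $\ell=g'(x_0)$; combined with $\partial_x v=g'$ on the $\cS$ side, $v(t_0,\cdot)$ is differentiable at $x_0$ with $\partial_x v(t_0,x_0)=g'(x_0)$.

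It then remains to upgrade this to joint continuity of $\partial_x v$. Let $(t_n,x_n)\to(t_0,x_0)\in\partial\cC$ with $x_0\in(a,b)$, $t_0\in(\hat c,T)$. If $(t_n,x_n)\in\cS$ along a subsequence, then $\partial_x v(t_n,x_n)=g'(x_n)\to g'(x_0)$. If $(t_n,x_n)\in\cC$, let $(\alpha_n,\gamma_n)$ be the component of $\{x:(t_n,x)\in\cC\}$ containing $x_n$; strict monotonicity of $c$ near $x_0$ excludes that $x_n$ remains bounded away from both $\alpha_n$ and $\gamma_n$ (that would force $c\ge c(x_0)$ on a full neighbourhood of $x_0$), so there is an endpoint $\beta_n\in\{\alpha_n,\gamma_n\}$ with $\beta_n\to x_0$, and for $n$ large $(t_n,\beta_n)\in\partial\cC$, $\beta_n\in(a,b)$, $t_n\in(\hat c,T)$. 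The smooth-fit identity at $(t_n,\beta_n)$ together with the fundamental theorem of calculus along the $t_n$-slice (valid since $v(t_n,\cdot)\in C^2$ on $(\alpha_n,\gamma_n)$ with $\partial_{xx}v$ bounded up to the endpoints) gives $\big|\partial_x v(t_n,x_n)-g'(\beta_n)\big|\le C|x_n-\beta_n|\to0$, so $\partial_x v(t_n,x_n)\to g'(x_0)=\partial_x v(t_0,x_0)$. This yields continuity of $\partial_x v$ at every boundary point and completes the proof.

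The step I expect to be the main obstacle is the lower bound in the smooth-fit part, i.e.\ the rigorous interchange of limit and expectation in the displayed inequality: it hinges on the boundary-regularity input $\tau_\delta\to0$ from Proposition~\ref{prop:reg-b}, on the behaviour of the first-variation processes near time zero, and---because $g,\sigma,r$ are controlled only locally---on localising the coupled diffusions near $x_0$ and invoking the standing integrability assumptions to discard the contribution of the event $\{\tau_\delta>\eps\}$.
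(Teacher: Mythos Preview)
Your outline uses the same three ingredients as the paper's proof---the continuous extension of $\partial_{xx}v$ from Corollary~\ref{cor:C1-space}, boundary regularity from Proposition~\ref{prop:reg-b}, and strict piecewise monotonicity of $c$ from Corollary~\ref{cor:monot}---but the architecture is different. You first prove smooth fit at a fixed $(t_0,x_0)\in\partial\cC$, then bootstrap to joint continuity by applying smooth fit at nearby boundary points $(t_n,\beta_n)$ and the fundamental theorem of calculus with the uniform $\partial_{xx}v$ bound. The paper instead writes $v(t,x+\eps)-v(t,x)$ at a generic $(t,x)\in\cC$ through the (super)martingale property stopped at $\tau_*^{t,x}\wedge\rho_\eps$, derives explicit upper and lower bounds for $\partial_x v(t,x)$, and only then lets $(t_n,x_n)\to(t_0,x_0)$. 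Your bootstrap step is neater; the price is that your smooth-fit step must absorb all the analytic difficulty.

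That step has a genuine gap as written. In the displayed inequality you keep the full $\tau_\delta$ and then invoke ``Fatou after a localisation of the coupled diffusions near $x_0$'' to get $\liminf\ge g'(x_0)$. But a spatial localisation means stopping at $\tau_\delta\wedge\rho$, and on $\{\tau_\delta>\rho\}$ the martingale property replaces the $g$-difference by a $v$-difference $v(t_0+\rho,X^{x_0}_\rho)-v(t_0+\rho,X^{x_0-\delta}_\rho)$, divided by $\delta$. You cannot bound this by the local Lipschitz constant of $g$, and a bare Fatou argument fails because the integrand is not bounded below by anything integrable. The paper handles exactly this by choosing the localisation interval $(a',b')$ with $a'<a_*<b'$, using strict monotonicity of $c$ to force the exit on $\{\tau_*>\rho_\eps\}$ through the $\cC$-side at a fixed level $b'$, and then bounding the $v$-difference via the fundamental theorem of calculus along $[X^x_{\rho_\eps},b'']$ with the continuous $\partial_{xx}v$ (cf.\ \eqref{eq:boundC}). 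You have the $\partial_{xx}v$ bound available, but your sketch does not explain how the localisation error is controlled, and without that the lower bound does not follow.

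An alternative that sidesteps the Fatou issue entirely---and is in fact how the paper closes its own argument---is a local-time contradiction. Once your first preparatory fact gives the one-sided limit $\ell=\partial_x v(t_0,x_0-)$ and your easy upper bound yields $\ell\le g'(x_0)$, suppose $\theta:=g'(x_0)-\ell>0$. Then $\partial_{xx}v(t_0,\ud x)$ has a positive atom $\theta\,\delta_{x_0}$ and is absolutely continuous elsewhere on $(a,b)$ (since $g\in C^1$ and \eqref{eq:vxx} holds). Applying the supermartingale property of $v$ and It\^o--Tanaka--Meyer exactly as in Proposition~\ref{prop:bay}, together with the estimates of Lemma~\ref{lem:tech}, produces $0\ge\theta\,\E_{x_0}[\ell^{x_0}_{s\wedge\zeta_0}]-o(\sqrt s)-2\kappa s>0$ for small $s$, a contradiction (see \eqref{eq:meyer}). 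Plugging this into your scheme makes the whole argument go through.
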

The proof is inspired by \cite{DeAPe} but we cannot directly invoke any of the results therein due to the local nature of our assumptions. However, if we strengthen the requirements in the lemma to, e.g., $\sigma,r,g\in C^1_b(\cI)$, then \cite[Theorem 10]{DeAPe} applies directly yielding $v\in C^1([0,T)\times\cI)$. In several practical applications Lemma \ref{lem:sm-f} may be better suited and therefore we give a full proof in Appendix.

\begin{proof}[Proof of Theorem \ref{thm:non-flat}]
Since $x\mapsto c(x)$ changes its monotonicity at most once on $(a,b)$ (Corollary \ref{cor:monot}) and it is lower semi-continuous, we only need to rule out discontinuities of the first kind. In particular, with no loss of generality we may assume that $c$ is strictly increasing on $(a,b)$ as the argument is analogous for decreasing boundaries and combining the two we can handle the general case.

First we notice that since $c$ is strictly increasing and lower semi-continuous on $(a,b)$, then it must be left-continuous. It then remains to prove that it is also right-continuous.
Arguing by contradiction let us assume that there exists $x_0\in[a,b]$ such that $c(x_0)<c(x_0+)$. Then $(c(x_0),c(x_0+))\times\{x_0\}\subset\partial\cC$ and there exists $x_1>x_0$ and $\eps_1>0$ such that 
\begin{align}\label{eq:vt-e1}
\partial_t v(t,x_1)\le -\eps_1\quad\text{for all $t\in[t_0,t_1]\subset(c(x_0),c(x_0+))$}
\end{align}
thanks to Lemma \ref{lem:ut} and the fact that $\partial_t v$ is continuous.
Setting $\hat c=\min_{x\in[a,b]}c(x)$ and combining Lemma \ref{lem:sm-f} and Theorem \ref{thm:C1-time} (recall also Remark \ref{rem:C1}) we can also conclude that $v\in C^1((\hat c,T)\times (a,b))$. Then for any $\eps>0$ there exists $\delta_{\varepsilon}>0$ such that $x_0+\delta_\eps<x_1$ and
\begin{align}\label{eq:bdd}
0 \ge\partial_t v \ge -\varepsilon\quad\text{and}\quad |\partial_x v-\partial_x g| \leq \varepsilon\quad
\text{on $[t_0,t_1] \times [x_0,x_0+\delta_{\varepsilon}]$,}
\end{align}
by uniform continuity on any compact. Classical results on interior regularity for solutions of PDEs guarantee $\partial_t v\in C^{1,2}\big((t_0,t_1)\times(x_0,x_1)\big)$ and
\begin{align}\label{eq:PDEvt}
\partial_{tt}v(t,x)+\frac{\sigma^2(x)}{2}\partial_{xxt} v(t,x)=r(x)\partial_t v(t,x),\quad(t,x)\in (t_0,t_1)\times(x_0,x_1)
\end{align}
(see, e.g., \cite[Thm.~10, Ch.~3, Sec.~5]{Friedman}).

Since $v=g$ on $\big(c(x_0),c(x_0+)\big)\times\{x_0\}$, we may expect that $v_{tx}$ be continuous at $(c(x_0),c(x_0+))\times\{x_0\}$ and equal to zero. From a PDE perspective that would enable the use of Hopf's lemma to reach a contradiction. Here instead we present a probabilistic analogue based on the construction of a process which is normally reflected `near' the discontinuity of the boundary. This approach avoids to deal with continuity {\em up to the optimal boundary} of the value function's derivatives of order greater than one.

On the interval $[x_0+\delta_{\eps},x_1)$ we consider a process that is equal to $(X_t)_{t \ge 0}$ away from $x_0+\delta_{\varepsilon}$, it is reflected (upwards) at $x_0+\delta_{\varepsilon}$ and it gets absorbed at $x_1$. For the construction of such process we extend the diffusion coefficient $\sigma$ outside $(a,b)$ to be $C^1_b(\R)$ and strictly separated from zero. With a slight abuse of notation let us denote such extension again by $\sigma$. Then, it is well-known (see, e.g., \cite{LS84} or \cite[Sec.~12, Chapter I]{Bass}) that there exists a unique strong solution of the stochastic differential equation
\begin{align*}
\ud \XX_t^{\varepsilon}=\sigma(\XX_t^{\varepsilon}) \ud B_t+\ud R^{\delta_\eps}_t, \qquad\qquad
\XX_{0}^{\varepsilon} = x_0+\delta_{\varepsilon},
\end{align*}
where $R^{\delta_{\varepsilon}}$ is a continuous, non-decreasing process, with $R^{\delta_\eps}_0=0$, that guarantees
\begin{align}\label{eq:SK}
&\XX_t^{\varepsilon} \ge x_0+\delta_{\varepsilon}\quad\text{and}\quad \ud R_{t}^{\delta_{\varepsilon}}= \ind_{\{\XX^\eps_t=x_0+\delta_\eps\}}\ud R_{t}^{\delta_{\varepsilon}}\quad\text{for all $t\ge 0$, $\P$-a.s.}
\end{align}
Setting $\tau^\eps_1=\inf\{s\ge0\,:\,\XX^{\varepsilon}_s =x_1\}$ the absorbed process is obtained as $(\XX^\eps_{t\wedge \tau^\eps_1})_{t\ge 0}$.

Letting $\hat v:= \partial_t v$ we can apply It\^o's formula for semi-martingales and use \eqref{eq:PDEvt} to obtain, for any $t\in(t_0,t_1)$
\begin{align}\label{eq:cont0}
\E&\! \left[\e^{-\int_0^{\tau^\eps_1\wedge(t_1-t)}r(\XX^\eps_s)\ud s}\, \hat v\big(t+\tau^\eps_1\wedge(t_1-t),\XX_{\tau^\eps_1\wedge(t_1-t)}^{\varepsilon}\big)\right]\notag\\
    =&\, \hat v(t,x_0+\delta_{\eps})+\E\left[\int_0^{\tau^\eps_1\wedge(t_1-t)}\e^{-\int_0^{s}r(\XX^\eps_u)\ud u}\,\partial_x \hat v(t+s,\XX_{s}^{\eps}) \,\ud R^{\delta_\eps}_{s}\right]\\
\ge& - \eps +\E\left[\int_0^{\tau^\eps_1\wedge(t_1-t)}\e^{-\int_0^{s}r(\XX^\eps_u)\ud u}\,\partial_{tx}v(t+s,x_0+\delta_{\varepsilon})\, \ud R_{s}^{\delta_{\varepsilon}}\right]\notag
\end{align}
where the inequality follows from \eqref{eq:bdd} and for the term under expectation we use \eqref{eq:SK}.
For the expression on the left-hand side of \eqref{eq:cont0}, denoting $\bar r=\sup_{x\in[x_0,x_1]}r(x)$ and recalling that $\partial_t v\le 0$, thanks to \eqref{eq:vt-e1} we have
\begin{align*}
&\E \left[ \e^{-\int_0^{\tau^\eps_1\wedge(t_1-t)}r(\XX^\eps_s)\ud s}\, \hat v\big(t+\tau^\eps_1\wedge(t_1-t),\XX_{\tau^\eps_1\wedge(t_1-t)}^{\varepsilon}\big)\right]\\
&\leq\e^{-\bar r\, T} \E\left[\ind_{\{\tau^\eps_1<t_1-t\}}\hat v(t+\tau^\eps_1,x_1)\right] \leq -\eps_1\,\e^{-\bar r\, T} \P (\tau^\eps_1<t_1-t).
\end{align*}
Hence, setting for simplicity $\bar \eps_1=\eps_1\e^{\bar r\,T}$, from \eqref{eq:cont0} we obtain
\begin{align}\label{eq:cont1}
&-\bar \eps_1\, \P (\tau^\eps_1<t_1-t)\\
& \ge - \eps +\E \left[\int_0^{\tau^\eps_1\wedge(t_1-t)}\e^{-\int_0^{s}r(\XX^\eps_u)\ud u}\,\partial_{tx}v(t+s,x_0+\delta_{\varepsilon})\, \ud R_{s}^{\delta_{\varepsilon}}\right].\notag
\end{align}

The next step is to let $\eps\to 0$. In order to take care of possible issues with the regularity of $\partial_{tx}v$ as $\delta_\eps\downarrow 0$ we adopt an approach using test functions. Pick a non-negative function $\varphi \in C^{\infty}_{c}(t_0,t_1)$ such that
$\int_{t_0}^{t_1}\varphi(t) \ud t = 1$. Then, multiplying both sides of \eqref{eq:cont1} by $\varphi$, integrating over $(t_0,t_1)$ and using Fubini's theorem we obtain
\begin{align}\label{eq:cont-01}
-\bar \eps_1 & \int_{t_0}^{t_1} \P(\tau^\eps_1<t_1-t)\varphi(t) \ud t\\
& \ge - \eps +\E\left[\int_0^{\tau^\eps_1}\e^{-\int_0^{s}r(\XX^\eps_u)\ud u}\left(\int_{t_0}^{t_1} \ind_{\{t<t_1-s\}}\partial_{tx}v(t+s,x_0+\delta_{\varepsilon})\varphi(t) \ud t \right) \ud R_{s}^{\delta_{\varepsilon}}\right],\notag
\end{align}
where we are also using that $\tau_1^\eps$ is independent of $t$. Let us now look more closely at the integral on the right-hand side above: integration by parts and the second estimate in \eqref{eq:bdd} give
\begin{align*}
&\int_{t_0}^{t_1}\! \ind_{\{t<t_1-s\}}\partial_{tx}v(t\!+\!s,x_0\!+\!\delta_{\varepsilon})\varphi(t) \ud t\\
&=\partial_x v(t_1,x_0\!+\!\delta_{\varepsilon})\varphi(t_1-s)-\!\int_{t_0}^{t_1}\! \ind_{\{t<t_1-s\}}\partial_x v(t\!+\!s,x_0\!+\!\delta_{\varepsilon})\varphi'(t) \ud t\\
&\ge \big(\partial_x g(x_0\!+\!\delta_{\varepsilon})\!-\!\eps\big)\varphi(t_1-s)\!-\!\int_{t_0}^{t_1}\! \ind_{\{t<t_1-s\}}\partial_x g (x_0\!+\!\delta_{\varepsilon})\varphi'(t) \ud t\!-\!\eps\!\int_{t_0}^{t_1}\! \ind_{\{t<t_1-s\}}|\varphi'(t)| \ud t\\
&=-\eps\varphi(t_1-s)-\!\eps\!\int_{t_0}^{t_1}\! \ind_{\{t<t_1-s\}}|\varphi'(t)| \ud t,
\end{align*}
where the final equality follows by integrating $\varphi'$ over $(t_0,t_1)$. Using the expression above in \eqref{eq:cont-01} along with $r(\,\cdot\,)\ge 0$ we obtain
\begin{align*}
-\bar \eps_1 & \int_{t_0}^{t_1} \P(\tau^\eps_1<t_1-t)\varphi(t) \ud t\\
& \ge - \eps -\eps\, \E\bigg[\int_0^{\tau^\eps_1}\varphi(t_1-s) \ud R_{s}^{\delta_{\varepsilon}}+R^{\delta_\eps}_{\tau^\eps_1\wedge t_1}\int_{t_0}^{t_1}|\varphi'(t)|\ud t\bigg] \\
&\ge -\eps\left(1+\left(\|\varphi \|_{\infty}+T\|\varphi' \|_{\infty}\right)\E\big[R_{\tau^\eps_1\wedge t_1}^{\delta_{\varepsilon}}\big]\right),
\end{align*}
where the final inequality uses that $\varphi(t_1-s)=0$ for $s\ge t_1$ and $\|\cdot\|_\infty$ is the supremum norm on $[0,T]$.

From the integral form of the dynamics of $\XX^\eps$ we obtain
\begin{align*}
\E\left[R^{\delta_{\varepsilon}}_{\tau^\eps_1\wedge t_1}\right]=\E\left[\XX_{\tau^\eps_1\wedge t_1}^{\eps}
 -x_0-\delta_{\eps}\right]\le x_1-x_0
\end{align*}
and 
\begin{align*}
-\bar \eps_1 & \int_{t_0}^{t_1} \P(\tau^\eps_1<t_1-t)\varphi(t) \ud t \ge -\eps\Big(1+\left(\|\varphi \|_{\infty}+T\|\varphi' \|_{\infty}\right)(x_1-x_0)\Big).
\end{align*}
Then, taking limits as $\eps\to 0$ gives
\begin{align}\label{eq:posit}
\limsup_{\eps\to 0} \int_{t_0}^{t_1} \P(\tau^\eps_1<t_1-t)\varphi(t) \ud t \le 0.
\end{align}
Showing that the left hand side above is positive will give us a contradiction. Hence there cannot be a discontinuity of $c$ at $x_0$.

Setting $\cJ=(a,x_1)$ and adopting the same time-change as in step 2 of the proof of Proposition \ref{prop:reg-b} (see \eqref{eq:mJ} and \eqref{eq:XZ}) we obtain, using the same notation,
\[
\XX^\eps_{A_s\wedge\tau_\cJ}=x_0+\delta_\eps+W^{\delta_\eps}_{s\wedge m_\cJ}+S^{\delta_\eps}_{s\wedge m_\cJ}=:Z^\eps_{s\wedge m_\cJ}
\]
with $S^{\delta_\eps}_{s\wedge m_\cJ}=R^{\delta_\eps}_{A_s\wedge\tau_\cJ}$ and $m_\cJ=m^\eps_\cJ$ the first time the process $Z^\eps$ leaves the interval $(a,x_1)$ (let us also recall that the Brownian motion $W^{\delta_\eps}$ depends on the initial point $x_0+\delta_\eps$). By construction and recalling \eqref{eq:SK} we have that the process $Z^\eps$ solves (uniquely) the classical Skorokhod reflection problem
\begin{align}\label{eq:SK2}
&Z_{t\wedge m_\cJ}^{\varepsilon} \ge x_0+\delta_{\varepsilon},\quad\text{for all $t\ge 0$ and}\quad\ud S^{\delta_\eps}_t= \ind_{\{Z^\eps_t=x_0+\delta\}}\ud S^{\delta_\eps}_t\,.
\end{align}
Therefore we have an explicit formula for the increasing process $S^{\delta_\eps}$ (see, \cite[Lemma 6.14, Chapter 3]{KS}):  
\begin{align*}\label{eq:S}
S^{\delta_\eps}_t=\sup_{0\le s\le t}\big(- W^{\delta_\eps}_s\big).
\end{align*}
It may be worth noticing that reversing this construction gives another proof of the existence and uniqueness of the solution of the original reflected SDE for $\XX^\eps$. 

From \eqref{eq:mJ} we have
\[
\tau^\eps_1<t_1-t\iff m^\eps_\cJ<\int_0^{t_1-t}\sigma^2(\XX^\eps_s)\ud s\Longleftarrow m_\cJ^\eps<\underline{\sigma}^2(t_1-t),
\]
where $\underline \sigma:=\min_{x\in\R}\sigma(x)$ (recall that we extended $\sigma$ to $\R$ so that it is also strictly separated from zero). Hence
\begin{align}\label{prob}
\P(\tau^\eps_1<t_1-t)\ge \P\big(m^\eps_\cJ<\underline{\sigma}^2(t_1-t)\big).
\end{align}
As in the proof of Proposition \ref{prop:reg-b} (see \eqref{eq:law}) we need to pass to auxiliary processes 
\[
\widetilde{Z}^\eps_t=x_0+\delta_\eps+B_t+\widetilde{S}_t\quad \text{and}\quad \widetilde{S}_t=\sup_{0\le s\le t}(-B_s)
\]
in order to remove the dependence of the Brownian motion on the initial point. Then, setting $\widetilde{m}^\eps_\cJ=\inf\{s\ge 0:\widetilde{Z}^\eps_s\notin(a, x_1)\}$
and recalling $m^\eps_\cJ=\inf\{s\ge 0:Z^\eps_s\notin(a, x_1)\}$, we have 
\[
\P\big(m^\eps_\cJ<\underline{\sigma}^2(t_1-t)\big)=\P\big(\widetilde{m}^\eps_\cJ<\underline{\sigma}^2(t_1-t)\big),
\]
since $\mathsf{Law}_{\P}(Z^\eps,m^\eps_\cJ)=\mathsf{Law}_{\P}(\widetilde{Z}^\eps,\widetilde{m}^\eps_\cJ)$.

It is immediate to check that
\[
\{\widetilde{m}^\eps_\cJ< \underline{\sigma}^2(t_1-t)\} \downarrow  \{\widetilde{m}^0_\cJ<\underline{\sigma}^2(t_1-t)\}\quad\text{as $\eps\to 0$},
\]
with $\widetilde{m}^0_\cJ=\inf\{s\ge 0:\widetilde{Z}^0_s\notin(a, x_1)\}$ and where $\widetilde{Z}^0_s=x_0+B_s+S_s$ is a Brownian motion reflecting at $x_0$.
Hence, from \eqref{prob} and the above construction we have
\begin{align*}
\liminf_{\eps \downarrow 0} \P(\tau^\eps_1<t_1-t)\ge \P\big(\widetilde{m}^0_\cJ<\underline{\sigma}^2(t_1-t)\big)\ge \P\big(|B_{\underline{\sigma}^2(t_1-t)}|> (x_1-x_0)\big)>0,
\end{align*}
where in the second inequality we used that
\[
\{\widetilde{m}^0_\cJ<\underline{\sigma}^2(t_1-t)\}\supset \{\widetilde{Z}^0_{\underline{\sigma}^2(t_1-t)}> x_1\}
\]
and that, for each $s\in[0,T]$, the law of $\widetilde{Z}^0_s$ is the same as the law of $x_0+|B_s|$ (see, e.g., \cite[Thm.\ 6.17, Sec.\ 3.6.C]{KS}).

Finally, using Fatou's lemma in \eqref{eq:posit}, and the discussion above, we conclude
\begin{align*}
0\ge \liminf_{\eps \downarrow 0} \int_{t_0}^{t_1}\P(\tau^\eps_1<t_1-t)\varphi(t) \ud t \ge \int_{t_0}^{t_1} \P\big(|B_{\underline{\sigma}^2(t_1-t)}|> (x_1-x_0)\big)\varphi(t) \ud t>0,
\end{align*}
where the final inequality uses that $\varphi\ge 0$ and arbitrary. Hence a contradiction and continuity of $x\mapsto c(x)$ is proved.
\end{proof}

Thanks to Corollary \ref{cor:monot} we know that the boundary $c$ admits a continuous inverse on $[a,a_*)$ and on $(b_*,b]$. In particular, denoting $\bar c=c(a_*)=c(b_*)$ and setting $c_1(x)=c(x)$ for $x\in[a,a_*)$ and $c_2(x)=c(x)$ for $x\in(b_*,b]$ we can define 
\begin{align}\label{eq:b1b2}
b_1(t):=c^{-1}_1(t)\quad\text{and}\quad b_2(t):=c^{-1}_2(t)\quad\text{for $t\in[\bar c, T)$}.
\end{align}
Notice that if $c(a)<T$ (respectively $c(b)<T$) we understand $b_1$ (respectively $b_2$) to be constant and equal to $a$ for $t\in[c(a),T]$ (respectively equal to $b$ for $t\in[c(b),T]$).
Combining Theorem \ref{thm:non-flat} and Corollary \ref{cor:monot} we obtain the next result.  
\begin{corollary}\label{cor:non-flat}
Let Assumptions \ref{ass:cont-v} and \ref{ass:const} hold. Let $(a,b)\subset\Lambda^0_-$ and $\sigma, r, g\in C^{1,\alpha}([a,b])$ for some $\alpha\in(0,1)$. Recall the boundaries $b_1$ and $b_2$ defined in \eqref{eq:b1b2}. Then
\begin{align*}
&\text{the map $t\mapsto b_1(t)$ is {\em strictly decreasing} on $[\bar c, c(a))$,}\\
&\text{the map $t\mapsto b_2(t)$ is {\em strictly increasing} on $[\bar c, c(b))$.}
\end{align*}  
\end{corollary}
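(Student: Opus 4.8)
The plan is to deduce the corollary as a soft consequence of the two preceding results. Corollary~\ref{cor:monot} already supplies the strict monotonicity of $c$ on the two relevant pieces --- strictly decreasing on $[a,a_*)$ and strictly increasing on $(b_*,b]$ --- and Theorem~\ref{thm:non-flat} supplies continuity of $c$ on $(a,b)$. What then remains is only the elementary fact that a continuous, strictly monotone real function on an interval admits an inverse of the same monotonicity type, defined on the whole image interval. I would run this observation separately on $c_1:=c|_{[a,a_*)}$ and on $c_2:=c|_{(b_*,b]}$.

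First I would treat $b_1$. By Corollary~\ref{cor:monot}, $c_1$ is strictly decreasing; by Theorem~\ref{thm:non-flat} it is continuous on $(a,a_*)$, and I would recover right-continuity at $a$ by combining lower semi-continuity (Proposition~\ref{prop:boundary1}), which gives $\liminf_{x\to a^+}c(x)\ge c(a)$, with strict monotonicity, which forces $c(x)\le c(a)$ for $x$ near $a$. Since $c$ is continuous at $a_*$, one has $\lim_{x\uparrow a_*}c(x)=c(a_*)=\bar c$, so by the intermediate value theorem the image of $c_1$ is the interval $(\bar c,c(a)]$ (or $(\bar c,T)$ when $c(a)=T$). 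Hence $c_1$ has an inverse $b_1=c_1^{-1}$ on that interval, and $b_1$ is strictly decreasing: for $t_1<t_2$ in the image, writing $x_i=b_1(t_i)$ gives $c_1(x_1)<c_1(x_2)$, which forces $x_1>x_2$ by strict monotonicity of $c_1$. Extending by $b_1(\bar c):=a_*$ preserves strict decrease, so $b_1$ is strictly decreasing on $[\bar c,c(a))$; on $[c(a),T)$ the convention in \eqref{eq:b1b2} sets $b_1\equiv a$, which is precisely why the claim stops at $c(a)$. The argument for $b_2$ is the mirror image: $c_2$ is strictly increasing (Corollary~\ref{cor:monot}) and continuous (Theorem~\ref{thm:non-flat}, together with Proposition~\ref{prop:boundary1} at $b$ as above), its image is $(\bar c,c(b)]$ (resp.\ $(\bar c,T)$ if $c(b)=T$), and the inverse $b_2=c_2^{-1}$, extended by $b_2(\bar c):=b_*$, is strictly increasing on $[\bar c,c(b))$.

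I do not expect a genuine obstacle here; the only delicate points are the endpoints $a$ and $b$ of $(a,b)$, where Theorem~\ref{thm:non-flat} does not apply directly and one-sided continuity has to be extracted from lower semi-continuity plus monotonicity as above (and the degenerate cases $a_*=a$ or $b_*=b$, in which the corresponding piece of $c$ is empty and the claimed strict monotonicity is vacuous). It is worth stressing that the continuity afforded by Theorem~\ref{thm:non-flat} is genuinely indispensable: a merely lower semi-continuous strictly decreasing $c$ could still drop by jumps, leaving gaps in the range of $c_1$ and hence an incompletely defined inverse $b_1$, so it is exactly Theorem~\ref{thm:non-flat} that rules out this pathology and makes the inversion legitimate on the whole interval $[\bar c,c(a))$.
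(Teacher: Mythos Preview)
Your proposal is correct and matches the paper's approach exactly: the paper states the corollary as an immediate consequence of combining Theorem~\ref{thm:non-flat} (continuity of $c$) with Corollary~\ref{cor:monot} (strict monotonicity of $c$ on $[a,a_*)$ and $(b_*,b]$), without giving any further details. Your write-up simply spells out this combination, including the elementary fact that a continuous strictly monotone function has a strictly monotone inverse on its full image interval, and your careful handling of the endpoints $a$, $b$, $a_*$, $b_*$ is accurate.
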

This result immediately applies to the setting of Example \ref{ex:bays}. Moreover, as a by-product we obtain the first known probabilistic proof of the strict monotonicity of the American put boundary.
\begin{example}[American put boundary]\label{ex:non-flat-b}
{\rm
Let us consider the classical Black and Scholes set-up where
\[
\ud Y_t=rY_t\,\ud t+\sigma Y_t\, \ud B_t,\qquad Y_0=y,
\] 
is the stock's dynamics with $r,\sigma>0$. Let $K>0$ be the strike price and $(x)^+:=\max\{0,x\}$, then the value of the American put option is
\[
\tilde v(t,y)=\sup_{0\le \tau\le T-t}\E_y\left[\e^{-r\tau}(K-Y_\tau)^+\right].
\]
Although this problem is perhaps the best studied optimal stopping problem in the literature, it is convenient to rewrite some of the main results in the notation of our work so far. The scale function of the process (up to affine transformations) reads $S(y)= (1-D)^{-1}y^{1-D}$ with $D=2r/\sigma^2$. Recalling the argument from Remark \ref{rem:process} we set $X_t=S(Y_t)$ and find the dynamics
\[
\ud X_t=(1-D)\sigma X_t\,\ud B_t,\qquad X_0=x=(1-D)^{-1}y^{1-D}.
\]
It is worth noticing that if $D>1$ the process $X$ is strictly negative, while if $D<1$ then the process $X$ is positive. For simplicity but with no loss of generality let us consider $D<1$. 

Setting $v(t,x)=(1-D)^{-1/(1-D)}\tilde v(t,S^{-1}(x))$ and $K'=K(1-D)^{-1/(1-D)}$, the optimal stopping problem becomes
\[
v(t,x)=\sup_{0\le \tau\le T-t}\E_x\left[\e^{-r\tau}\Big(K'-(X_\tau)^{\frac{1}{1-D}}\Big)^+\right].
\]
We now set $g(x)=(K'-(x)^{\frac{1}{1-D}})^+$ and notice that $g''(\ud x)$ has a positive atom at $\bar K=(K')^{1-D}$ with $g''(\{\bar K\})=(1-D)^{-1}\bar K^{D/(1-D)}$. Then, using \eqref{eq:u} (see also Remark \ref{rem:gain}) we obtain
\[
u(t,x)=\sup_{0\le \tau\le T-t}\E_x\left[\tfrac{1}{2}\int_{\R_+}\ell^z_\tau \mu(\ud z)\right],
\]
where 
\[
\mu(\ud z)=g''(\{\bar K\})\delta_{\bar K}(\ud z)-\left(\tfrac{D}{(1-D)^{2}}z^{-1/(1-D)}+Dz^{-2}g(z)\right)\ind_{\{z<\bar K\}}\ud z.
\]
Here we have $\Lambda_-=\R_+\setminus\{\bar K\}$, $\Lambda^0_-=(0,\bar K)$ and $\Lambda_+=\{\bar K\}=\Lambda^0_+$, which is a similar situation to the one in Example \ref{ex:bays}. Intuitively, waiting is costly for the option holder at all times $t\in[0,T]$ for which $X_t< \bar K$, whereas waiting is rewarding at times $t\in[0,T]$ when $X_t=\bar K$ (the option holder receives a reward at the `rate' of $\tfrac{1}{2}g''(\{\bar K\})\ud \ell^{\bar K}_t$). Differently to Example \ref{ex:bays}, here $(\bar K,\infty)=\Lambda_-\setminus\Lambda^0_-$ so that the option holder incurs no costs and no benefits when waiting if $X_t\in (\bar K,\infty)$. 

From (i) in Proposition \ref{prop:stop2} we obtain that $c(x)<T$ for all $x\in(0,\bar K)$, whereas Proposition \ref{prop:bay} gives $c(\bar K)=T$. In addition one can easily prove $v(t,x)>0$ for $t<T$ and all $x\in\R_+$. Then $c(x)=T$ for $x\in(\bar K,\infty)$ as well. By analogous arguments to those in the third paragraph of Example \ref{ex:bays} for any $x\in(0,\bar K)$ we have $[c(x),T]\times [0,x]\in\cS$. Finally, Corollary \ref{cor:monot} implies that  $c$ is strictly increasing on $(0,\bar K)$, hence it can be inverted defining a continuous, non-decreasing boundary $t\mapsto b(t)$. In the original coordinates $(t,y)$ the optimal exercise boundary reads $\hat b(t)=S^{-1}(b(t))=(1-D)^{1/(1-D)}(b(t))^{1/(1-D)}$. The latter is the familiar parametrisation of the American put exercise boundary (see, e.g., \cite[Ch.\ VII, Sec.\ 25.2]{PeSh}). Now, applying Corollary \ref{cor:non-flat} with $\Lambda^0_-=(0,\bar K)$ and $\Sigma_{[0,\bar K]}=[0,b(0)]$ we conclude that $t\mapsto b(t)$ must be strictly increasing. Hence $t\mapsto \hat b(t)$ is strictly increasing too.}
\end{example}

\section*{Appendix}
\subsection*{Proof of \eqref{eq:u}}
The process $X$ is a continuous local martingale, so It\^o-Tanaka-Meyer formula (\cite[Thm.\ VI.1.5]{RY}) gives:
\begin{align}\label{eq:ITg0}
g(X_t)=g(x)+\int_0^t g'(X_s)\sigma(X_s)\ud B_s+\tfrac{1}{2}\int_\R L^z_t g''(\ud z).
\end{align}
Here the fact that $X$ is bound to evolve in $\cI=(\underline x, \overline x)$ implies that 
\begin{align}\label{eq:ITg00}
\int_\R L^z_t g''(\ud z)=\int_{\cI} L^z_t g''(\ud z),
\end{align}
since $L^z_t= 0$ for $z\notin \cI$. It is also worth recalling that $(t,z)\mapsto L^z_t$ can be chosen $\P$-a.s.\ continuous by \cite[Thm.\ VI.1.7 and Corollary VI.1.8]{RY}.
Then $g(X_t)$ is a continuous semi-martingale and by It\^o's product rule and \eqref{eq:ITg0} we have
\begin{align}\label{eq:ITg1}
&\e^{-\int_0^t r(X_s)\ud s}g(X_t)\notag\\
&=g(x)-\int_0^t \e^{-\int_0^s r(X_u)\ud u}r(X_s)g(X_s)\ud s+\int_0^t \e^{-\int_0^s r(X_u)\ud u}\ud g(X_s)\\
&= g(x)-\int_0^t \e^{-\int_0^s r(X_u)\ud u}r(X_s)g(X_s)\ud s+\int_0^t\e^{-\int_0^s r(X_u)\ud u}g'(X_s)\sigma(X_s)\ud B_s\notag\\
&\quad+\tfrac{1}{2}\int_\cI \int_0^t \e^{-\int_0^s r(X_u)\ud u}\ud L^z_s g''(\ud z),\notag
\end{align}
where we used Fubini's theorem to swap the order of integrals in the final term (this can be justified more formally using the same arguments as in \eqref{eq:ITg2} below, so we avoid repetitions here).

For the integral with respect to $\ud s$ we use the occupation times formula. Let us rewrite $g$ in terms of its positive and negative part, i.e., $g=[g]^+-[g]^-$. Since 
\[
z\mapsto \Phi^\pm(z):=\frac{[g(z)]^\pm r(z)}{\sigma^{2}(z)}
\] 
is a positive Borel-measurable function, then \cite[Corollary VI.1.6]{RY} holds and
\begin{align*}
Q_t^{\pm}:=\int_0^t r(X_s)\big[g(X_s)\big]^\pm\ud s=\int_0^t \Phi^\pm(X_s)\ud \langle X\rangle_s=\int_\cI \Phi^\pm(z)L^z_t\ud z,
\end{align*}
for all $t\in[0,T]$, $\P$-a.s. Notice that even though $\sigma(z)$ may vanish when $z$ approaches the endpoints of $\cI$, the final integral above is well-defined because the initial expression for $Q^\pm_t$ is always finite. For a.e.\ $\omega$ the mapping 
\[
t\mapsto\int_0^t r\big(X_s(\omega)\big)g\big(X_s(\omega)\big)\ud s=(Q^{+}_t(\omega)-Q^-_t(\omega))=Q_t(\omega)
\] 
defines a (finite) signed measure on $[0,T]$. Moreover, for simple functions $f:[0,T]\to\R_+$ (possibly depending on $\omega$ as well) it is easy to check that
\[
\int_0^T f(t)\ud Q_t(\omega)=\int_\cI \Big(\int_0^T f(t)\ud L^z_t(\omega)\Big)\sigma^{-2}(z)r(z)g(z)\ud z.
\]
Thus, by dominated convergence the equality extends to any bounded measurable function.
In particular, choosing $f(s)=\e^{-\int_0^s r(X_u)\ud u}\mathbf{1}_{[0,t]}(s)$ we deduce
\begin{align}\label{eq:ITg2}
&\int_0^t \e^{-\int_0^s r(X_u)\ud u}r(X_s)g(X_s)\ud s\\
&=\int_0^T f(s) \ud Q_s
=\int_\cI \Big(\int_0^t \e^{-\int_0^s r(X_u)\ud u} \ud L^z_s \Big)\sigma^{-2}(z)r(z)g(z)\ud z.\notag
\end{align}
Almost sure finiteness of the last integral is guaranteed by finiteness of the initial expression in the equation. See also \cite[Theorem 45.4, Chapter IV]{RW} and \cite[Exercise 1.15, Chapter VI.1]{RY}.

Combining \eqref{eq:ITg1} and \eqref{eq:ITg2}, for any stopping time $\tau\in[0,T]$ we obtain 
\begin{align}\label{eq:ITg3}
\e^{-\int_0^\tau r(X_s)\ud s}g(X_\tau)= g(x)+\tfrac{1}{2}\int_\cI \ell^z_\tau \mu(\ud z)+\int_0^\tau\e^{-\int_0^s r(X_u)\ud u}g'(X_s)\sigma(X_s)\ud B_s,
\end{align}
with $\ell^z_s$ and $\mu(\ud z)$ as in \eqref{eq:ell}. 
Let $(\tau_n)_{n\ge 1}$ be a localising sequence for the local martingale in \eqref{eq:ITg3}. Taking expectations we have
\[
\E\Big[\e^{-\int_0^{\tau\wedge\tau_n} r(X_s)\ud s}g(X_{\tau\wedge\tau_n})\Big]= g(x)+\E\Big[\tfrac{1}{2}\int_\cI \ell^z_{\tau\wedge\tau_n} \mu(\ud z)\Big].
\] 
Letting $n\to\infty$ we have $\tau\wedge\tau_n\uparrow \tau$ and 
\[
\lim_{n\to\infty}\E\Big[\e^{-\int_0^{\tau\wedge\tau_n} r(X_s)\ud s}g(X_{\tau\wedge\tau_n})\Big]=\E\Big[\e^{-\int_0^{\tau} r(X_s)\ud s}g(X_{\tau})\Big],
\]
by dominated convergence and \eqref{eq:integr-g}. By monotone convergence we also obtain
\[
\lim_{n\to\infty}\E\Big[\tfrac{1}{2}\int_\cI \ell^z_{\tau\wedge\tau_n} \mu^\pm(\ud z)\Big]=\E\Big[\tfrac{1}{2}\int_\cI \ell^z_{\tau} \mu^\pm(\ud z)\Big].
\]
Since Assumption \ref{ass:ell} holds, the above implies
\[
\lim_{n\to\infty}\E\Big[\tfrac{1}{2}\int_\cI \ell^z_{\tau\wedge\tau_n} \mu(\ud z)\Big]=\E\Big[\tfrac{1}{2}\int_\cI \ell^z_{\tau} \mu(\ud z)\Big].
\]
Then \eqref{eq:u} follows by combining the limits above.

\subsection*{Proof of Lemma \ref{lem:contBM}}
For simplicity and with no loss of generality we adopt the standard convention that $t\mapsto W_t(\omega)$ is continuous for all $\omega\in\Omega$. In what follows we denote 
\[
\tau^x_{[a,b]}=\inf\{t\ge 0: x+W_t\notin[a,b]\}
\] 
the exit time from the closed interval $[a,b]$. 
Fix $x_0\in[a,b]$ and let $(x_n)_{n\ge 1}\subset(a,b)$ be a sequence converging to $x_0$.
\vspace{+3pt}

{\bf Step 1}. 
Here we show that 
\begin{equation}\label{eq:lsc}
\liminf_{n\to\infty}\tau^{x_n}_{(a,b)}(\omega)\ge \tau^{x_0}_{(a,b)}(\omega),\quad\text{for all $\omega\in\Omega$.}
\end{equation}
If $x_0\in\{a,b\}$, then $\tau^{x_0}_{(a,b)}(\omega)=0$ for all $\omega\in\Omega$ simply by definition. Hence lower semi-continuity is obvious. If $x_0\in(a,b)$, then $\tau^{x_0}_{(a,b)}(\omega)>0$ for all $\omega\in\Omega$ by continuity of Brownian paths. Fix $\omega\in\Omega$ and take an arbitrary $\delta>0$ such that $\tau^{x_0}_{(a,b)}(\omega)>\delta$. Then there exists $\eps=\eps_{\delta,\omega}>0$ such that $x_0+W_t(\omega)\in[a+\eps,b-\eps]$ for all $t\in[0,\delta]$ and consequently $x_n+W_t(\omega)\in[a+\eps/2,b-\eps/2]$ for all $t\in[0,\delta]$ and for all $n$'s such that $|x_n-x_0|<\eps/2$. Therefore, $\liminf_{n\to\infty}\tau_{(a,b)}^{x_n}(\omega)\ge \delta$ and, since $\delta>0$ was arbitrary, we have 
\[
\liminf_{n\to\infty}\tau_{(a,b)}^{x_n}(\omega)\ge \tau_{(a,b)}^{x_0}(\omega).
\]  
Recalling that $\omega\in\Omega$ was also arbitrary, lower semi-continuity holds as in \eqref{eq:lsc}.
\vspace{+3pt}

{\bf Step 2}. 
Here we show that 
\begin{equation}\label{eq:usc}
\limsup_{n\to\infty}\tau_{[a,b]}^{x_n}(\omega)\le \tau_{[a,b]}^{x_0}(\omega),\quad\text{for all $\omega\in \Omega$}.
\end{equation}
Fix $\omega\in\Omega$ and take an arbitrary $\delta>0$ such that $\tau^{x_0}_{[a,b]}(\omega)<\delta$. Then there exists $t\in(\tau^{x_0}_{[a,b]}(\omega),\delta)$ such that $x_0+W_t(\omega)\notin [a,b]$. Since $\R\setminus[a,b]$ is an open set, we can find $n_0\ge 1$ sufficiently large that $x_n+W_t(\omega)\notin [a,b]$ for all $n\ge n_0$. Hence 
\[
\limsup_{n\to\infty}\tau^{x_n}_{[a,b]}(\omega)\le t<\delta.
\]  
Since $\delta>0$ was arbitrary
\[
\limsup_{n\to\infty}\tau_{[a,b]}^{x_n}(\omega)\le \tau_{[a,b]}^{x_0}(\omega),
\]
and recalling that $\omega\in\Omega$ was also arbitrary, upper semi-continuity holds as in \eqref{eq:usc}.
\vspace{+3pt}

{\bf Step 3}. Here we conclude the proof by showing that 
\begin{equation}\label{eq:ctau}
\tau^{x_0}_{(a,b)}=\lim_{n\to\infty}\tau^{x_n}_{(a,b)}=\lim_{n\to\infty}\tau^{x_n}_{[a,b]}=\tau^{x_0}_{[a,b]},\quad\P-\text{a.s.}
\end{equation}
For each $n\ge 0$ let $\Omega_{x_n}:=\{\omega\in\Omega:\tau^{x_n}_{(a,b)}(\omega)=\tau^{x_n}_{[a,b]}(\omega)\}$ (notice that we include $\Omega_{x_0}$). Then it follows from the strong Markov property for Brownian motion and the law of iterated logarithm that $\P(\Omega_{x_n})=1$ for each $n\ge 0$. Setting $\bar\Omega=\cap_{n\ge 0}\Omega_{x_n}$ we have $\P(\bar \Omega)=1$ and 
\begin{equation}\label{eq:obar}
\tau^{x_n}_{(a,b)}(\omega)=\tau^{x_n}_{[a,b]}(\omega)\quad\text{for all $n\ge 0$ and for all $\omega\in\bar \Omega$}.
\end{equation}
Thus, combining \eqref{eq:lsc} and \eqref{eq:usc} with \eqref{eq:obar} we have
\begin{align*}
\tau^{x_0}_{(a,b)}(\omega)\le& \liminf_{n\to\infty}\tau^{x_n}_{(a,b)}(\omega)=\liminf_{n\to\infty}\tau^{x_n}_{[a,b]}(\omega)
\le\limsup_{n\to\infty}\tau^{x_n}_{[a,b]}(\omega)\le\tau^{x_0}_{[a,b]}(\omega)=\tau^{x_0}_{(a,b)}(\omega)  
\end{align*}
for all $\omega\in\bar\Omega$. This proves \eqref{eq:ctau}.\hfill$\square$
\vspace{+5pt}

\subsection*{Proof of \eqref{eq:ellprob}}
First of all we notice that since $X_{t\wedge\tau_\eps}\in\overline{\cI_\eps}$ for all $t\ge 0$, then 
\[
\E_{x_0}[\ell^z_{\tau_\eps}]>0\iff \E_{x_0}[L^z_{\tau_\eps}]>0,
\]
because the discount factor is bounded from below by $\e^{-\bar r_\eps T}$ with $\bar r_\eps =\sup_{x\in\cI_\eps}r(x)$.

Now we can use the same time-change as the one adopted in step 2 of the proof of Proposition \ref{prop:reg-b} (see \eqref{eq:mJ}--\eqref{eq:XZ}) with $\tau_\cJ$ therein replaced by $\tau_\eps$. Thus we get
\[
X^{x_0}_{s\wedge \tau_\eps}=Z^{x_0}_{\langle M\rangle_{s\wedge\tau_\eps}}=x_0+W_{\langle M\rangle_{s\wedge\tau_\eps}},
\]
where $W=W^{x_0}$ depends on $x_0$ but we can drop this dependence from our notation as $x_0$ is fixed throughout the proof. Let us denote by $(\widetilde L^z_{t})_{t\ge 0}$ the local time of the process $Z^{x_0}$ at $z\in\cI_{\eps}$. From It\^o-Tanaka's formula we get
\begin{align*}
L^z_{\tau_\eps}=\,&\big|X^{x_0}_{\tau_\eps}-z\big|-|x_0-z|-\int_0^{\tau_\eps}\mathrm{sign}(X^{x_0}_s-z)\sigma(X^{x_0}_s)\ud B_s\\
=\,&\big|Z^{x_0}_{\langle M\rangle_{\tau_\eps}}-z\big|-|x_0-z|-\int_0^{\langle M\rangle_{\tau_\eps}}\mathrm{sign}(Z^{x_0}_s-z)\ud W_s=\widetilde L^z_{\langle M\rangle_{\tau_\eps}},
\end{align*}
where the second equality is by \cite[Prop.\ 3.4.8]{KS} and the final one is by It\^o-Tanaka's formula applied to $|Z^{x_0}-z|$. So our problem reduces to proving that 
\[
\E_{x_0}\big[\widetilde L^z_{\langle M\rangle_{\tau_\eps}}\big]>0,\quad\text{for all $z\in\cI_\eps$}.
\]

Recall that $\langle M\rangle_{\tau_\eps}=m_{\eps}\wedge \langle M\rangle_{T-t}$ with $m_\eps =\inf\{t\ge 0: Z_t\notin \cI_\eps\}$. Moreover, setting $\underline \sigma_\eps=\inf_{x\in\cI_\eps}\sigma(x)$, Assumption \ref{ass:sigma1} gives $m_\eps\wedge\langle M\rangle_{T-t}\ge m_\eps\wedge[\underline{\sigma}^2_\eps(T-t)]$. Then
\begin{align*}
\E_{x_0}\big[L^z_{\tau_\eps}\big]=\E_{x_0}\big[\widetilde L^z_{\langle M\rangle_{\tau_\eps}}\big]\ge\E_{x_0}\big[\widetilde L^z_{m_\eps\wedge[\underline \sigma^2_\eps(T-t)]}\big].
\end{align*}
Set $t_\eps=\underline \sigma^2_\eps(T-t)$ for simplicity. Let $(\varphi_n)_{n\in\N}$ be smooth approximations of $\varphi(x):=|x|$ such that $\varphi_n\to \varphi$ uniformly on $\R$ with $\varphi'_n(x)\to \ind_{\{x\ge 0\}}-\ind_{\{x<0\}}$ pointwise and $\varphi''_n(x)\to 2\delta_0(x)$ in the sense of distributions. Then, taking expectations in It\^o-Tanaka's formula and using dominated convergence yields
\begin{align*}
\E_{x_0}\big[\widetilde L^z_{m_\eps\wedge t_\eps}\big]=&\lim_{n\to\infty}\E_{x_0}\big[\varphi_n(Z_{m_\eps\wedge t_\eps}-z)-\varphi_n(x_0-z)\big]\\
=&\lim_{n\to\infty}\E_{x_0}\Big[\tfrac{1}{2}\int_0^{t_\eps}\ind_{\{s<m_\eps\}}\varphi''_n(Z_s-z)\ud s\Big]\\
=&\lim_{n\to\infty}\tfrac{1}{2}\int_0^{t_\eps}\int_{\cI_\eps} \varphi''_n(y-z)p_{\cI_\eps}(s,x_0,y)\ud y\,\ud s,
\end{align*}
where we used Fubini's theorem in the final line and $p_{\cI_\eps}(\cdot,\cdot,\cdot)$ is the transition density of the Brownian motion killed upon leaving the interval $\cI_\eps$ (cf.\ \cite[p.\ 180, Eq.\ 1.15.8]{BS}). Since $\partial_y p_{\cI_\eps}(s,x_0,\cdot)$ is continuous, then it is not difficult to check that
\[
\lim_{n\to\infty}\tfrac{1}{2}\int_0^{t_\eps}\int_{\cI_\eps} \varphi''_n(y-z)p_{\cI_\eps}(s,x_0,y)\ud y\ud s=\int_0^{t_\eps}p_{\cI_\eps}(s,x_0,z)\ud s>0,
\]
using integration by parts. That concludes the proof of \eqref{eq:ellprob}.
\hfill$\square$

\vspace{+5pt}

\subsection*{Proof of Lemma \ref{lem:ut}}
This proof repeats verbatim an argument from \cite[Lemma 4.7]{CDeAS20} but adapted to out notation and setting.
By contradiction we assume there is $(t_0,x_0)\in\cC$ with $x_0\in(a,b)$ such that $\partial_t v(t_0,x_0)=0$. Since $v(t_0,x_0)>g(x_0)$ and $v(T,x_0)=g(x_0)$, there must exist $t_1\in(t_0,T)$ such that $(t_1,x_0)\in \cC$ and $\partial_t v(t_1,x_0)<-\eps$, for some $\eps>0$. By continuity of $\partial_t v$ inside $\cC\cap ([0,T)\times(a,b))$ (recall Remark \ref{rem:loc-reg}), and the fact that the set is open, there exists $\delta>0$ such that $\partial_t v(t_1,x)<-\eps/2$ for $x\in(x_0-\delta,x_0+\delta)\subset(a,b)$.

Letting $\cO:=(t_0,t_1)\times(x_0-\delta,x_0+\delta)$, we have that $\partial_t v\in C^{1,2}(\cO)$ thanks to internal regularity results for solutions of partial differential equations applied to \eqref{eq:PDEv} (see, e.g., \cite[Thm.~10, Ch.~3, Sec.~5]{Friedman}). Moreover, differentiating \eqref{eq:PDEv} with respect to time and recalling that $t\mapsto v(t,x)$ is non-increasing, we obtain that $\hat v:= \partial_t v$ solves
\begin{align}
&\big(\partial_t\hat v+(\sigma^2/2)\partial_{xx} \hat v\big)(t,x) = r(x)\hat v(t,x),\qquad\text{for $(t,x)\in\cO$},\\
&\hat v(t,x_0\pm\delta)\le 0,\qquad\qquad\qquad\qquad\qquad\quad\:\:\,\text{for $t\in[t_0,t_1)$},\\
&\hat v(t_1,x)<-\eps/2,\qquad\qquad\qquad\qquad\qquad\quad\:\:\,\text{for $x\in(x_0-\delta,x_0+\delta)$}.
\end{align}
Setting 
\[
\tau_\cO:=\inf\{s\ge 0\,:\,(t_0+s,X^{x_0}_s)\notin\cO\},
\]
an application of Dynkin's formula gives the following contradiction:
\begin{align*}
0=&\,\hat v(t_0,x_0)\\
=&\E_{x_0}\left[\e^{-\int_0^{\tau_\cO}r(X_s)\ud s}\hat v (t_0+\tau_{\cO},X_{\tau_\cO})\right]\le -\frac{\eps}{2}\E_{x_0}\left[\e^{-\int_0^{\tau_\cO}r(X_s)\ud s}\ind_{\{\tau_\cO=t_1-t_0\}}\right]<0,
\end{align*}
where the strict inequality holds because the process $(t_0+s,X^{x_0}_s)$ exits $\cO$ by crossing the segment $\{t_1\}\times(x_0-\delta,x_0+\delta)$ with positive probability, i.e., $\P_{x_0}(\tau_\cO=t_1-t_0)>0$.

\hfill$\square$
\vspace{+5pt}

\subsection*{Proof of Lemma \ref{lem:sm-f}}
For future reference let us denote $\cC_{a,b}=\cC\cap \big([0,T)\times(a,b)\big)$. Recall that $v\in C^{1,2}(\cC_{a,b})$ by Corollary \ref{cor:PDE} and Remark \ref{rem:loc-reg}. Then $\partial_x v$ is continuous separately in $\cC_{a,b}$ and in the interior of the stopping set $\mathrm{int}(\cS)\cap \big([0,T)\times(a,b)\big)$. Then we only need to look at the regularity across the boundary $\partial\cC_{a,b}$. An important observation which will be used several times below is that 
\begin{align}\label{eq:vxx}
\text{$\partial_{xx}v$ is continuous on $\overline{\cC\cap([0,T-\delta)\times(a,b))}$},
\end{align} 
for any $\delta>0$, thanks to Corollary \ref{cor:C1-space}.

With no loss of generality we assume $\hat c>0$ as the argument for $\hat c=0$ is analogous. In this case Corollary \ref{cor:monot} implies $\Sigma_{[a,b]}= a_*$. If $a_*=a$ or $a_*=b$ then the boundary $x\mapsto c(x)$ is strictly monotonic on $(a,b)$. In the more general situation when $a_*\in(a,b)$ we need to consider separately the intervals $(a,a_*]$ and $[a_*,b)$ where the boundary is strictly decreasing and strictly increasing, respectively. Below we develop our arguments only for $x\in[a_*,b)$ as the remaining case follows along the same lines up to obvious changes. 

Take $a<a'<a_*<b'<b$ and for any $x\in(a',b')$ let 
\begin{align}\label{eq:tau0}
\tau^x_{0}=\inf\{s\ge 0:X^x_s\notin(a',b')\}.
\end{align} 
Take $\widetilde\sigma\in C^1_b(\R)$ as an extension of $\sigma$ outside the interval $(a,b)$. Letting $\widetilde X$ be the unique strong solution of 
\[
\ud\widetilde X_t=\widetilde \sigma(\widetilde X_t)\ud B_t,\qquad\widetilde X_0=x,
\] 
and $\widetilde \tau_{0}^{\,x}$ the exit time of $\widetilde X^x$ from $(a',b')$ we have $\P$-a.s. the equalities
\begin{align}\label{eq:Pas}
\tau^x_0=\widetilde\tau_0^{\,x}\quad\text{and}\quad X^x_{s\wedge\tau_0}= \widetilde X^x_{s\wedge\widetilde \tau_{0}}\quad\text{for all $s\ge 0$.}
\end{align} 
We will use such equivalence later on.

Fix $x\in(a_*,b')$ with $(t,x)\in \cC$ and $t> \hat c$. Take $\eps>0$ such that $x+\eps<b'$ and let $\rho_\eps=\tau^{x}_{0}\wedge\tau^{x+\eps}_{0}$. Taking $\tau_*^{t,x}$ optimal for $v(t,x)$ and $\tau_*^{t,x+\eps}$ optimal for $v(t,x+\eps)$ we notice that $\tau_*^{t,x}\wedge\rho_\eps\le \tau_*^{t,x+\eps}\wedge\rho_\eps$, $\P$-a.s.\ because the boundary $x\mapsto c(x)$ is increasing on $[a_*,b)$ and, since $t>\hat c$, the process $(t+s,X^x_s)$ cannot enter the rectangle $(\hat c,T]\times(a,a_*]$ before hitting the stopping set (recall $[\hat c,T]\times\{a_*\}\in\cS$). Then, letting $\tau_*=\tau_*^{t,x}$ for simplicity and using the martingale property of the value (see \eqref{eq:martv}) we have
\begin{align*}
&v(t,x)=\E\left[\e^{-\int_0^{\tau_*\wedge\rho_\eps}r(X^x_s)\ud s}v(t\!+\!\tau_*\wedge\rho_\eps,X^x_{\tau_*\wedge\rho_\eps})\right]\quad\text{and}\\
&v(t,x\!+\!\eps)= \E\left[\e^{-\int_0^{\tau_*\wedge\rho_\eps}r(X^{x+\eps}_s)\ud s}v(t\!+\!\tau_*\wedge\rho_\eps,X^{x+\eps}_{\tau_*\wedge\rho_\eps})\right].
\end{align*}
Subtracting the two expressions we obtain
\begin{align}\label{eq:C1-0}
&v(t,x\!+\!\eps)-v(t,x)\\
&=\E\left[\left(\e^{-\int_0^{\tau_*\wedge\rho_\eps}r(X^{x+\eps}_s)\ud s}-\e^{-\int_0^{\tau_*\wedge\rho_\eps}r(X^{x}_s)\ud s}\right)v(t\!+\!\tau_*\wedge\rho_\eps,X^{x+\eps}_{\tau_*\wedge\rho_\eps})\right]\notag\\ 
&\quad+\E\left[\e^{-\int_0^{\tau_*\wedge\rho_\eps}r(X^{x}_s)\ud s}\Big(v(t\!+\!\tau_*\wedge\rho_\eps,X^{x+\eps}_{\tau_*\wedge\rho_\eps})-v(t\!+\!\tau_*\wedge\rho_\eps,X^x_{\tau_*\wedge\rho_\eps})\Big)\right].\notag
\end{align}

First we obtain a lower bound. For the first term in \eqref{eq:C1-0} we recall that $v$ is bounded on compacts (see \eqref{eq:bound-v}), we set $\Delta^\eps X_t=X^{x+\eps}_t-X^x_t$ and use the mean value theorem and $r\ge 0$ to obtain
\begin{align}\label{eq:intb0}
&\E\left[\left(\e^{-\int_0^{\tau_*\wedge\rho_\eps}r(X^{x+\eps}_s)\ud s}-\e^{-\int_0^{\tau_*\wedge\rho_\eps}r(X^{x}_s)\ud s}\right)v(t\!+\!\tau_*\wedge\rho_\eps,X^{x+\eps}_{\tau_*\wedge\rho_\eps})\right]\\
&\ge -\bar v\,\E\left[\left|\e^{-\int_0^{\tau_*\wedge\rho_\eps}r(X^{x+\eps}_s)\ud s}-\e^{-\int_0^{\tau_*\wedge\rho_\eps}r(X^{x}_s)\ud s}\right|\right]\notag\\
&\ge -\bar v\,\E\left[\int_0^{\tau_*\wedge\rho_\eps}\!\!\int^{\Delta^\eps X_s}_0\left|r'(X^{x}_s+z)\right|\ud z\ud s\right]\notag\\
&\ge -\bar v\,\bar r\,\E\left[\int_0^{\tau_*\wedge\rho_\eps}\Delta^\eps X_s\ud s\right]\notag
\end{align}
where 
\[
\bar v=\sup_{(s,z)\in[0,T]\times[a,b]}|v(s,z)|\quad\text{and}\quad \bar r=\sup_{z\in[a,b]}|r'(z)|.
\]
For the second term in \eqref{eq:C1-0}, recalling that $v(t+\tau_*,X^x_{\tau_*})=g(X^x_{\tau_*})$ and $v(t+\tau_*,X^{x+\eps}_{\tau_*})\ge g(X^{x+\eps}_{\tau_*})$ by optimality of $\tau_*=\tau_*^{t,x}$, we obtain
\begin{align*}
&\E\left[\e^{-\int_0^{\tau_*\wedge\rho_\eps}r(X^{x}_s)\ud s}\Big(v(t\!+\!\tau_*\wedge\rho_\eps,X^{x+\eps}_{\tau_*\wedge\rho_\eps})-v(t\!+\!\tau_*\wedge\rho_\eps,X^x_{\tau_*\wedge\rho_\eps})\Big)\right]\\
&\ge \E\left[\ind_{\{\tau_*\le \rho_\eps\}}\e^{-\int_0^{\tau_*\wedge\rho_\eps}r(X^{x}_s)\ud s}\Big(g(X^{x+\eps}_{\tau_*})-g(X^x_{\tau_*})\Big)\right]\notag\\
&\quad+\E\left[\ind_{\{\tau_*> \rho_\eps\}}\e^{-\int_0^{\tau_*\wedge\rho_\eps}r(X^{x}_s)\ud s}\Big(v(t\!+\!\rho_\eps,X^{x+\eps}_{\rho_\eps})-v(t\!+\!\rho_\eps,X^x_{\rho_\eps})\Big)\right].\notag
\end{align*}
Since $\tau_*\le T-t$ we have $\{\tau_*>\rho_\eps\}\subset\{\rho_\eps<T-t\}$. 
Since $x\mapsto c(x)$ is strictly increasing on $(a_*,b)$ with $c(x)<T$ (Proposition \ref{prop:stop2}), on the event $\{\tau_*>\rho_\eps\}$ it  holds
\[
b'=X^{x+\eps}_{\rho_\eps}\ge X^x_{\rho_\eps}> c^{-1}(t+\rho_\eps) 
\]
with $c^{-1}$ the continuous inverse of $c$ on $(a_*,b)$. Then, for any $b''\in(b',b)$, on the event $\{\tau_*>\rho_\eps\}$ the segment $\{t+\rho_\eps\}\times[X^x_{\rho_\eps},b'']$ lies in $\cC$ and we can use the fundamental theorem of calculus (twice) to obtain
\begin{align*}
&v(t\!+\!\rho_\eps,X^{x+\eps}_{\rho_\eps})-v(t\!+\!\rho_\eps,X^x_{\rho_\eps})\\
&=\int_0^{\Delta^\eps X_{\rho_\eps}}\partial_x v(t+\rho_\eps, X^x_{\rho_\eps}+z)\ud z\\
&=\int_0^{\Delta^\eps X_{\rho_\eps}}\Big(\partial_x v(t+\rho_\eps,b'')-\int_{X^x_{\rho_\eps}+z}^{b''}\partial_{xx}v(t+\rho_\eps,\zeta)\ud\zeta\Big)\ud z.
\end{align*}
Due to the strict monotonicity of the boundary $c$ and the fact that $c(x)<T$ for $x\in(a,b)$, there exists $\delta>0$ such that $c(b')< c(b'')\le c(b)-\delta<T-\delta$. Moreover, by definition of $\rho_\eps$, on the event $\{\tau_*>\rho_\eps\}$ we also have $t+\rho_\eps\le c(b')$. Then, recalling \eqref{eq:vxx}, on the event $\{\tau_*>\rho_\eps\}$ we have 
\[
\sup_{\nu\in[X^x_{\rho_\eps}, b'']}|\partial_{xx}v(t+\rho_\eps,\nu)|\le \kappa
\]
for some $\kappa>0$, independent of $\eps$. Hence,
\begin{align}\label{eq:boundC}
&\Big|\partial_x v(t+\rho_\eps,b'')-\int_{X^x_{\rho_\eps}+z}^{b''}\partial_{xx}v(t+\rho_\eps,\nu)\ud \nu\Big|\\
&\le \sup_{s\in[0,c(b')]}\big|\partial_x v(s,b'')\big|+\kappa(b-a)=:C\notag
\end{align}
for some deterministic constant $C>0$ independent of $\eps$, 
where we use that $\partial_x v(\,\cdot,b'')$ is bounded on $[0,c(b')]$ since $v\in C^{1,2}(\cC_{a,b})$ and $[0,c(b')]\times\{b''\}\subset\cC_{a,b}$. Then, substituting the estimates above back into \eqref{eq:C1-0} we have
\begin{align}\label{eq:C1-1}
&v(t,x\!+\!\eps)-v(t,x)\\
&\ge \E\left[\e^{-\int_0^{\tau_*\wedge\rho_\eps}r(X^x_s)\ud s}\Delta^\eps X_{\tau_*\wedge\rho_\eps}\Big(\ind_{\{\tau_*\le \rho_\eps\}}\inf_{\xi\in[0,\Delta^\eps X_{\tau_*}]}g'(X^x_{\tau_*}+\xi)- \ind_{\{\tau_*> \rho_\eps\}}C \Big)\right]\notag\\
&\quad-\bar v\,\bar r\,\E\left[\int_0^{\tau_*\wedge\rho_\eps}\Delta^\eps X_s\ud s\right].\notag
\end{align}

Thanks to \eqref{eq:Pas} and due to the local nature of the argument we are using, we may substitute $X$ with $\widetilde X$ in all our calculations above. Therefore, there is no loss of generality assuming that $x\mapsto X^x$ is continuously differentiable in all the expressions above (since $x\mapsto \widetilde X^x$ is such by, e.g., \cite[Ch. V.7]{Protter}) and moreover the process $t\mapsto \partial_x X^x_t$ evolves according to 
\[
\partial_x X^x_t=1+\int_0^t\partial_x\widetilde\sigma(X^x_s)\partial_x X^x_s\ud B_s.
\]
In particular, $(t,x)\mapsto \partial_x X^x$ admits a continuous modifications (which we use in the rest of the proof) and 
\[
\partial_x X^x_t=\exp\left(\int_0^t\partial_x\widetilde\sigma(X^x_s)\ud B_s-\tfrac{1}{2}\int_0^t\big[\partial_x\widetilde\sigma(X^x_s)\big]^2\ud s\right).
\]
Thanks to the arbitrariness of $\widetilde \sigma$ and the explicit formula for $\partial_x X^x_t$ we can also assume with no loss of generality that
\begin{align}\label{eq:supDX}
\E[Z]:=\E\left[\sup_{x\in[a,b]}\sup_{t\in[0,T]}\partial_x X^x_t\right]<\infty.
\end{align}

Dividing both sides of \eqref{eq:C1-1} by $\eps$ and rewriting 
\[
\ind_{\{\tau_*\le \rho_\eps\}}\inf_{\xi\in[0,\Delta^\eps X_{\tau_*}]}g'(X^x_{\tau_*}+\xi)=\inf_{\xi\in[0,\Delta^\eps X_{\tau_*\wedge\rho_\eps}]}g'(X^x_{\tau_*\wedge\rho_\eps}+\xi)\big(1-\ind_{\{\tau_*> \rho_\eps\}}\big)
\]
we obtain 
\begin{align*}
&\frac{v(t,x\!+\!\eps)-v(t,x)}{\eps}\\
&\ge \E\left[\e^{-\int_0^{\tau_*\wedge\rho_\eps}r(X^x_s)\ud s}\Big(\inf_{\xi\in[0,\Delta^\eps X_{\tau_*\wedge\rho_\eps}]}g'(X^x_{\tau_*\wedge\rho_\eps}+\xi)- \ind_{\{\tau_*> \rho_\eps\}}C'\Big)\frac{1}{\eps}\int_0^\eps \partial_x X^{x+\zeta}_{\tau_*\wedge\rho_\eps}\ud \zeta\right]\notag\\
&\quad-\bar v\,\bar r\,\E\left[\int_0^{\tau_*\wedge\rho_\eps}\Big(\frac{1}{\eps}\int_0^\eps \partial_x X^{x+\zeta}_s\ud \zeta\Big)\ud s\right],
\end{align*}
where $C'=C+\sup_{x\in[a,b]}|g'(x)|$.
By time-change arguments as in step 2 of the proof of Proposition \ref{prop:reg-b} we can reduce $X$ to a Brownian motion and then apply Lemma \ref{lem:contBM} to obtain convergence in probability of $\rho_\eps$ to $\tau^x_0$ (analogously to \eqref{eq:sigma-cont}). Then, taking limits as $\eps_n\to 0$, along a suitable sequence $(\eps_n)$, we have $\rho_{\eps_n}\to\tau^x_0$, $\P$-a.s. Moreover,
\[
\Delta^{\eps_n} X_{\tau}\to 0\quad\text{and}\quad\frac{1}{\eps_n}\int_0^{\eps_n}\partial_x X^{x+\zeta}_{\tau}\ud \zeta\to \partial_x X^{x}_{\tau},\quad\text{$\P$-a.s., as $\eps_n\to 0$,}
\]
for any stopping time $\tau\in[0,T]$. Since $x\mapsto c(x)$ is strictly increasing on $(a_*,b)$ and $X^{x+\eps'}_\cdot\le X^{x+\eps}_\cdot$ for any $\eps'<\eps$, we have $\tau_*\wedge\rho_{\eps_n}\uparrow \tau_*\wedge\tau_0$ as $\eps_n\to 0$ so that $\{\tau_*> \rho_{\eps_n}\}\subset\{\tau_*> \rho_{\eps_0}\}$ for all $\eps_n\in(0,\eps_0]$ and some $\eps_0>0$ (recall that $\tau_*=\tau_*^{t,x}$ and therefore, prior to being absorbed, the process $(t+s\wedge\tau_*,X^{x+\eps}_{s\wedge\tau_*})$ can only leave the rectangle $(\hat c,T)\times(a',b')$ by either hitting $(\hat c,T)\times\{b'\}$ or $\{T\}\times(a_*,b')$).

Then, by continuity of $g'$ on $[a,b]$ and dominated convergence (recall \eqref{eq:supDX}) we obtain the lower bound
\begin{align}\label{eq:lb}
\partial_x v(t,x)\ge& \E\left[\e^{-\int_0^{\tau_*\wedge\tau_0}r(X^x_s)\ud s}\Big(g'(X^x_{\tau_*\wedge\tau_0})-\ind_{\{\tau_*> \rho_{\eps_0}\}}C'\Big)\partial_x X^x_{\tau_*\wedge\tau_0}\right]\\
&-\bar v\,\bar r\,\E\left[\int_0^{\tau_*\wedge\tau_0}\partial_x X^x_s\ud s\right].\notag
\end{align}

For the upper bound, starting from \eqref{eq:C1-0} we have 
\begin{align*}
&\E\left[\left(\e^{-\int_0^{\tau_*\wedge\rho_\eps}r(X^{x+\eps}_s)\ud s}-\e^{-\int_0^{\tau_*\wedge\rho_\eps}r(X^{x}_s)\ud s}\right)v(t\!+\!\tau_*\wedge\rho_\eps,X^{x+\eps}_{\tau_*\wedge\rho_\eps})\right]\\
&\le \bar v\,\bar r\,\E\left[\int_0^{\tau_*\wedge\rho_\eps}\Delta^\eps X_s\ud s\right],\notag
\end{align*}
by the same argument as in \eqref{eq:intb0}. For the second term in \eqref{eq:C1-0} we have

\begin{align}\label{eq:C1-2}
&\E\left[\e^{-\int_0^{\tau_*\wedge\rho_\eps}r(X^{x}_s)\ud s}\Big(v(t\!+\!\tau_*\wedge\rho_\eps,X^{x+\eps}_{\tau_*\wedge\rho_\eps})-v(t\!+\!\tau_*\wedge\rho_\eps,X^x_{\tau_*\wedge\rho_\eps})\Big)\right]\\
& =\E\left[\ind_{\{\tau_*\le \rho_\eps\}}\e^{-\int_0^{\tau_*\wedge\rho_\eps}r(X^{x}_s)\ud s}\Big(v(t+\tau_*,X^{x+\eps}_{\tau_*})-g(X^x_{\tau_*})\Big)\right]\notag\\
&\quad+\E\left[\ind_{\{\tau_*> \rho_\eps\}}\e^{-\int_0^{\tau_*\wedge\rho_\eps}r(X^{x}_s)\ud s}\Big(v(t\!+\!\rho_\eps,X^{x+\eps}_{\rho_\eps})-v(t\!+\!\rho_\eps,X^x_{\rho_\eps})\Big)\right].\notag
\end{align}
The second term on the right-hand side can be treated with the same estimates as in \eqref{eq:boundC} and gives
\begin{align*}
&\E\left[\ind_{\{\tau_*> \rho_\eps\}}\e^{-\int_0^{\tau_*\wedge\rho_\eps}r(X^x_s)\ud s}\Big(v(t\!+\!\rho_\eps,X^{x+\eps}_{\rho_\eps})-v(t\!+\!\rho_\eps,X^x_{\rho_\eps})\Big)\right]\\
&\le C\,\E\left[\ind_{\{\tau_*> \rho_\eps\}}\e^{-\int_0^{\tau_*\wedge\rho_\eps}r(X^x_s)\ud s}\Delta^\eps X_{\tau_*\wedge\rho_\eps}\right].
\end{align*}
For the remaining term in \eqref{eq:C1-2} we notice that, on the event $\{\tau_*\le \rho_\eps\}$, strict monotonicity of the boundary implies $t+\tau_*\le c(b')<c(b'')\le c(b)-\delta$,  $\P$-a.s., with $b''\in(b',b)$. Therefore, arguing as in \eqref{eq:boundC} and using that $(t+\tau_*,X^{x+\eps}_{\tau_*}+n^{-1})\in\cC_{a,b}$ on the event $\{\tau_*\le \rho_\eps\}$ for all sufficiently large $n$'s, we get
\begin{align*}
&v(t+\tau_*,X^{x+\eps}_{\tau_*})\\
&=\lim_{n\to\infty}v(t+\tau_*,X^{x}_{\tau_*}+n^{-1}+\Delta^\eps X_{\tau_*})\\
&=\lim_{n\to\infty}\Big[v(t\!+\!\tau_*,X^{x}_{\tau_*}\!+\!n^{-1})+\int^{\Delta^\eps X_{\tau_*}}_{0}\partial_x v(t+\tau_*,X^x_{\tau_*}+n^{-1}+z)\ud z\Big]\\
&=\lim_{n\to\infty}\Big[v(t\!+\!\tau_*,X^{x}_{\tau_*}\!+\!n^{-1})\\
&\qquad\qquad+\!\int^{\Delta^\eps X_{\tau_*}}_{0}\!\!\Big(\partial_x v(t+\tau_*,b'')-\!\int^{b''}_{X^x_{\tau_*}+n^{-1}+z}\!\partial_{xx}v(t+\tau_*,\zeta)\ud\zeta\Big)\ud z\Big]\\
&= g(X^x_{\tau_*})+\int^{\Delta^\eps X_{\tau_*}}_{0}\Big(\partial_x v(t+\tau_*,b'')-\!\!\int^{b''}_{X^x_{\tau_*}+z}\!\partial_{xx}v(t+\tau_*,\zeta)\ud\zeta\Big)\ud z,
\end{align*}
where we are also using \eqref{eq:vxx} to justify the limit of the double integral. Combining the above we obtain
\begin{align*}
&\frac{v(t,x\!+\!\eps)-v(t,x)}{\eps}\\
&\le C\,\E\left[\ind_{\{\tau_*> \rho_\eps\}}\e^{-\int_0^{\tau_*\wedge\rho_\eps}r(X^x_s)\ud s}\Big(\frac{1}{\eps}\int_0^\eps \partial_x X^{x+\zeta}_{\tau_*\wedge \rho_\eps}\ud \zeta\Big)\right]\\
&\quad+\bar v\,\bar r\,\E\left[\int_0^{\tau_*\wedge\rho_\eps}\Big(\frac{1}{\eps}\int_0^\eps \partial_x X^{x+\zeta}_s\ud \zeta\Big)\ud s\right]\\
&\quad+ \E\bigg[\ind_{\{\tau_*\le \rho_\eps\}}\e^{-\int_0^{\tau_*\wedge\rho_\eps}r(X^x_s)\ud s}\frac{1}{\eps}\int^{\Delta^\eps X_{\tau_*}}_{0}\!\!\Big(\partial_x v(t+\tau_*,b'')-\!\!\int^{b''}_{X^x_{\tau_*}+z}\!\partial_{xx}v(t+\tau_*,\zeta)\ud\zeta\Big)\ud z\bigg].
\end{align*}
We need a slightly more refined estimate for the last term above. In particular, recalling \eqref{eq:boundC} and rearranging the indicator functions we have
\begin{align*}
&\E\bigg[\ind_{\{\tau_*\le \rho_\eps\}}\e^{-\int_0^{\tau_*\wedge\rho_\eps}r(X^x_s)\ud s}\frac{1}{\eps}\int^{\Delta^\eps X_{\tau_*}}_{0}\!\!\Big(\partial_x v(t+\tau_*,b'')-\!\!\int^{b''}_{X^x_{\tau_*}+z}\!\partial_{xx}v(t+\tau_*,\zeta)\ud\zeta\Big)\ud z\bigg]\\
&\le\E\bigg[\e^{-\int_0^{\tau_*\wedge\rho_\eps}r(X^x_s)\ud s}\cdot\\
&\qquad\qquad\cdot\frac{1}{\eps}\int^{\Delta^\eps X_{\tau_*\wedge\rho_\eps}}_{0}\Big(\partial_x v(t+\tau_*\wedge\rho_\eps,b'')-\!\!\int^{b''}_{X^x_{\tau_*\wedge\rho_\eps}+z}\!\partial_{xx}v(t+\tau_*\wedge\rho_\eps,\zeta)\ud\zeta\Big)\ud z\bigg]\\
&\quad+ C \E\left[\ind_{\{\tau_*> \rho_\eps\}}\frac{1}{\eps}\Delta^\eps X_{\tau_*\wedge\rho_\eps}\right]
\end{align*}

As in \eqref{eq:lb}, we take limits as $\eps_n\to 0$ along the same subsequence $(\eps_n)$. In order to use dominated convergence we recall \eqref{eq:supDX}. Moreover, we notice that
\begin{equation}\label{eq:cc}
\begin{split}
&(s,z)\mapsto \int^{b''}_{X^x_{\tau_*\wedge\tau_0\wedge s}+z}\!\partial_{xx}v(t+\tau_*\wedge\tau_0\wedge s,\zeta)\ud\zeta\\
&\text{and}\quad s\mapsto \partial_x v(t+\tau_*\wedge\tau_0\wedge s,b'')
\end{split}
\end{equation}
are $\P$-a.s.\ bounded and continuous thanks to \eqref{eq:vxx} and since $t+\tau_*\wedge\tau_0\le c(b')\le c(b'')-\delta$ for some $\delta>0$, by strict monotonicity of $c$, so that $[0,c(b')]\times \{b''\}\subset \cC_{a,b}$. Finally, recalling that $\tau_*\wedge\rho_{\eps_n}\uparrow \tau_*\wedge\tau_0$ and $\{\tau_*>\rho_{\eps_n}\}\subset \{\tau_*>\rho_{\eps_0}\}$ for a fixed $\eps_0>0$ and all $\eps_n\in(0,\eps_0]$, we find the upper bound
\begin{align}\label{eq:ub}
&\partial_x v(t,x)\\
&\le 2C\,\E\left[\ind_{\{\tau_*> \rho_{\eps_0}\}}\partial_x X^x_{\tau_*\wedge \tau_0}\right]+\bar v\,\bar r\,\E\left[\int_0^{\tau_*\wedge\tau_0}\partial_x X^x_s\ud s\right]\notag\\
&\quad+ \E\bigg[\e^{-\int_0^{\tau_*\wedge\tau_0}r(X^x_s)\ud s}\partial_x X^x_{\tau_*\wedge\tau_0}\cdot\notag\\
&\qquad\qquad\cdot\bigg(\partial_x v(t+\tau_*\wedge\tau_0,b'')-\int^{b''}_{X^x_{\tau_*\wedge\tau_0}}\!\partial_{xx}v(t+\tau_*\wedge\tau_0,\zeta)\ud\zeta\bigg)\bigg].\notag
\end{align}

It remains to take limits in \eqref{eq:lb} and \eqref{eq:ub} along an arbitrary sequence $(t_n,x_n)_{n\ge 1}\subset \cC\cap\big((\hat c,T)\times[a_*,b)\big)$ that converges to $(t_0,x_0)\in\partial\cC\cap\big((\hat c,T)\times[a_*,b)\big)$. 
Clearly there is no loss of generality in assuming $x_0\in(a',b')$ with $a'<b'$ as above. Arguing by contradiction, assume that there is one such sequence $(t_n,x_n)$ for which 
\begin{align}\label{eq:bad}
\lim_{n\to\infty}\partial_x v(t_n,x_n)\neq g'(x_0).
\end{align}
Thanks to Proposition \ref{prop:reg-b} we can extract a subsequence, which we denote again by $(t_n,x_n)$, such that $\tau^n_*:=\tau^{t_n,x_n}_*\to 0$, $\P$-a.s. By the same arguments (i.e., time-change and Lemma \ref{lem:contBM}) we can also show that $\tau^n_0:=\tau^{x_n}_0\to \tau^{x_0}_0$ and $\rho^n_{\eps_0}=\tau^{x_n}_0\wedge\tau^{x_n+\eps_0}_0\to \tau^{x_0}_0\wedge\tau^{x_0+\eps_0}_0=\rho^{x_0}_{\eps_0}$, $\P$-a.s.\ (possibly selecting further subsequences). Since $\P(\tau^{x_0}_0\wedge\tau^{x_0+\eps_0}_0>0)=1$, recalling \eqref{eq:supDX} we get
\begin{align}\label{eq:l0}
\lim_{n\to\infty}\E\left[\partial_x X^{x_n}_{\tau^n_*\wedge\rho^n_{\eps_0}}\ind_{\{\tau^n_*> \rho^n_{\eps_0}\}}\right]\le \E\left[Z\lim_{n\to\infty}\ind_{\{\tau^n_*> \rho^n_{\eps_0}\}}\right]=0
\end{align}
and
\begin{align}\label{eq:l1}
\lim_{n\to\infty}\E\left[\int_0^{\tau^n_*\wedge\tau^n_0}\partial_x X^{x_n}_s\ud s\right]\le\E\left[Z\lim_{n\to\infty}(\tau^n_*\wedge\tau^n_0)\right] =0
\end{align}
by dominated convergence. Moreover, $\partial_x X^{x_n}_{\tau^n_*\wedge\tau^n_0}\to \partial_x X^{x_0}_0=1$ and $\partial_x v$ is continuous at $(t_0,b'')\in\cC_{a,b}$ (recall $b'<b''<b$ so that $c(b')<c(b'')<c(b)$). Then, by dominated convergence and the continuity arguments as in \eqref{eq:cc} we also get
\begin{align}\label{eq:l2}
\lim_{n\to\infty}&\,\E\bigg[\e^{-\int_0^{\tau^n_*\wedge\tau^n_0}r(X^{x_n}_s)\ud s}\partial_x X^{x_n}_{\tau^n_*\wedge\tau^n_0}\cdot\\
&\qquad\cdot\bigg(\partial_x v(t_n+\tau^n_*\wedge\tau^n_0,b'')-\int^{b''}_{X^{x_n}_{\tau^n_*\wedge\tau^n_0}}\!\partial_{xx}v(t_n+\tau^n_*\wedge\tau^n_0,\zeta)\ud\zeta\bigg)\bigg]\notag\\
=&\,\partial_x v(t_0,b'')-\int^{b''}_{x_0}\!\partial_{xx}v(t_0,\zeta)\ud\zeta=\partial_x v(t_0,x_0+),\notag
\end{align}
where the final equality holds and the right-limit $\partial_x v(t_0,x_0+)$ exists because 
$\zeta\mapsto\partial_{xx}v(t_0,\zeta)$ is continuous on $[x_0,b'']$ (see \eqref{eq:vxx}). Finally, we also have
\begin{align}\label{eq:l3}
\lim_{n\to\infty}\E\left[\e^{-\int_0^{\tau^n_*\wedge\tau^n_0}r(X^{x_n}_s)\ud s}g'(X^{x_n}_{\tau^n_*\wedge \tau^n_0})\partial_x X^{x_n}_{\tau^n_*\wedge \tau^n_0}\right]=g'(x_0)
\end{align}
by dominated convergence and continuity of $g'$ at $x_0$. We claim that $\partial_x v(t_0,x_0+)=g'(x_0)$ so that combining the limits \eqref{eq:l0}--\eqref{eq:l3} with \eqref{eq:lb} and \eqref{eq:ub} we obtain
\[
g'(x_0)\le \lim_{n\to\infty}\partial_{x}v(t_n,x_n)\le \partial_x v(t_0,x_0+)=g'(x_0).
\]
That contradicts \eqref{eq:bad} since the limit must be the same along any subsequence.

It remains to justify that $\partial_x v(t_0,x_0+)=g'(x_0)$. From the first two inequalities above we have $\partial_x v(t_0,x_0+)\ge g'(x_0)$ so, arguing by contradiction, we assume $\theta:=\partial_x v(t_0,x_0+)- g'(x_0)>0$. Notice that $x_0=c^{-1}(t_0)$,  with $c^{-1}$ the continuous inverse of $c(\,\cdot\,)$ on $[a_*,b)$, and the mapping $x\mapsto\partial_{xx} v(t_0, x)$ is continuous in $[x_0,b)$. We must consider separately the case $x_0>a_*$ and $x_0=a_*$.

If $x_0>a_*$, by the strict monotonicity of the boundary we can also assume with no loss of generality that $v(t_0,x)=g(x)$ and $\partial_{xx}v(t_0,\ud x)=g''(\ud x)$ for $x\in(a_*,x_0)$ (notice that $g''(\ud x)$ is a continuous measure since $g\in C^1([a,b])$).
Therefore $\partial_{xx}v(t_0,\,\cdot\,)$ defines a signed measure on $(a_*,b)$ with a single atom at $x_0$. That is
\begin{align}\label{eq:vxx-meas}
\partial_{xx}v(t_0,\ud x)=\ind_{(a_*,x_0)}(x)g''(\ud x)+\theta\,\delta_{x_0}(\ud x)+\ind_{(x_0,b)}(x)\partial_{xx}v(t_0,x)\ud x.
\end{align} 
Setting $\zeta_0=\inf\{s\ge 0: X_s\notin (a_*,b)\}$, by the super-harmonic property of the value function we have 
\begin{align*}
v(t_0,x_0)\ge& \E_{x_0}\left[\e^{-\int_0^{s\wedge\zeta_0}r(X_u)\ud u}v(t_0+s\wedge\zeta_0,X_{s\wedge\zeta_0})\right]\\
\ge& -\kappa\, s+\E_{x_0}\left[\e^{-\int_0^{s\wedge\zeta_0}r(X_u)\ud u}v(t_0,X_{s\wedge\zeta_0})\right],
\end{align*}
where we used that for $x\in[a,b]$ and $s\le s_0<T-t$ there is a positive constant $\kappa=\kappa(s_0,a,b)>0$ such that $v(t+s,x)-v(t,x)\ge -\kappa s$, thanks to Assumption \ref{ass:const} (which guarantees Theorem \ref{thm:C1-time}).
Now we can use It\^o-Tanaka-Meyer formula and \eqref{eq:vxx-meas} to rewrite the term under expectation and obtain
\begin{align}\label{eq:meyer}
0\ge& \E_{x_0}\left[\int_{(a_*,x_0)}\!\!\ell^z_{s\wedge\zeta_0}\mu(\ud z)\!+\!\int_{(x_0,b)}\!\!\ell^z_{s\wedge\zeta_0}\Big(\partial_{xx}v(t_0,z)\!-\!2\sigma^{-2}(z)r(z)v(t_0,z)\Big)\ud z\right]\\
&+\theta\,\E_{x_0}[ \ell^{x_0}_{s\wedge\zeta_0}]-2\kappa\,s\notag\\
\ge &\E_{x_0}\left[\int_{(a_*,x_0)}\ell^z_{s\wedge\zeta_0}\mu(\ud z)+\theta\, \ell^{x_0}_{s\wedge\zeta_0}\right]-2\kappa\,s\notag
\end{align}
where the second inequality uses that $\partial_{xx}v-2\sigma^{-2}r v=-2\sigma^{-2}\partial_tv\ge 0$ in $\cC_{a,b}$. Since $g\in C^1(a,b)$ and $(a,b)\subset\Lambda_-$, the measure $\mu(\ud x)$ is continuous and negative on $(a,b)$. Then, the same estimates as in the proof of Lemma \ref{lem:tech} (or Proposition \ref{prop:bay}) allow us to conclude that for $s>0$ sufficiently small we reach the contradiction 
\[
0\ge\E_{x_0}\left[\int_{(a,x_0)}\ell^z_{s\wedge\zeta_0}\mu(\ud z)+\theta\, \ell^{x_0}_{s\wedge\zeta_0}\right]-2\kappa\,s>0.
\]
Hence $\partial_{x}v(t_0,x_0+)-g'(x_0)=0$ as claimed.

If $x_0=a_*$, either there is $\delta>0$ such that $\{t_0\}\times(a_*-\delta,a_*]\in\cS$ or $\lim_{x\uparrow a_*}c(x)>t_0$ (recall that $x\mapsto c(x)$ is strictly decreasing on $(a,a_*]$). In the former case, we can repeat the same arguments as for the case $x_0>a_*$ but considering the interval $(a_*-\delta,b)$ and the stopping time $\zeta_0=\inf\{s\ge 0: X_s\notin (a_*-\delta,b)\}$ in \eqref{eq:meyer}. In the latter case instead $\{t_0\}\times\big[(a,a_*)\cup(a_*,b)\big] \in\cC$. Then $x\mapsto \partial_{xx}v(t_0,x)$ is continuous (hence bounded) on $[a',a_*]\cup[a_*,b']$ by \eqref{eq:vxx}, with a single jump at $x=a_*=x_0$. Since $v(t_0,x_0)=g(x_0)$, $\partial_x v(t_0,x_0-)\le g'(x_0)$ and we are assuming $\partial_x v(t_0,x_0+)- g'(x_0)>0$, then $\theta':=\partial_x v(t_0,x_0+)-\partial_x v(t_0,x_0-)>0$. We can define the signed measure
\begin{align}\label{eq:vxx-meas2}
\partial_{xx}v(t_0,\ud x)=\theta'\,\delta_{x_0}(\ud x)+\ind_{\{x\neq x_0\}}\partial_{xx}v(t_0,x)\ud x,\quad\text{for $x\in(a,b)$}
\end{align} 
and argue in a similar way as in \eqref{eq:meyer}. We then obtain the contradiction
\[
0\ge\E_{x_0}\left[\int_{(a',b')\setminus\{x_0\}}\!\!\ell^z_{s\wedge\tau_0}\Big(\partial_{xx}v(t_0,z)\!-\!2\sigma^{-2}(z)r(z)v(t_0,z)\Big)\ud z+\theta'\ell^{x_0}_{s\wedge\tau_0}\right]\!-\!2\kappa\,s>0,
\]
with $\tau_0$ as in \eqref{eq:tau0}. Hence $\partial_{x}v(t_0,x_0+)=\partial_{x}v(t_0,x_0-)=g'(x_0)$ as claimed.
\hfill$\square$

\end{document}